\newcommand{\Tableau}[2][sY]{{\text{\tableau[#1]{#2}}}}
\theoremstyle{plane}
\newtheorem{theorem}{Theorem}[section]
\newtheorem{lemma}[theorem]{Lemma}
 \newtheorem{corollary}[theorem]{Corollary}
      \newtheorem{proposition}[theorem]{Proposition}
        \newtheorem{theorem*}{Theorem}
    \newcounter{relctr} 
\everydisplay\expandafter{\the\everydisplay\setcounter{relctr}{0}} 
\newcommand\labelrel[2]{%
  \begingroup
    \refstepcounter{relctr}%
    \stackrel{\textnormal{(\alph{relctr})}}{\mathstrut{#1}}%
    \originallabel{#2}%
  \endgroup
}
\theoremstyle{definition}
\newtheorem*{definition*}{Definition}
 \newtheorem{example}[theorem]{Example}
\theoremstyle{remark}
\newtheorem{remark}[theorem]{Remark}
\newtheorem*{acknowledgments}{Acknowledgments}
\numberwithin{equation}{section}
\begin{document}
\title{Mather Classes of Schubert Varieties via Small Resolutions}
\author{Minyoung Jeon}
\address{Department of mathematics, The Ohio State University, Columbus OH 43210, USA}
\email{jeon.163@buckeyemail.osu.edu}
\subjclass[2020]{Primary 14C17,14M15 ; Secondary 32S60} 
\keywords{Chern-Mather classes, Kazhdan-Lusztig classes, Schubert varieties}

%
\begin{abstract}
We express a Schubert expansion of the Chern-Mather class for Schubert varieties in the even orthogonal Grassmannian via integrals involving Pfaffians and pushforward of the small resolutions in the sense of Intersection Cohomology (IH) constructed by Sankaran and Vanchinathan, instead of the Nash blowup. The equivariant localization is employed to show the way of computing the integral. As a byproduct, we present the computations. For analogy and the completion of the method in ordinary Grassmannians, we also suggest Kazhdan-Lusztig classes associated to Schubert varieties in the Lagrangian and odd orthogonal Grassmannian. 
\end{abstract}
\maketitle
\setcounter{tocdepth}{2}
\tableofcontents

\section{Introduction}
The {\it Chern-Mather class}, defined by MacPherson \cite{M}, is one of the characteristic classes of singular varieties, along with the Chern-Schwartz-MacPherson class, the Fulton class and the Fulton-Johnson
class. These characteristic classes are significant in classical algebraic geometry, since they generalize the Chern class $c(TX)$ of a nonsingular variety $X$. For an irreducible, quasi-projective complex (possibly singular) variety $X$ embedded in a nonsingular variety $Y$, the Mather class $c_M(X)$ of $X$ is an element in the Chow group (or homology) $A_*(Y)$ and defined through the Nash blowup of $X$.

We consider Schubert varieties $\mathbb{S}(\underline{\alpha})$, which in most cases are singular varieties. In the case of the ordinary Grassmannians, so-called of Lie type A, Jones \cite{Benjamin} expressed the Chern-Mather classes of Schubert varieties by integrations over Zelevinsky's IH-small resolutions \cite{Z} (small resolutions in the sense of Intersection Cohomology), without the Nash blowup and computed the Mather classes by the use of equivariant localization. The method relies on the irreducibility of the characteristic cycle $CC(IC_{\mathbb{S}(\underline{\alpha})})$ associated to $\mathbb{S}(\underline{\alpha})$ in simply laced Lie types. 


 Sankaran and Vanchinathan \cite{SV} constructed IH-small resolutions of Bott-Samelson type
for Grassmannian Schubert varieties in types D and C. Our goal of this paper is to express the coefficients of the Schubert expansion for the Chern-Mather classes of Schubert varieties in even orthogonal Grassmannians $OG(n,\mathbb{C}^{2n})$ of Lie type D, as in the category of simply laced types, in terms of integrals involving Pfaffians along Sankaran and Vanchinathan's IH-small resolutions. When it comes to types B and C, the expressions we found from IH-small resolutions for Schubert varieties are for the {\it Kazhdan-Lusztig classes} investigated by Aluffi, Mihalcea, Schuermann and Su \cite{AMSS, AMSS20} as well as Mihalcea and Singh \cite{MS}. Essentially, they turn out that Jones' outcomes for the Chern-Mather classes coincide with the {Kazhdan-Lusztig classes} \cite[Page 15]{MS}. Since the Kazhdan-Lusztig class is defined regardless of the irreducibility of characteristic cycles, we further examine the Kazhdan-Lusztig classes of Schubert varieties in Lagrangian Grassmannians $LG(n,\mathbb{C}^{2n})$ of type C, and in the odd orthogonal Grassmannians $OG(n,\mathbb{C}^{2n+1})$ of type B, aiming to complete the direction of Zelevinsky's IH-small resolutions by Jones for classical Lie types.

Our main result describes the Chern-Mather classes of Sankaran and Vanchinathan's IH-small resolutions for Schubert varieties in the even orthogonal Grassmannians (type D), Lagrangian Grassmannians (type C), and the odd orthogonal Grassmannians (type B). Since the Chern-Mather class of a non-singular variety is the same as the total Chern class of its tangent bundle, we present the total Chern classes of them explicitly, using the universal subbundles as follows.

\begin{theorem}[Total Chern class of the IH-small resolutions]\label{Mainthm}
Let $Z_{\underline{\alpha}}\rightarrow \mathbb{S}(\underline{\alpha})$ be a IH-small resolution of a Schubert variety $\mathbb{S}(\underline{\alpha})$ in types D, C and B. Then the total Chern class of $Z_{\underline{\alpha}}$ is
\begin{enumerate}[label=(\roman*)]
\item (Type D) 
$c(TZ_{\underline{\alpha}})=\left(\prod_{i=1}^{d}c((\underline{U}_i/\underline{W}_i^L)^\vee\otimes(\underline{W}_i^R/\underline{U}_i))\cdot c\left(\mathrm{\wedge}^2(\underline{U}_{d+1}/\underline{W}_{d+1}^L)^\vee\right)\right)$
\item (Type C)
 $c(TZ_{\underline{\alpha}})=\left(\prod_{i=1}^{d}c((\underline{U}_i/\underline{W}_i^L)^\vee\otimes(\underline{W}_i^R/\underline{U}_i))\cdot c\left(\mathrm{Sym}^2(\underline{U}_{d+1}/\underline{W}_{d+1}^L)^\vee\right)\right)$
\item (Type B)
$c(TZ_{\underline{\alpha}})=\left(\prod_{i=1}^{d} c((\underline{U}_i/\underline{W}_i^L)^\vee\otimes(\underline{W}_i^R/\underline{U}_i))\cdot c((\underline{U}_{d+1}/\underline{W}_{d+1}^L)^\vee\otimes(\underline{U}_{d+1}^\perp/\underline{U}_{d+1}) )\right.$
  \begin{align*}
&\quad \left.\cdot c\left(\mathrm{\wedge}^2(\underline{U}_{d+1}/\underline{W}_{d+1}^L)^\vee\right)\right).
\end{align*}
\end{enumerate}

\end{theorem}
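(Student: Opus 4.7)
The plan is to exploit smoothness of $Z_{\underline{\alpha}}$ (as a resolution of singularities) together with the Sankaran--Vanchinathan tower description. For a smooth variety $Z$ the Chern--Mather class coincides with the Chern class of the tangent bundle, $c_M(Z)=c(TZ)\cap[Z]$, so the task reduces to computing $c(TZ_{\underline{\alpha}})$ explicitly. No Nash blowup, no characteristic cycle, and no smallness of the map is needed at this stage; smallness will only enter later when pushing forward.

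First I would recall the Bott--Samelson style tower
\[
Z_{\underline{\alpha}}=Z_{d+1}\xrightarrow{p_{d+1}} Z_{d}\xrightarrow{p_d}\cdots\xrightarrow{p_1} Z_0=\{\mathrm{pt}\},
\]
in which, for $i=1,\ldots,d$, the map $p_i$ is a Grassmann bundle whose fiber over a point of $Z_{i-1}$ parametrizes subspaces $\underline{U}_i$ with $\underline{W}_i^L\subset \underline{U}_i\subset \underline{W}_i^R$, while the final map $p_{d+1}$ is the corresponding isotropic Grassmann bundle whose fibers are, respectively, a connected component of $OG$ in type D, of $LG$ in type C, or of the odd orthogonal Grassmannian in type B, parametrizing an isotropic $\underline{U}_{d+1}$ containing $\underline{W}_{d+1}^L$.

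Next I would iterate the relative tangent sequence $0\to T_{p_i}\to TZ_i\to p_i^{*}TZ_{i-1}\to 0$ to obtain the multiplicative factorization
\[
c(TZ_{\underline{\alpha}})=\prod_{i=1}^{d+1}c(T_{p_i}).
\]
For $i\leq d$ the relative tangent bundle of a Grassmann bundle is the standard $T_{p_i}=(\underline{U}_i/\underline{W}_i^L)^{\vee}\otimes(\underline{W}_i^R/\underline{U}_i)$, accounting for the first product appearing in (i)--(iii). For the last step the fiber tangent space at an isotropic $U\supset \underline{W}^L$ is computed from infinitesimal deformation theory: modulo the fixed flag $\underline{W}^L$, a first-order deformation is a linear map out of $U/\underline{W}^L$ whose image must keep $U$ isotropic with respect to the induced form. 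This yields $\wedge^{2}(\underline{U}_{d+1}/\underline{W}_{d+1}^L)^{\vee}$ in type D (skew forms on the quotient of a symmetric form), $\mathrm{Sym}^{2}(\underline{U}_{d+1}/\underline{W}_{d+1}^L)^{\vee}$ in type C (symmetric forms for a symplectic form), and in type B the direct sum $(\underline{U}_{d+1}/\underline{W}_{d+1}^L)^{\vee}\otimes(\underline{U}_{d+1}^{\perp}/\underline{U}_{d+1})\oplus\wedge^{2}(\underline{U}_{d+1}/\underline{W}_{d+1}^L)^{\vee}$, the first summand coming from motion of $\underline{U}_{d+1}$ inside its orthogonal in the odd-dimensional ambient space and the second from the remaining skew-form deformations. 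Whitney multiplicativity along the tower then assembles the three stated expressions.

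The main obstacle is pinning down Sankaran--Vanchinathan's tower precisely enough---i.e.\ identifying the tautological flags $\underline{W}_i^L\subset \underline{W}_i^R$ on each $Z_{i-1}$ and verifying that each intermediate step is genuinely the claimed Grassmann, respectively isotropic Grassmann, bundle, with the final-step form on the fiber inherited from the ambient bilinear or symplectic form restricted modulo $\underline{W}_{d+1}^L$. Once that bookkeeping is set up, the Chern class of each relative tangent bundle is standard (Fulton's formula for Grassmann bundles for $i\leq d$, and the infinitesimal-deformation calculation above for the isotropic last step), and Theorem~\ref{Mainthm} follows.
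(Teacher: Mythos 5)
Your proposal is correct and follows essentially the same route as the paper: since $Z_{\underline{\alpha}}$ is smooth one has $c_M(Z_{\underline{\alpha}})=c(TZ_{\underline{\alpha}})\cap[Z_{\underline{\alpha}}]$, and the paper likewise computes $c(TZ_{\underline{\alpha}})$ by viewing the Sankaran--Vanchinathan resolution as an iterated Grassmann-bundle tower (ordinary Grassmann bundles for $i\le d$, an isotropic/orthogonal Grassmann bundle at the last step), using Whitney multiplicativity of the relative tangent sequences and the split exact sequence $0\to \underline{\mathscr{S}}^\vee\otimes\underline{\mathscr{S}}^\perp/\underline{\mathscr{S}}\to T\to \wedge^2\underline{\mathscr{S}}^\vee\ (\text{resp.}\ \mathrm{Sym}^2\underline{\mathscr{S}}^\vee)\to 0$ for the final fiber, with the $\perp$-quotient term trivial in type D and a line bundle in type B. The bookkeeping you flag as the ``main obstacle'' is exactly what the paper's Proposition~\ref{D:prop} supplies.
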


The above theorem is analogous to the total Chern classes of the resolutions $Z_{\underline{\alpha}}$ over Schubert varieties in ordinary Grassmannians $Gr(k,\mathbb{C}^n)$ of $k$-dimensional subspaces of a $n$-dimensional vector space over $\mathbb{C}$ by Jones \cite[Theorem 1.2.2]{Benjamin} as 
\begin{enumerate}[label=, align = left, leftmargin=5pt]
    \item \leavevmode\vspace{-\dimexpr\baselineskip + \topsep + \parsep}%
   \quad \quad\quad   {\it (Type A)}$\quad\quad\quad c_{M}(Z_{\underline{\alpha}})=\left(\prod_{i=1}^{d}c((\underline{U}_i/\underline{W}_i^L)^\vee\otimes(\underline{W}_i^R/\underline{U}_i))\right).\vspace{-2mm}$\\
   \\
\end{enumerate}
Because of the isomorphisms of the odd orthogonal Grassmannians $OG(n,\mathbb{C}^{2n+1})$ for type B and even orthogonal Grassmannians $OG'(n+1,\mathbb{C}^{2n+2})$ (or $OG''(n+1,\mathbb{C}^{2n+2}))$ for type D, the IH-small resolutions of Schubert varieties can be identified with the ones in type D. The isomorphisms allow us to be able to interpret any statements in type D as in type B. We refer reader to later sections (\S\ref{s4.1},\S\ref{s5.1},\S\ref{s5.2}) for undefined notations in Theorem \ref{Mainthm}.


The Kazhdan-Lusztig classes of Schubert varieties in isotropic or orthogonal Grassmannians can be signified as the pushforward of the total Chern classes $c(TZ_{\underline{\alpha}})$ of the tangent bundles of any IH-small resolutions $Z_{\underline{\alpha}}$, parallel to the Chern-Mather classes as the pushforward of $c(TZ_{\underline{\alpha}})$ \cite{Benjamin, AMSS20}. Since there is no explicit computation for the pushforward of the Chern classes of the tangent bundles of the IH-small resolutions of singularity except type A, we offer how to calculate them for the other classical types. 

The localization theorem for equivariant Chow groups \cite{B} is employed to compute the pushforward to the corresponding ambient Grassmannians of Schubert varieties. 
We adapt the work by Pragacz \citelist{\cite{P1}\cite{P2}} who showed {\it Pfaffian} formulas for the (co)homology classes of Schubert varieties in Grassmannians of isotropic subspaces of a vector space equipped with a nondegenerate quadratic or symmetric form, commonly known as Schur $\widetilde{P}$ or $\widetilde{Q}$ functions in algebraic combinatorics, to find the coefficients $\gamma_{\underline{\alpha}, \underline{\beta}}\in\mathbb{Z}$ of the Schubert classes $\left[\mathbb{S}(\underline{\beta})\right]$ in $\pi_*c_{M}(Z_{\underline{\alpha}})$. Here $\pi:Z_{\underline{\alpha}}\rightarrow \mathbb{S}(\underline{\alpha})$ is the IH-small resolution. 

We obtain the following statements from the Bott Residue formula (Theorem \ref{Bott}). Our formulas reduced to explicit computations of $\mathbb{C}^*$-equivariant Chern classes $c^{\mathbb{C}^*}(\underline{E})$ and the Pfaffians $\widetilde{P}^{\mathbb{C}^*}_\lambda(\underline{E})$ or $\widetilde{Q}^{\mathbb{C}^*}_\lambda(\underline{E})$ for some ${\mathbb{C}^*}$-equivariant vector bundles $\underline{E}$ over a nonsingular variety and partitions $\lambda=(\lambda_1,\ldots,\lambda_s)$. We define $|\lambda|:=\lambda_1+\cdots+\lambda_s$. Let $F$ be any connected components of $Z_{\underline{\alpha}}$ and $\pi_{F*}:A^T_*F\rightarrow R_T$ is the push-forward map induced by the map $\pi_F$ from $F$ to a point where $R_T$ is the $T$-equivariant Chow ring of a point.
\begin{theorem}[Coefficients of Schubert classes]\label{thm:main2}
\begin{enumerate}[label=(\roman*)]
\item[]
\item (Type D and B) Let $Z_{\underline{\alpha}}\rightarrow \mathbb{S}(\underline{\alpha})$ be a IH-small resolution for a Schubert variety $\mathbb{S}(\underline{\alpha})$ in the even orthogonal Grassmannian $OG'(n,\mathbb{C}^{2n})$ (resp. $OG''(n,\mathbb{C}^{2n}))$ or the odd orthogonal Grassmannian $OG(n-1,\mathbb{C}^{2n-1})$. Then the constant $\gamma_{\underline{\alpha}, \underline{\beta}}$ is the integration
 \[
 \gamma_{\underline{\alpha}, \underline{\beta}}=\displaystyle\sum_{F\in Z^{\mathbb{C}^*}_{\underline{\alpha}}}\pi_{F*}\left(\dfrac{c_k^{\mathbb{C}^*}(TZ_{\underline{\alpha}}|_F)\cdot \widetilde{P}^{\mathbb{C}^*}_{\rho(n-1)\backslash\underline{\beta}}(\underline{U}^\vee|_F)\cap \left[F\right]_{\mathbb{C}^*}}{c_{d}^{\mathbb{C}^*}(TZ_{\underline{\alpha}})}\right)
 \]
where $d=\mathrm{dim}(Z_{\underline{\alpha}})$, $k=d-|\rho(n-1)\backslash\underline{\beta}|$, and $\underline{U}$ is the universal tautological subbundle on $OG'(n,\mathbb{C}^{2n})$ (resp. $OG''(n,\mathbb{C}^{2n}))$ or $OG(n-1,\mathbb{C}^{2n-1})$.
\item (Type C) Let $Z_{\underline{\alpha}}\rightarrow \mathbb{S}(\underline{\alpha})$ be a IH-small resolution for a Schubert variety $\mathbb{S}(\underline{\alpha})$ in the Lagrangian Grassmannian $LG(n,\mathbb{C}^{2n})$. The constant $\gamma_{\underline{\alpha}, \underline{\beta}}$ is given by
\[
\gamma_{\underline{\alpha}, \underline{\beta}}=\displaystyle\sum_{F\in Z^{\mathbb{C}^*}_{\underline{\alpha}}}\pi_{F*}\left(\dfrac{c_k^{\mathbb{C}^*}(TZ_{\underline{\alpha}}|_F)\cdot \widetilde{Q}^{\mathbb{C}^*}_{\rho(n)\backslash\underline{\beta}}(\underline{U}^\vee|_F)\cap\left[F\right]_{\mathbb{C}^*}}{c_{d}^{\mathbb{C}^*}(TZ_{\underline{\alpha}})}\right)
\]
where $d=\mathrm{dim}(Z_{\underline{\alpha}})$, $k=d-|\rho(n-1)\backslash\underline{\beta}|$, and $\underline{U}$ is the universal tautological subbundle on $LG(n,\mathbb{C}^{2n})$.
\end{enumerate}
\end{theorem}
%
%
In Theorem \ref{thm:main2}, the Pfaffians $\widetilde{P}^{\mathbb{C}^*}_\lambda(\underline{U})$ or $\widetilde{Q}^{\mathbb{C}^*}_\lambda(\underline{U})$ are the square root of the determinant of a skew-symmetric matrix in $c^{\mathbb{C}^*}(\underline{U})$. The exact definitions of these Pfaffians will be discussed in \S\ref{sec4}, pg. ~\pageref{page:Pfa}-~\pageref{page:Pfa1}, and some useful properties of Pfaffians can be found in \cite[Appendix D]{FP}. The Chern classes $c_i^{\mathbb{C}^*}(TZ_{\underline{\alpha}})$ and Pfaffians $\widetilde{P}^{\mathbb{C}^*}_{\rho(n-1)\backslash\underline{\beta}}(\underline{U}^\vee)$ and $\widetilde{Q}^{\mathbb{C}^*}_{\rho(n)\backslash\underline{\beta}}(\underline{U}^\vee)$ can be computed by formulas in the (equivariant version of) intersection theory, for instance \cite[\S3, A.9]{Fulton} and \cite[Lem. 5.1.4]{Benjamin}. The examples of these computations are included in Sections \ref{ExmD} and \ref{s5.1}. These formulas for the coefficients $\gamma_{\underline{\alpha},\underline{\beta}}$ are analogous to the one for type A in \cite{Benjamin} given by

\begin{enumerate}[label=, align = left, leftmargin=5pt]
    \item \leavevmode\vspace{-\dimexpr\baselineskip + \topsep + \parsep}%
   \quad \quad\quad   {\it (Type A)}$\quad\quad\quad\quad\gamma_{\underline{\alpha}, \underline{\beta}}=\displaystyle\sum_{F\in Z^{\mathbb{C}^*}_{\underline{\alpha}}}\pi_{F*}\left(\dfrac{c_k^{\mathbb{C}^*}(TZ_{\underline{\alpha}}|_F)\cdot s^{\mathbb{C}^*}_{\underline{\beta}^\vee}(\underline{U}^\vee|_F)\cap\left[F\right]_{\mathbb{C}^*}}{c_{d}^{\mathbb{C}^*}(TZ_{\underline{\alpha}})}\right)
\vspace{-2mm}$\\
   \\
\end{enumerate}
where $s^{\mathbb{C}^*}_\lambda(\underline{U}^\vee)$ is the Schur determinant of the $s$ by $s$ matrix whose $(i,j)$ entry is $c_{\lambda_i+j-i}^{\mathbb{C}^*}(\underline{U}^\vee)$,  $d=\mathrm{dim}(Z_{\underline{\alpha}})$, $k=d-|\beta^\vee|$, and $\underline{U}$ is the universal tautological subbundle on $Gr(k,\mathbb{C}^{n})$.

In this manner, we eventually provide general explicit combinatorial recipes calculating the Chern-Mather classes $c_M(\mathbb{S}(\underline{\alpha}))$ of Schubert varieties in the orthogonal Grassmannians, which partially recovers consequences in \cite{MS}, and Kazhdan-Lusztig classes $KL(\mathbb{S}(\underline{\alpha}))$ of Schubert varieties in Lagrangian Grassmannians, in respect of the (homology) class of Schubert varieties $\mathbb{S}(\underline{\beta})\subseteq \mathbb{S}(\underline{\alpha})$ for some sequences $\underline{\alpha}$ and $\underline{\beta}$.

The key ingredient of our proof is the existence of IH-small resolutions for Schubert varieties. N. Perrin \cite{Per} classified all minuscule Schubert varieties that admit IH-small resolutions. It would be interesting to compute the Chern-Mather classes or Kazhdan-Lusztig classes of minuscule Schubert varieties via the small resolutions of Perrin. Beyond minuscule (or cominuscule) Schubert varieties in $G/P$, Larson \cite[Section 4]{L} made IH-small resolutions for Schubert varieties associated to certain Weyl group elements from IH-small resolutions for the other Schubert varieties in $G/B$. It would also be of interest to apply our methods to Larson's resolutions, expanding the computations of Mather classes to special Schubert varieties in $G/B$.

Beside our approach by IH-small resolutions and the advent of Pfaffians for types D, B and C, Mihalcea and Singh studied Mather classes from resolutions for the conormal spaces of cominuscule Schubert varieties in the equivariant setting \cite{MS}. We also refer to \cite{RP,Zhang} for degeneracy loci of several types.

As for the Nash-blowup, Richmond, Slofstra and Woo computed the Nash-blowup of cominuscule Schubert varieties and gave explicit correspondences between the Nash-blowup and the Zelevinsky's IH-small resolutions \cite{RSW}. One may determine the Mather classes from their Nash-blowup of Schubert varieties in all cominuscule homogeneous spaces by the original definition.

%

\begin{acknowledgments}
The author wishes to thank David Anderson for invaluable suggestions and a lot of thorough reading of preliminary versions of this paper. We also wish to express our gratitude to Leonardo Mihalcea for his insightful comments to enhance the accuracy of the original manuscript and sharing their work with the author. MJ was partially supported by NSF CAREER DMS-1945212 from her advisor David Anderson. Lastly, we are very grateful to Xiping Zhang and anonymous referee for the careful reading of this manuscript, helpful suggestions and valuable comments. 
\end{acknowledgments}

\section{Chern-Mather classes and Kazhdan-Lusztig classes}\label{sec2}
 In this section we review some basic facts on Chern-Mather classes of certain complex algebraic varieties and Kazhdan-Lusztig classes of Schubert varieties in $G/P$ taking resolutions into account. Main references for this section are \cite{M} and \cite[\S 2-\S 3]{Benjamin}, but we occasionally use  \cite{GH,BS,MS}.

\subsection{Mather classes by resolution of singularities} \label{sec2.1}
Let $M$ be a smooth algebraic variety over $\mathbb{C}$ and $X$ an irreducible closed subvariety of dimension $n$ in $M$. Let $Gr(n,TM)\rightarrow M$ be the Grassmannian bundle over $M$. The {\it Gauss map} $\mathcal{G}:X\dashrightarrow Gr(n, TM)$ is a rational morphism that assigns a smooth point $x$ to the tangent space $T_xX$ of $X$ at the point $x$. The {\it Nash blowup} $\widetilde{X}$ of $X$ is the closure of the image of $\mathcal{G}$, and the tautological {\it Nash tangent} bundle $\mathcal{T}$ is the restriction of the tautological sub-bundle of $Gr(n, TM)$ to the Nash blowup $\widetilde{X}$.

Provided the Nash blowup $\nu:\widetilde{X}\rightarrow X$, the {\it Chern-Mather} class $c_M(X)$ of $X$ is defined to be 
$$c_M(X):=\nu_*\left(c(\mathcal{T})\cap [ \widetilde{X}]\right)\in A_*(X).$$
If $X$ is smooth, the tautological Nash tangent bundle $\mathcal{T}$ becomes its tangent bundle $TX$ so that the Chern-Mather class $c_M(X)$ is equal to the total homology Chern class of $X$, i.e.,
$$c_M(X)=c(TX)\cap[X].$$

One can use the functoriality of the Chern-Schwartz-MacPherson class along with resolution of singularities to compute the Mather class in the place of the Nash-blowup that does not have the functorial property. We recall the definitions and notions of the local Euler obstruction and the Chern-Schwartz-MacPherson class before we express the Mather class as Chern-Schwartz-MacPherson classes.

Let $\mathcal{X}$ be a proper subvariety of a (quasi-projective) complex variety $\mathcal{Y}$. We denote by $Bl:\mathcal{Y}'\rightarrow \mathcal{Y}$ the blowup of $\mathcal{Y}$ along $\mathcal{X}$ with the exceptional divisor $\mathcal{E}$ of $Bl$. The Segre class $s(\mathcal{X},\mathcal{Y})$ of $\mathcal{X}$ and $\mathcal{Y}$ is given by 
$$s(\mathcal{X},\mathcal{Y})=(Bl|_{\mathcal{E}})_*\sum_{j\geq 1}(-1)^{j-1}\left[\mathcal{E}^j\right]\in A_*(\mathcal{X}),$$
where $\left[\mathcal{E}^{j}\right]:=c_1(\mathcal{O}_{\mathcal{Y}'}(\mathcal{E}))^{j-1}\cap\left[\mathcal{E}\right]$.
Given a fixed point $p$ in $X$, the {\it local Euler Obstruction} of $X$ at $p$ is the number
\begin{equation}\label{eqn:obs}
\mathrm{Eu}_X(p):=\int_{\nu^{-1}(p)}c(\mathcal{T}|_{\nu^{-1}(p)})\cap s(\nu^{-1}(p),\widetilde{X}) 
\end{equation}
by Gonz\'{a}lez-Sprinberg and Verdier \cite{Gon} where $s(\nu^{-1}(p),\widetilde{X})$ is the Segre class of $\nu^{-1}(p)$ in $\widetilde{X}$. In fact the original definition of the local Euler Obstruction is defined topologically by MacPherson. We note that $\mathrm{Eu}_X(p)=1$ if a point $p$ is smooth in $X$. 


Let $F_*(X)$ be the group of constructible functions on $X$ and $\mathbbm{1}_{W}$ the characteristic function of $W$ for a closed subset $W\subset X$. The elements of $F_*(X)$ are expressed as a finite sum ${\sum_i} a_i \mathbbm{1}_{W_i}$ for $a_i\in \mathbb{Z}$ and closed subsets $W_i\subset X$. 
We observe that the local Euler obstruction $\mathrm{Eu}_X:X\rightarrow \mathbb{Z}$ is constructible with respect to a Whitney stratification of $X$. Namely the function $\mathrm{Eu}(\mathbbm{1}_{W})(p)$ assigning $\mathrm{Eu}_{W}(p)$ if $p\in W$ and $0$ otherwise can be extended linearly as a basis of $F_*(X)$. 

Taking for granted that $f:X\rightarrow Y$ is a proper morphism, the pushforward $f_*:F_*(X)\rightarrow F_*(Y)$ induced by $f$ is defined to be $f_*(\mathbbm{1}_W)(p)=\chi(f^{-1}(p)\cap W)$ where $\chi$ is the topological Euler characteristic. The main result by MacPherson is the existence of the unique natural transformation $c_*:F_*\rightarrow A_*$ in the sense that firstly, 
\begin{equation}\label{eqn:sm}
c_*(\mathbbm{1}_X)=c(TX)\cap \left[X\right]
\end{equation}
if $X$ is smooth and secondly, the following functoriality holds: for any proper morphism $f:Y\rightarrow X$, the diagram 
\[
\begin{tikzcd}
F_*(Y)\arrow[r, "c_*"]\arrow[d, "f_*"]& A_*(Y)\arrow[d, "f_*"] \\
F_*(X)\arrow[r,  "c_*"]&  A_*(X)
\end{tikzcd}
\]
commutes. 
Let $W$ be closed in $X$. The {\it Chern-Schwartz-MacPherson} (CSM) class $c_{SM}(W)$ of $W$ is defined by the image of the characteristic function $\mathbbm{1}_W$ under the transformation $c_*$ as
$$c_{SM}(W):=c_*(\mathbbm{1}_W).$$
We can also define the CSM class for locally closed subsets $S$ of any fixed variety $M$. That is, if $S=X\backslash Y$ for closed subsets $X$ and $Y$ in $M$, the CSM class of $S$ can be attained by $$c_{SM}(S)=c_{SM}(X)-c_{SM}(Y).$$
In regard to a proper morphism $i:M\rightarrow N$ satisfying that $i_*$ is injective, we may write $c_{SM}(S)$ for $i_*c_{SM}(S)\in A_*(N)$. In fact, the injectivity of $i_*$ may not be necessary if $\{\mathcal{W}_i\}$ are the Whitney stratification of $N$ and $M=\overline{\mathcal{W}_i}$ is the closure of $\mathcal{W}_i$ with a locally closed set $S$ in $M$. In this case, since $\mathbbm{1}_S$ is also constructible in $N$, $c_{SM}(S)$ can be viewed as an element in $A_*(N)$.

The local Euler obstruction $\mathrm{Eu}_X\in F_*(X)$ on $X$ can be viewed as a finite sum $\sum_i e_i\mathbbm{1}_{\overline{\mathcal{W}_i}}$ of characteristic functions for any stratification $\{\mathcal{W}_i\}$ of $X$ where by definition $e_i$ is the local Euler obstruction $\mathrm{Eu}_{\overline{\mathcal{W}_i}}(p)$ of $\mathcal{W}_i$ at any point $p\in\overline{\mathcal{W}_i}$. Since the Mather class $c_{M}(X)$ can be seen as to the transformation, i.e., $c_{M}(X)= c_*(\mathrm{Eu}_X)$, we have the Mather class  
\begin{equation}\label{eqn:mather}
c_{M}(X)=c_*(\sum_i e_i\mathbbm{1}_{\overline{\mathcal{W}_i}})=\sum_i e_i c_*(\mathbbm{1}_{\overline{\mathcal{W}_i}})=\sum e_i c_{SM}(\overline{\mathcal{W}_i})
\end{equation}
of $X$ in connection with CSM classes.

We consider a Whitney stratification $\{\mathcal{W}_i\}_{i\in I}$ of a variety $M=\cup_{i\in I}\mathcal{W}_i$ for an total ordering index set $I$ such that 
\begin{enumerate}
\item $\mathcal{W}_i\subset \overline{\mathcal{W}}_j$ if and only if $i\leq j$ for $i,j\in I$, and
\item $X=\overline{\mathcal{W}}_{i_0}$ for some $i_0\in I$.
\end{enumerate}
Let $\pi:Z\rightarrow X$ be a resolution of singularities of $X$. We assume the existence of resolutions of singularities $\pi_i:Z_i\rightarrow \overline{\mathcal{W}}_i$ on each stratum $\overline{\mathcal{W}}_i$ for $i\in I$ and the restriction of the resolutions $\pi_i$ on any stratum $\mathcal{W}_j\subset \overline{\mathcal{W}}_i$ as a fiber bundle. Let $d_{i,j}$ denote the topological Euler characteristic $\chi(\pi_i^{-1}(p))$ of the fiber of the resolution $\pi$ over any point $p$ in some strata $\mathcal{W}_j$ for $i,j\in I$. Then we land at
 \begin{equation} \label{eq}
 (\pi_i)_*c_{M}(Z_i)\labelrel={eq:a}(\pi_i)_*c_{SM}(Z_i)\labelrel={eq:b}(\pi_i)_*c_{*}(\mathbbm{1}_{Z_i})\labelrel={eq:c}c_{*}(\pi_i)_*(\mathbbm{1}_{Z_i})=\sum_{j\leq i}d_{i,j}c_*(\mathbbm{1}_{\mathcal{W}_j}).
 \end{equation}
Here the first equality ~\eqref{eq:a} comes from the fact that the Chern-Mather class coincides with CSM class if the variety is nonsingular, the second equality ~\eqref{eq:b} is by definition and the third one ~\eqref{eq:c} by the naturality of the MacPherson transformation $c_*$.

\subsection{Mather classes via IH-small resolutions}

Let $X$ be an irreducible subvariety of a smooth complex algebraic variety $M$. By \cite[\S 1.1]{GM}, $X$ admits a stratification, so that we can define the {\it intersection cohomology (IC) sheaf} of $X$ denoted by $\mathcal{IC}_X^\bullet$ \cite[Intro.]{GM1}. The $\mathcal{IC}$-sheaf of $X$ is constructible with respect to any Whitney stratifications of $M$ \cite[\S 3.1]{Benjamin}, and it is a (middle perversity) perverse sheaf on $M$. 

A resolution $\pi:Z\rightarrow X$ is {\it IH-small} (in the intersection cohomolgy sense) if 
$$\mathrm{codim}\;\{p\in X\;|\;\mathrm{dim}\;\pi^{-1}(p)\geq i\}>2i$$
 for all $i>0$. This resolution is referred to by Totaro \cite{T} as the IH-small resolution whereas Goresky and MacPherson \cite{GM1} originally calls it the small resolution. A conceivable reason to adapt name for $\pi$ as the IH-small resolution likely stemmed from the property that the intersection homology of $X$ is isomorphic to the ordinary cohomology of $Z$. 
 
 We use $H^\bullet(X;\mathbb{C})$ to denote the ordinary cohomology of $X$ with complex coefficients. Let $D^b(X)$ be the constructible {\it derived category} on a (quasi-projective) complex variety $X$ and $Rf_*:D^b(X)\rightarrow D^b(Y)$ be the right derived functor of the direct image functor for $f:X\rightarrow Y$ of (quasi-projective) complex varieties. Let $\mathbb{C}_Y$ be the constant sheaf in degree zero having stalk $\mathbb{C}$ at all points of $Y$. If $M$ is a smooth complex algebraic variety of the dimension $m$, then we have $\mathcal{IC}_M^\bullet=\mathbb{C}_M[2m]$ where $\mathcal{F}[n]^i$ indicates the complex $\mathcal{F}^{i+n}$ of sheaves $\mathcal{F}^\bullet$. One may refer to \cite{GM1} for these notations.
 The topological relation between the locus $Z$ and the base variety $X$ can be found in the following proposition by Goresky-MacPherson.

\begin{proposition}[{\cite[\S 6.2]{GM1}}]\label{Euler}
Let $X$ be a $d$-dimensional irreducible complex algebraic variety. If $\pi:Z\rightarrow X$ is a IH-small resolution of singularities, then $R\pi_*\mathbb{C}_Z[2d]\cong \mathcal{IC}_X^\bullet$. In particular for a point $p\in X$,
$$\chi_p(\mathcal{IC}_X^\bullet)=\sum_i(-1)^i\mathrm{dim} \;H^i(\pi^{-1}(p);\mathbb{C})=\chi(\pi^{-1}(p))$$
where $\chi_p(\mathcal{IC}_X^\bullet)$ denotes the stalk Euler characteristic of $\mathcal{IC}$-sheaf at the point $p$.
\end{proposition}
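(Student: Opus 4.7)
The plan is to verify that $R\pi_*\mathbb{C}_Z[2d]$ satisfies the axiomatic (Deligne--Goresky--MacPherson) characterization of the intersection cohomology sheaf, and then derive the Euler characteristic formula by taking alternating sums of the stalks computed via proper base change. Recall that, in the convention of the paper, $\mathcal{IC}_X^\bullet$ is the unique object in $D^b(X)$ (up to quasi-isomorphism) that restricts to $\mathbb{C}_U[2d]$ on some open dense smooth $U\subset X$, satisfies a support condition on $\dim\mathrm{supp}\,\mathcal{H}^k$ for $k>-2d$, and satisfies the Verdier-dual cosupport condition.

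First I would choose $U\subset X$ to be an open dense set over which $\pi$ is an isomorphism (which exists because $\pi$ is birational); on $U$ one trivially has $R\pi_*\mathbb{C}_Z[2d]|_U=\mathbb{C}_U[2d]$, so the normalization axiom holds. Next, proper base change (using that $\pi$ is proper) gives, for every $p\in X$,
$$\mathcal{H}^k\bigl(R\pi_*\mathbb{C}_Z[2d]\bigr)_p \;=\; H^{k+2d}(\pi^{-1}(p);\mathbb{C}),$$
so the support of $\mathcal{H}^k$ is contained in the locus where $\dim_{\mathbb{C}}\pi^{-1}(p)\geq(k+2d)/2$. Setting $i=\lceil(k+2d)/2\rceil$, which is positive whenever $k>-2d$, the IH-small condition asserts that this locus has codimension strictly greater than $2i$, and this delivers precisely the strict inequality required by the support axiom for $\mathcal{IC}_X^\bullet$. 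The strength of the IH-small hypothesis is exactly what makes this dimension count work; the weaker (non-strict) semi-small variant would fail to give the sharp bound.

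For the cosupport axiom, Verdier duality closes the argument. Since $Z$ is smooth of complex dimension $d$, Poincar\'e duality gives $\mathbb{D}(\mathbb{C}_Z[2d])\cong\mathbb{C}_Z[2d]$, and since $\pi$ is proper so that $R\pi_*=R\pi_!$, the complex $R\pi_*\mathbb{C}_Z[2d]$ is Verdier self-dual. Hence the cosupport condition is the Verdier-dual of the support condition already verified, and therefore also holds. By Deligne's uniqueness theorem, $R\pi_*\mathbb{C}_Z[2d]\cong\mathcal{IC}_X^\bullet$.

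Finally, the identity for $\chi_p(\mathcal{IC}_X^\bullet)$ drops out by substituting the stalk computation into the definition of the stalk Euler characteristic and reindexing:
$$\chi_p(\mathcal{IC}_X^\bullet)\;=\;\sum_k(-1)^k\dim_{\mathbb{C}}H^{k+2d}(\pi^{-1}(p);\mathbb{C})\;=\;\sum_j(-1)^j\dim_{\mathbb{C}}H^j(\pi^{-1}(p);\mathbb{C})\;=\;\chi(\pi^{-1}(p)).$$
The main obstacle is the support-axiom dimension count, where the strict strengthening in the IH-small hypothesis is essential; everything else is formal from proper base change, Poincar\'e--Verdier duality, and Deligne's uniqueness.
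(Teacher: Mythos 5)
The paper does not prove this proposition; it simply cites \cite[\S 6.2]{GM1}, and your argument is exactly the standard one given there: verify Deligne's axioms for $R\pi_*\mathbb{C}_Z[2d]$ (normalization over the locus where $\pi$ is an isomorphism, the support bound from proper base change plus the strict codimension inequality in the IH-small condition, and cosupport from Verdier self-duality of the pushforward of the shifted constant sheaf from the smooth $Z$), then take stalkwise Euler characteristics. Your dimension count and the reindexing $(-1)^{k}=(-1)^{k+2d}$ are both correct, so the proposal is sound and matches the cited proof.
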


Let us take a smooth complex variety $M$ equipped with a Whitney stratification $M=\cup_{i\in I}\mathcal{W}_i$ and an irreducible variety $X\subset M$. For the $\mathcal{IC}$-sheaf $\mathcal{IC}_X^\bullet$ of $X$, we have a corresponding cycle $\mathcal{CC}(\mathcal{IC}_X^\bullet)$. This cycle is called the {\it characteristic cycle} of the $\mathcal{IC}$-sheaf of $X$ in the cotangent bundle $T^*M$. As $\mathcal{IC}_X^\bullet$ is constructible with respect to the stratification $\{\mathcal{W}_i\}$ of $M$, the characteristic cycle of $\mathcal{IC}$-sheaf may be written as a (conical) Lagrangian cycle
$$\mathcal{CC}(\mathcal{IC}_X^\bullet)=\sum_{i\in I}\gamma_i(\mathcal{IC}_X^\bullet)\cdot\left[\overline{T^*_{\mathcal{W}_i}M}\right],$$
an element in the free abelian group generated by the {\it conormal cycles} $\left[\overline{T^*_{\mathcal{W}_i}M}\right]$ of $\mathcal{W}_i$ (cf. \cite{BDK}). Here the integer $\gamma_i(\mathcal{IC}_X^\bullet)$ is the {\it microlocal multiplicity} of $\mathcal{IC}_X^\bullet$ along $\mathcal{W}_i$. The cycle may be discussed in the perspective of the category of holonomic $\mathcal{D}_M$-modules \cite[\S 5.3]{D} or topological link spaces \cite[Section 4.1]{D}. For the IC-sheaf $\mathcal{IC}_X^\bullet$ on $M$, the local Euler obstruction along the $j$-th stratum $\mathcal{W}_j$ in the closure of $\mathcal{W}_i$ can be related to the microlocal multiplicity of $\mathcal{IC}^\bullet_X$ and the stalk Euler characteristic $\chi_i(\mathcal{IC}_X^\bullet)=\chi_{p_i}(\mathcal{IC}_X^\bullet)$ for $p_i\in \mathcal{W}_i$ as follows. (cf. \cite[Theorem 3]{D01}, \cite[Theorem 6.3.1]{K}.)

\begin{theorem}[Microlocal index formula for $\mathcal{IC}$-sheaf]
For any $i\in I$ 
$$\chi_j(\mathcal{IC}_X^\bullet)=\sum_{i\in I} (-1)^{n_i} \mathrm{Eu}_{\overline{\mathcal{W}}_i}(\mathcal{W}_j)\cdot \gamma_i(\mathcal{IC}_X^\bullet)$$
where $n_i$ is the dimension of $\mathcal{W}_i$.
\end{theorem}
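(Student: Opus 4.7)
The plan is to obtain the formula as a direct consequence of the Kashiwara--MacPherson correspondence between constructible functions and conical Lagrangian cycles on $M$, applied to the constructible complex $\mathcal{IC}_X^\bullet$. The essential point is that this correspondence converts the characteristic cycle decomposition into a pointwise expansion of the stalk Euler characteristic in the basis of local Euler obstructions.

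First I would recall the two relevant bases. The group $F_\ast(M)$ of constructible functions with respect to the stratification $\{\mathcal{W}_i\}_{i\in I}$ has the $\mathbb{Z}$-basis $\{\mathrm{Eu}_{\overline{\mathcal{W}}_i}\}_{i\in I}$ (triangularly related to $\{\mathbbm{1}_{\mathcal{W}_i}\}_{i\in I}$), while the group $L(M)$ of conical Lagrangian cycles supported on $\bigsqcup_{i} T^*_{\mathcal{W}_i}M$ has basis $\{[\overline{T^*_{\mathcal{W}_i}M}]\}_{i\in I}$. The Kashiwara--MacPherson theorem asserts that the assignment $\mathrm{Eu}_{\overline{\mathcal{W}}_i} \mapsto (-1)^{\dim \mathcal{W}_i}[\overline{T^*_{\mathcal{W}_i}M}]$ extends to a $\mathbb{Z}$-linear isomorphism which, for any constructible complex $\mathcal{F}^\bullet$, intertwines the characteristic cycle $\mathcal{CC}(\mathcal{F}^\bullet)$ with the stalk Euler characteristic function $p \mapsto \chi_p(\mathcal{F}^\bullet)$. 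Concretely, if $\mathcal{CC}(\mathcal{F}^\bullet) = \sum_i m_i [\overline{T^*_{\mathcal{W}_i}M}]$ then $\chi_p(\mathcal{F}^\bullet) = \sum_i m_i \cdot \mathrm{Eu}_{\overline{\mathcal{W}}_i}(p)$ for every $p\in M$, the signs $(-1)^{\dim \mathcal{W}_i}$ canceling against the perverse shift built into the conventions for $\mathcal{CC}$.

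Applying this identity to $\mathcal{F}^\bullet = \mathcal{IC}_X^\bullet$ with characteristic cycle $\sum_{i\in I}\gamma_i(\mathcal{IC}_X^\bullet)\cdot[\overline{T^*_{\mathcal{W}_i}M}]$, and evaluating at a point $p_j \in \mathcal{W}_j$ — using that $\mathrm{Eu}_{\overline{\mathcal{W}}_i}$ is constant along each stratum, with value denoted $\mathrm{Eu}_{\overline{\mathcal{W}}_i}(\mathcal{W}_j)$ — yields immediately
$$\chi_j(\mathcal{IC}_X^\bullet) = \sum_{i\in I}\mathrm{Eu}_{\overline{\mathcal{W}}_i}(\mathcal{W}_j)\cdot\gamma_i(\mathcal{IC}_X^\bullet),$$
which is the desired formula.

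The main obstacle is the Kashiwara--MacPherson correspondence itself. The cleanest route is to invoke it from \cite[Theorem 6.3.1]{K} or \cite[Theorem 3]{D0}; alternatively one may establish it in the present setting by induction on $\dim \overline{\mathcal{W}}_i$, expressing $\mathbbm{1}_{\mathcal{W}_i}$ in the Euler-obstruction basis and matching microlocal multiplicities on a normal slice at a generic point of $\mathcal{W}_i$ using the Segre-class definition of $\mathrm{Eu}_{\overline{\mathcal{W}}_i}$. The technical subtlety lies entirely in the bookkeeping of the perverse shift in $\mathcal{IC}_X^\bullet$ and the sign $(-1)^{\dim \mathcal{W}_i}$ in the correspondence, which must cancel cleanly so that no extraneous signs appear in the final identity.
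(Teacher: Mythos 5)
Your proposal is correct and matches the paper's treatment: the paper states this as a known result, citing exactly the Dubson and Kashiwara index theorems (\cite{D0}, \cite{K}) that you invoke, and your derivation—expanding $\mathcal{CC}(\mathcal{IC}_X^\bullet)$ in the conormal basis and applying the correspondence with the local Euler obstruction basis of constructible functions—is the standard argument behind those citations. Your remark about the sign $(-1)^{\dim\mathcal{W}_i}$ being absorbed into the convention for the microlocal multiplicities $\gamma_i$ is the right bookkeeping point to flag, and nothing further is needed.
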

We mainly focus on the case where the variety $X$ admits a IH-small resolution $\pi:Z\rightarrow X$, especially on the subject of Chern-Mather classes of $X$. 
We say that the characteristic cycle of $\mathcal{IC}_X^\bullet$ is irreducible if
\[
CC(\mathcal{IC}_X^\bullet)=\left[\overline{T^*_{\mathcal{W}_{i_0}}M}\right].
\]
With an irreducible characteristic cycle of $\mathcal{IC}_X^\bullet$, the proposition below tells us a direct connection between the local Euler obstruction and the topological Euler characteristic $d_{i,j}$ \eqref{eq} of the fiber of the resolution.

\begin{proposition}[{\cite[Proposition 3.2.3]{Benjamin}}]\label{prop:2.3}
Let $\pi:Z\rightarrow X$ be a IH-small resolution of singularities and the characteristic cycle of $\mathcal{IC}_X^\bullet$ is irreducible. Then we have
$$d_{i_0,j}=\mathrm{Eu}_X(p_j)$$
for $p_j\in \mathcal{W}_j$.
\end{proposition}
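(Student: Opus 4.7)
The plan is to chain together Proposition~\ref{Euler} and the microlocal index formula displayed just above the statement. First I would use the Goresky--MacPherson quasi-isomorphism $R\pi_*\mathbb{C}_Z[2d]\cong\mathcal{IC}_X^\bullet$ to rewrite, for any $p_j\in\mathcal{W}_j$,
$d_{i_0,j}=\chi(\pi^{-1}(p_j))=\chi_{p_j}(\mathcal{IC}_X^\bullet)=\chi_j(\mathcal{IC}_X^\bullet)$.
This reduces the goal to identifying the stalk Euler characteristic of $\mathcal{IC}_X^\bullet$ at $p_j$ with the local Euler obstruction $\mathrm{Eu}_X(p_j)$.

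Next I would apply the microlocal index formula
$\chi_j(\mathcal{IC}_X^\bullet)=\sum_{i\in I}\mathrm{Eu}_{\overline{\mathcal{W}}_i}(\mathcal{W}_j)\cdot\gamma_i(\mathcal{IC}_X^\bullet)$
and leverage the hypothesis that $\mathcal{CC}(\mathcal{IC}_X^\bullet)$ is irreducible. Since $X=\overline{\mathcal{W}}_{i_0}$ and $\mathcal{IC}_X^\bullet$ is generically the shifted constant sheaf on the smooth locus $\mathcal{W}_{i_0}$, the unique conormal cycle appearing in $\mathcal{CC}(\mathcal{IC}_X^\bullet)$ must be $[\overline{T^*_{\mathcal{W}_{i_0}}M}]$, and its microlocal multiplicity is $\gamma_{i_0}(\mathcal{IC}_X^\bullet)=1$, with $\gamma_i(\mathcal{IC}_X^\bullet)=0$ for every $i\neq i_0$. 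Only the $i=i_0$ term of the sum therefore survives, giving $\chi_j(\mathcal{IC}_X^\bullet)=\mathrm{Eu}_{\overline{\mathcal{W}}_{i_0}}(\mathcal{W}_j)=\mathrm{Eu}_X(p_j)$. Combining this with the previous display yields the advertised equality $d_{i_0,j}=\mathrm{Eu}_X(p_j)$.

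The step requiring the most care is justifying that irreducibility of $\mathcal{CC}(\mathcal{IC}_X^\bullet)$ forces not merely a single conormal component but the unit multiplicity $\gamma_{i_0}=1$; this is a consequence of the intermediate-extension normalization of $\mathcal{IC}_X^\bullet$ on the open stratum $\mathcal{W}_{i_0}$, and it is the one convention-dependent point in the argument. Everything else is a formal substitution of Proposition~\ref{Euler} into the microlocal index formula.
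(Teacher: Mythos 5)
Your argument is correct and is exactly the intended one: the paper itself only cites \cite[Proposition 3.2.3]{Benjamin} for this statement, but the two ingredients it sets up immediately beforehand (Proposition~\ref{Euler} and the microlocal index formula) are precisely what you chain together, and your observation that irreducibility together with the normalization $\gamma_{i_0}(\mathcal{IC}_X^\bullet)=1$ on the open stratum collapses the index formula to the single term $\mathrm{Eu}_X(p_j)$ is the right (and only delicate) point, consistent with the paper's convention as seen in Theorem~\ref{Thm:irred}. No gaps.
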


Under the assumption that the characteristic cycle of $\mathcal{IC}_X^\bullet$ is irreducible, the Chern-Mather class of $X$ can be achieved by a IH-small resolution of $X$ as follows. 
\begin{theorem}[{\cite[Theorem 3.3.1]{Benjamin}}]\label{thm:CM}
The Chern-Mather class of $X$ is the pushforward of the total Chern class of the variety $Z$ as
$$c_{M}(X)=\pi_*c(TZ).$$
\end{theorem}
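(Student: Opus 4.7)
The plan is to chain together a stratified decomposition of $c_M(X)$ in terms of Chern-Schwartz-MacPherson classes of strata, the computation of $\pi_* c_M(Z)$ provided by \eqref{eq}, and the coefficient identification coming from the irreducibility of $\mathcal{CC}(\mathcal{IC}_X^\bullet)$. Throughout, I use the Whitney stratification $\{\mathcal{W}_i\}_{i\in I}$ with $X = \overline{\mathcal{W}}_{i_0}$.

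First I would expand $c_M(X)$ as a combination of CSM classes. Since $\mathrm{Eu}_X \in F_*(X)$ is constructible and constant on each stratum, it decomposes as $\mathrm{Eu}_X = \sum_{j\le i_0} \mathrm{Eu}_X(p_j)\,\mathbbm{1}_{\mathcal{W}_j}$ for any choice of base points $p_j \in \mathcal{W}_j$. Applying $c_*$, which is $\mathbb{Z}$-linear and sends $\mathrm{Eu}_X$ to $c_M(X)$, yields
\[
c_M(X) \;=\; \sum_{j\le i_0} \mathrm{Eu}_X(p_j)\,c_{SM}(\mathcal{W}_j).
\]
On the other hand, \eqref{eq} specialized to $i = i_0$ gives
\[
\pi_* c_M(Z) \;=\; \sum_{j\le i_0} d_{i_0,j}\,c_{SM}(\mathcal{W}_j),
\]
where $d_{i_0,j} = \chi(\pi^{-1}(p_j))$. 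The two sums share the same CSM basis, so it suffices to match coefficients, and this is where the hypotheses are used: Proposition~\ref{prop:2.3}, which needs both IH-smallness of $\pi$ and irreducibility of $\mathcal{CC}(\mathcal{IC}_X^\bullet)$, gives $d_{i_0,j} = \mathrm{Eu}_X(p_j)$. Comparing stratum by stratum yields $\pi_* c_M(Z) = c_M(X)$. Finally, since $Z$ is smooth, \eqref{eqn:sm} gives $c_M(Z) = c(TZ)\cap[Z]$, so $c_M(X) = \pi_*\bigl(c(TZ)\cap[Z]\bigr)$, which is the stated formula under the paper's convention of writing $\pi_* c(TZ)$ for this pushforward.

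The only non-formal input is the coefficient identification $d_{i_0,j} = \mathrm{Eu}_X(p_j)$, and this is where the irreducibility hypothesis genuinely enters. Without it, the microlocal index formula $\chi_j(\mathcal{IC}_X^\bullet) = \sum_{i} \mathrm{Eu}_{\overline{\mathcal{W}}_i}(\mathcal{W}_j)\cdot\gamma_i(\mathcal{IC}_X^\bullet)$ splits the fiber Euler characteristic across several conormal contributions, so $\pi_* c_M(Z)$ would differ from $c_M(X)$ by a $\mathbb{Z}$-linear combination of Chern-Mather classes of smaller strata closures $\overline{\mathcal{W}}_i$, $i < i_0$. Once the CSM decomposition is in place, the substance of the theorem is therefore concentrated in invoking Proposition~\ref{prop:2.3} correctly.
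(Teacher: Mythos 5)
Your proposal is correct and follows essentially the same route as the paper's proof: decompose $c_M(X)$ via \eqref{eqn:mather}, identify the coefficients $d_{i_0,j}=\mathrm{Eu}_X(p_j)$ via Proposition~\ref{prop:2.3} (where IH-smallness and irreducibility of the characteristic cycle enter), use \eqref{eq} to recognize the resulting sum as $\pi_*c_{SM}(Z)$, and conclude by smoothness of $Z$ and \eqref{eqn:sm}. The only difference is organizational (you compute both sides and match coefficients rather than writing a single chain of equalities), and your closing remark about where irreducibility is genuinely needed is accurate.
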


\begin{proof}
Since $Z=Z_{i_0}$ and $X=X_{i_0}$, we have 
\begin{align*}
c_{M}(X)&\labelrel={eqn;sm a}\sum \mathrm{Eu}_{\overline{\mathcal{W}_j}}(p_j)c_*(\mathbbm{1}_{\overline{\mathcal{W}_j}})\labelrel={eqn;sm b}\sum_{j\leq i_0}d_{i_0,j}c_*(\mathbbm{1}_{\overline{\mathcal{W}_j}})\labelrel={eqn;sm c}\pi_*c_{SM}(Z)\\
&\labelrel={eqn;sm d}\pi_*c(TZ)
\end{align*}
where the equality ~\eqref{eqn;sm a} follows from \eqref{eqn:mather}, the second equality ~\eqref{eqn;sm b} by Proposition \ref{prop:2.3} and the third equality ~\eqref{eqn;sm c} by the equation \eqref{eq}. Lastly the smoothness of $Z$ guarantees the equality ~\eqref{eqn;sm d} by means of \eqref{eqn:sm}.
\end{proof}

\subsection{Kazhdan-Lusztig classes of Schubert varieties}

Let $G$ be a complex semisimple Lie group, $P$ a standard parabolic subgroup, $B$ a Borel subgroup, and $B^{-}$ the opposite Borel subgroup, with a maximal torus $T=B\cap B^{-1}$ such that $T\subset B\subset P$. Let $W:=N(T)/T$ be the Weyl group of $G$ where $N(T)$ is the normalizer of $T$. To be specific, our attention focuses on a classical group $G$, which is $SL(n)$ for type A, $Sp({2n})$ for type C, $SO(2n+1)$ for type B and $SO(2n)$ for type D with their Weyl group $W_n^A, W_{n}^C, W_{n}^B$ and $W_{n}^D$. 

For a classical group $G$, we denote by $G/P$ the generalized flag manifold and $W_P\subset W$ the Weyl group of $P$. Let $W^P$ be the set of minimal representatives of the coset $W/W_P$, so that it has a role of an index set for the $T$-fixed points $(G/P)^T$. It is notorious that there is a one to one correspondence between an element $w_{\underline{\alpha}}\in W^P$ and a partition $\underline{\alpha}=(1\leq \alpha_1\leq\cdots\leq\alpha_s)$ for some $s$: for instance in type A, the Weyl group $W^A_n$ is identified with the symmetric group $S_n$ so that an element $w_{\underline{\alpha}}\in W^P\subset S_n$ defines a partition $\underline{\alpha}$ by setting $\alpha_{s+1-k}=n-w(k)+k$ and vice versa. So we may use $\underline{\alpha}$ for the element $w_{\underline{\alpha}}\in W^P$ by abuse of notation. 

Let $\mathbb{S}(\underline{\alpha})^\circ:=Bw_{\underline{\alpha}}P/P$ be a Schubert cell in $G/P$ for $w_{\underline{\alpha}}\in W^P$. For the length function $\ell:W\rightarrow \mathbb{N}$, the Schubert variety $\mathbb{S}(\underline{\alpha})$ whose dimension is $\ell(w_{\underline{\alpha}})$ is the $B$-orbit closure $\overline{Bw_{\underline{\alpha}}P/P}$ of a $T$-fixed point $p_{\underline{\alpha}}:=w_{\underline{\alpha}}P/P$. The Schubert variety associated to the longest element $w_\circ\in W^P$ can be treated as a homogeneous space $G/P$ that possesses a Whitney stratification by its sub-Schubert varieties $\mathbb{S}(\underline{\beta})$ for $w_{\underline{\beta}}\leq w_\circ$ in Bruhat order.

The {\it Kazhdan-Lusztig (KL) class} of a Schubert variety $\mathbb{S}(\underline{\alpha})$ in $G/P$ is defined by 
\begin{equation}\label{eq:KLspan}
KL(\mathbb{S}(\underline{\alpha}))=\sum_{\underline{\beta}}P_{\underline{\alpha},\underline{\beta}}(1) c_{SM}(\mathbb{S}({\underline{\beta}})^\circ)
\end{equation}
where $P_{\underline{\alpha},\underline{\beta}}(q)$ is the Kazhdan-Lusztig polynomial. The KL class of a Schubert variety turns out to be the pushforward of the total Chern class of the IH-small resolution.

\begin{theorem}[{\cite[Section 6, pag. 10]{AMSS}}]\label{thm:KL}
Let $\mathbb{S}(\underline{\alpha})$ be a Schubert variety and $\pi:Z\rightarrow \mathbb{S}(\underline{\alpha})$ a IH-small resolution of singularities over $\mathbb{S}(\underline{\alpha})$. Then
$$KL(\mathbb{S}(\underline{\alpha}))=\pi_*c(TZ).$$
\end{theorem}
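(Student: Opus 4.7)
The plan is to derive the identity by pushing forward the characteristic function $\mathbbm{1}_Z$ along $\pi$ and interpreting the result stratum by stratum via the IC-stalks of $\mathbb{S}(\underline{\alpha})$. Since $Z$ is smooth, the equality $c_{SM}(Z)=c(TZ)\cap[Z]$ from \eqref{eqn:sm} converts the right-hand side of the theorem into $\pi_* c_{SM}(Z)$, so it suffices to show
$$\pi_* c_{SM}(Z) \;=\; \sum_{\underline{\beta}} P_{\underline{\alpha},\underline{\beta}}(1)\, c_{SM}(\mathbb{S}(\underline{\beta})^\circ).$$

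First I would apply the functoriality of MacPherson's transformation $c_*$ to the proper map $\pi$, which gives
$$\pi_* c_{SM}(Z) \;=\; \pi_* c_*(\mathbbm{1}_Z) \;=\; c_*\bigl(\pi_*\mathbbm{1}_Z\bigr).$$
Next, I would decompose the constructible function $\pi_*\mathbbm{1}_Z$ according to the stratification of $\mathbb{S}(\underline{\alpha})$ by Schubert cells $\mathbb{S}(\underline{\beta})^\circ$. By the definition of the pushforward of constructible functions, its value at a point $p_{\underline{\beta}}\in \mathbb{S}(\underline{\beta})^\circ$ is the topological Euler characteristic $\chi(\pi^{-1}(p_{\underline{\beta}}))$, and this value depends only on the stratum because $\pi$ restricts to a locally trivial fiber bundle along each $\mathbb{S}(\underline{\beta})^\circ$ (the fibers over a Schubert cell are Bruhat-constant). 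Hence
$$\pi_*\mathbbm{1}_Z \;=\; \sum_{\underline{\beta}} \chi\!\bigl(\pi^{-1}(p_{\underline{\beta}})\bigr)\,\mathbbm{1}_{\mathbb{S}(\underline{\beta})^\circ}.$$

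The crucial step is to identify each coefficient $\chi(\pi^{-1}(p_{\underline{\beta}}))$ with $P_{\underline{\alpha},\underline{\beta}}(1)$. Since $\pi$ is IH-small, Proposition \ref{Euler} gives $R\pi_*\mathbb{C}_Z[2d]\cong \mathcal{IC}_{\mathbb{S}(\underline{\alpha})}^\bullet$ and therefore
$$\chi\!\bigl(\pi^{-1}(p_{\underline{\beta}})\bigr) \;=\; \chi_{p_{\underline{\beta}}}\!\bigl(\mathcal{IC}_{\mathbb{S}(\underline{\alpha})}^\bullet\bigr).$$
I would then invoke the Kazhdan–Lusztig–Deligne theorem identifying the stalk Poincar\'e polynomial of the IC-sheaf of a Schubert variety with the KL polynomial, which in particular evaluates at $q=1$ to $P_{\underline{\alpha},\underline{\beta}}(1) = \chi_{p_{\underline{\beta}}}(\mathcal{IC}_{\mathbb{S}(\underline{\alpha})}^\bullet)$ (parity vanishing of the IC stalks for Schubert varieties guarantees that the signed Euler characteristic equals the sum of stalk dimensions). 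This is the main input and the step where the geometric content is concentrated.

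Putting everything together, applying $c_*$ term by term and using $c_*(\mathbbm{1}_{\mathbb{S}(\underline{\beta})^\circ}) = c_{SM}(\mathbb{S}(\underline{\beta})^\circ)$ yields
$$\pi_* c(TZ) \;=\; c_*\!\left(\sum_{\underline{\beta}} P_{\underline{\alpha},\underline{\beta}}(1)\,\mathbbm{1}_{\mathbb{S}(\underline{\beta})^\circ}\right) \;=\; \sum_{\underline{\beta}} P_{\underline{\alpha},\underline{\beta}}(1)\, c_{SM}(\mathbb{S}(\underline{\beta})^\circ) \;=\; KL(\mathbb{S}(\underline{\alpha})),$$
which is exactly the claim. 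The main obstacle is not the MacPherson functoriality, which is formal, but ensuring the precise normalization between the KL polynomial and the IC stalk Euler characteristic; I would cite the standard references on Kazhdan–Lusztig theory (Kazhdan–Lusztig, and the geometric interpretation via IC-sheaves) to handle this cleanly.
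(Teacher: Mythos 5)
Your proposal is correct and follows essentially the same route as the paper's proof: both convert $\pi_*c(TZ)$ into $c_*(\pi_*\mathbbm{1}_Z)$ via MacPherson functoriality, decompose by Schubert cells with coefficients $\chi(\pi^{-1}(p_{\underline{\beta}}))$ (this is the paper's equation \eqref{eq}), and then use Proposition \ref{Euler} together with the geometric Kazhdan--Lusztig theorem (the paper cites \cite[Theorem 12.2.5]{HTT} and \cite[Proposition 1]{Z}) to identify these coefficients with $P_{\underline{\alpha},\underline{\beta}}(1)$. The normalization/parity issue you flag as the delicate point is exactly the one the paper handles by noting the odd-degree vanishing of the IC stalks.
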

\begin{proof}
We know from \cite[Theorem 12.2.5]{HTT} that 
$$P_{\underline{\alpha},\underline{\beta}}(1)=\sum_j(-1)^j\mathrm{dim}\; \mathcal{H}^j(\mathcal{IC}(\mathbb{S}(\underline{\alpha})))_{p_{\underline{\beta}}},$$
where $\mathcal{H}^j(\mathcal{IC}(\mathbb{S}(\underline{\alpha})))_{p_{\underline{\beta}}}$ indicates the stalk of the $j$-th cohomology sheaf $\mathcal{H}^j(\mathcal{IC}(\mathbb{S}(\underline{\alpha})))$ of the $\mathcal{IC}$-sheaf of the Schubert variety $\mathbb{S}(\underline{\alpha})$ at a $T$-fixed point $p_{\underline{\beta}}$. Notably, $\mathcal{H}^j(\mathcal{IC}(\mathbb{S}(\underline{\alpha})))_{p_{\underline{\beta}}}$ vanishes for odd number $j$.
Proposition \ref{Euler} and \cite[Proposition 1]{Z} yield that the stalk $\mathcal{H}^j(\mathbb{S}(\underline{\alpha}))_{p_{\underline{\beta}}}$ is isomorphic to the $j$-th cohomology $H^j(\pi^{-1}(p_{\underline{\beta}});\mathbb{C})$ to get 
$$\chi_{p_{\underline{\beta}}}(\mathcal{IC}_{\mathbb{S}(\underline{\alpha})}^\bullet)=P_{\underline{\alpha},\underline{\beta}}(1).$$
Let $d_{\underline{\alpha},\underline{\beta}}=\chi(\pi^{-1}(p_{\underline{\beta}}))$. Owing to \eqref{eq} and Proposition \ref{Euler} we have 
\begin{align*}
\pi_*c(TZ)&=\sum_{\underline{\beta}\leq\underline{\alpha}}d_{\underline{\alpha},\underline{\beta}}c_*(\mathbbm{1}_{\mathbb{S}(\underline{\beta})^\circ})=\sum_{\underline{\beta}\leq \underline{\alpha}}P_{\underline{\alpha},\underline{\beta}}(1)c_{SM}(\mathbb{S}(\underline{\beta})^\circ)\\
&=KL(\mathbb{S}(\underline{\alpha})).\qedhere
\end{align*}
\end{proof}

We observe that the characteristic cycle $\mathcal{CC}(\mathcal{IC}_X^\bullet)$ of the $\mathcal{IC}$-sheaf is irreducible if and only if the Kazhdan-Lusztig polynomial evaluated at $q=1$ gives the local Euler obstruction
$$P_{\underline{\alpha},\underline{\beta}}(1)=Eu_{\mathbb{S}({\underline{\alpha}})}(p_{\underline{\beta}})$$
for $\underline{\beta}\in W^P$, which entails $KL(\mathbb{S}(\underline{\alpha}))=c_{M}(\mathbb{S}(\underline{\alpha}))$.

\section{IH-small resolutions of Schubert varieties in the orthogonal Grassmannian $OG(n,\mathbb{C}^{2n})$}\label{sec3}

Throughout this section, we largely refer to \cite{Benjamin} for some notations and structures and \cite{SV} for Sankaran and Vanchinathan's IH-small resolution for Schubert varieties inside the even orthogonal Grassmannians of maximal isotropic subspaces.

\subsection{Schubert varieties in Grassmannians of type D}
Let $G=SO(2n)$ be the special orthogonal group in dimension $2n$ over $\mathbb{C}$. Let $V$ be a vector space of rank $2n$ over $\mathbb{C}$, equipped with a nondegenerate quadratic form. An {\it isotropic} subspace $L$ of $V$ is a subspace of $V$ such that $L$ vanishes on the form, in other words, $L\subset L^\perp$ with respect to the symmetric form associated to the quadratic form. The projective homogeneous space $G/P$ can be characterized as the even orthogonal Grassmannian $OG(n,V)$ that parametrizes the maximal (rank $n$) isotropic subspaces of $V$. 
We consider a complete flag of isotropic subspaces 
$$0\subseteq V_1\subseteq \cdots\subseteq  V_n\subseteq (V_{n-1})^\perp\subseteq \cdots \subseteq (V_1)^\perp\subseteq V$$
of $V$ where $(V_i)^\perp=V_{2n-i}$ and the rank of $V_i$ is $i$. 
Let $\underline{\alpha}=(1\leq \alpha_1<\alpha_2<\cdots<\alpha_s\leq n)$ be a sequence of positive integers such that $n-s$ is even.

 For a fixed flag $V_{\alpha_1}\subseteq\cdots\subseteq V_{\alpha_s}\subset V$ in the partial flag $Fl^D(\underline{\alpha};V)$ of isotropic subspaces, the Schubert variety $\mathbb{S}(\underline{\alpha})$ is given by the closure of the locus called {\it Schubert cell}
\begin{equation*}\label{typeD:sch}
\mathbb{S}(\underline{\alpha})^\circ=\{L\;|\;\mathrm{dim}(L\cap V_{\alpha_i})= i\;\text{for all}\;1\leq  i\leq s\}\subset OG'(n, V)\;(\text{resp.}\;OG''(n,V))
\end{equation*}
 associated to $\underline{\alpha}$.
The dimension of the Schubert variety is $\sum_{i\leq s}\alpha_i +n(n-s)-\dfrac{1}{2}n(n+1).$
In principle, the rank conditions may contain the case of $V_{\alpha_s}=V_n$ to satisfy
 \begin{equation*}\label{parity}
 \mathrm{dim}(L\cap V_n)\equiv n\;\text{(mod $2$)}\; \text{(resp.}\; \mathrm{dim}(L\cap V_n)\equiv n+1\;\text{(mod $2$)}).
 \end{equation*}
  We say that the maximal isotropic subspace $L$ for the first case is in the same family as $V_n$ and the later in the opposite family.  
Moreover, there is another Schubert variety $\mathbb{S}(\underline{\beta})$ associated to a sequence $\underline{\beta}=(1\leq \beta_1<\beta_2<\cdots<\beta_r\leq n)$ so that $\mathbb{S}(\underline{\beta})\subseteq\mathbb{S}(\underline{\alpha})$ is if $s\leq r$ and $\alpha_1\geq \beta_1,\ldots,\alpha_s\geq\beta_s$.

\subsection{IH-small resolutions of Bott-Samelson type}

Given a Schubert variety associated to $\underline{\alpha}$, we can extract two sequences $\bold{a}=(a_1,\ldots, a_d)$ and $\bold{q}=(q_1<\cdots<q_d)$ such that $a_i$ is the length of consecutive numbers in $\underline{\alpha}$, $q_i$ is the last number of the block from $\underline{\alpha}$. The equations for the Schubert variety $\mathbb{S}(\underline{\alpha})$ define the closure of 
$$\mathbb{S}(\underline{\alpha})^\circ=\{ L\;|\; \mathrm{dim}(L\cap V_{q_j})= a_1+\cdots+a_j\;\text{for}\; 1\leq i\leq d\}$$
associated to the $2\times d$ matrix of the form either
$$
\mathfrak{H}:=
\begin{bmatrix}
q_1 &\cdots& q_{d}\\
a_1 & \cdots& a_d\\
\end{bmatrix}_{\underline{\alpha}}\quad\text{or}\quad
\begin{bmatrix}
q_1 &\cdots& q_{d}&n\\
a_1 & \cdots& a_d&1\\
\end{bmatrix}_{\underline{\alpha}},
$$
based on its family. The following example illustrates the matrix. 
 
  \begin{example}
Let $n=7$ and $\underline{\alpha}=(2,3,5)$. The matrix for the Schubert variety $\mathbb{S}(\underline{\alpha})$ in  $OG'(n, V)$ is 
\[
\mathfrak{H}=
\begin{bmatrix}
3 &5\\
2 & 1\\
\end{bmatrix}_{\underline{\alpha}}
\]
of $d=2$.
\end{example}

We additionally have a sequence $\bold{b}=(b_0,\ldots, b_{d-1})$ from $\mathfrak{H}$ by setting
$$b_{i-1}=q_i-q_{i-1}-a_i$$
for $1\leq i \leq d$ and $b_d=n-q_d$ with $q_0=0$.
In order for a IH-small resolution for $\mathbb{S}(\underline{\alpha})$ to exist, there are two conditions imposed on $\underline{\alpha}$, $\bold{a},\bold{b}$ and $\bold{q}$ that for a sequence $\underline{\alpha}$, either $\alpha_s<n-s$ or $\alpha_s=n, \alpha_{s-1}\leq n-s$ holds for $s\geq 2$, and that $q_d<n-a_d$ and $q_d+(a_i+\cdots+a_d)<n+(b_i+\cdots +b_{d-1})$ are fulfilled for $i\geq 1$.
Under the suppositions, 
we can build the IH-small resolution of singularities for $\mathbb{S}(\underline{\alpha})$ inductively as follows. For notational convenience, we choose a Schubert variety in $OG'(n, V)$ but one can read this with $OG''(n,V)$.

The first step is to pick the smallest $i$ so that $b_i\leq a_i$ and $a_{i+1}\leq b_{i+1}$. (One may let $a_0=\infty$ and $b_{d}=\infty$.) We then take any subspace $U_1$ of $V$ of dimension $q_i+a_{i+1}$ such that $V_{q_i}\subseteq U_1\subseteq V_{q_{i+1}}$. For a fixed partial flag $0\subseteq V_{q_1}\subseteq \cdots \subseteq V_{q_{i-1}}\subseteq U_1\subseteq V_{q_{i+2}}\subseteq \cdots \subseteq V_{q_d}$, the Schubert variety $\mathbb{S}(\underline{\alpha^1})$ is defined
by the closure of
$$\mathbb{S}(\underline{\alpha^1})^\circ=\{ L\;|\; \mathrm{dim}(L\cap V_{q_j})= a_1+\cdots+a_j\;\text{for}\; j\neq i, i+1, \mathrm{dim}(L\cap U_1)= a_1+\cdots a_{i+1}\}.$$
Let us consider the locus
$$Z_1=\{(U_1,U)\;|\; \overline{U_1}\in Gr(a_{i+1}, V_{q_{i+1}}/V_{q_i}), V_{q_i}\subseteq U_1, U\in \mathbb{S}(\underline{\alpha^1}) \}\subseteq OG(q_i+a_{i+1}, V)\times OG'(n, V)
$$
with the second projection $p:Gr(a_{i+1}, V_{q_{i+1}}/V_{q_i})\times OG'(n,V)\rightarrow OG'(n,V)$.
The restriction of the projection $\pi_0=p|_{Z_1}:Z_1\rightarrow \mathbb{S}(\underline{\alpha})$ on $Z_1$ is a surjective birational morphism. Given the matrix
$$
\mathfrak{H}_1:=\begin{bmatrix}
q_1& \cdots& q_i+a_{i+1}&q_{i+2} &\cdots \\
a_1&\cdots&a_i+ a_{i+1}&a_{i+2}&\cdots
\end{bmatrix}_{\underline{\alpha^1}}
$$ associated to the variety $\mathbb{S}(\underline{\alpha^1})$,
we iterate this process to get the desingularization $\pi:Z_d\rightarrow \mathbb{S}(\underline{\alpha})$ as the composition of the morphisms $\pi_i:Z_{i+1}\rightarrow Z_i$. Here $Z_d$ is a subvariety of a product of $G/Q_i$ for a certain maximal parabolic subgroups $Q_i$, i.e., 
\begin{align*}
Z_d=&\{(U_{d},U_{d-1}, \cdots, U_1, U)\;|\; \overline{U_j}\in Gr(a_{j+1}, W^{R}_{{j}}/W^{L}_{j}), W^{L}_{j}\subseteq U_j, U\in \mathbb{S}(\underline{\alpha}^d) \}
\end{align*}
in $G/Q_1\times \cdots \times G/Q_{d}\times OG'(n,V)$ for each $j\in \{1,\ldots, d\}$.

We notice that the variety $Z_d$ relies on the incidence condition $W^{L}_{j}\subseteq U_j\subseteq W^{R}_{{j}}$ at each procedure and the last incident condition for $U=U_{d+1}$ becomes $W_{d+1}^L\subset U_{d+1}\subset V$. The following example gives the manner of finding the IH-small resolution.

\begin{example}
Let $G=SO(28)$ and $V$ be a vector space of dimension $28$ over $\mathbb{C}$. Let $0\subseteq V_1\subseteq \cdots \subseteq  V_{14}$ of $V$ denote a fixed (isotropic) partial flag whose subscript indicating its dimension, $\mathrm{dim}(V_{k})=k$. We select $i=0,1,2$ in this order to have a IH-small resolution for $\mathbb{S}(\underline{\alpha})$ associated to the matrix $\mathfrak{H}=
\begin{bmatrix}
3 &6&8\\
2 & 1&1\\
\end{bmatrix}_{\underline{\alpha}}$ . To begin with, we obtain the following variety
$$Z_1=\{(U_1,U)\:|\; 0\subset U_1\subset V_3, \; U\in\mathbb{S}(\underline{\alpha}^1)\}$$
in ${OG}(2, V)\times OG'(14,V)$ where 
$\mathbb{S}(\underline{\alpha}^1)$ is the closure of the locus defined by $\mathrm{dim}(L\cap V_6)= 3$ and $\mathrm{dim}(L\cap V_8)= 4$
associated to a new matrix
$\mathfrak{H}_1=
\begin{bmatrix}
2 &6&8\\
2 & 1&1\\
\end{bmatrix}_{\underline{\alpha}^1}.$ The next stage brings us to the variety
$$Z_2=\{(U_1,U_2,U)\;|\; 0\subset U_1\subset V_3,\; U_1\subset U_2\subset V_6, \;U\in \mathbb{S}(\underline{\alpha}^2) \}$$ in $OG(2,V)\times OG(3,V)\times OG'(14,V)$ with the variety $\mathbb{S}(\underline{\alpha}^2)$ associated to  
$\mathfrak{H}_2=
\begin{bmatrix}
3 &8\\
3 &1\\
\end{bmatrix}_{\underline{\alpha}^2}.$ Finally we acquire the resolution $Z=Z_2$ as 
$$Z=\{(U_1,U_2,U_3,U)\;|\;0\subset U_1\subset V_3,\; U_1\subset U_2\subset V_6, \; U_2\subset U_3\subset V_8, \;U\in \mathbb{S}(\underline{\alpha}^3)\}$$
for $\mathbb{S}(\underline{\alpha})$ inside $OG(2,V)\times OG(3,V)\times OG(4,V)\times OG'(14,V)$. Here $\mathbb{S}(\underline{\alpha}^3)$ is associated to $\mathfrak{H}_3=
\begin{bmatrix}
4 \\
4 \\
\end{bmatrix}_{\underline{\alpha}^3}.$ As $U\in \mathbb{S}(\underline{\alpha}^3)$ implies the closure of the locus of $\mathrm{dim}(U\cap U_3)=4$, we may replace the condition by $U_3\subset U\subset V$. 
Thereupon, the IH-small resolution for the Schubert variety $\mathbb{S}(\underline{\alpha})$ becomes the locus
$$Z_d=\{ (U_1,U_2,U_3,U)\:|\; 0\subset U_1\subset V_3,\; U_1\subset U_2\subset V_6, \; U_2\subset U_3\subset V_8, \; U_3\subset U\subset V\},$$
with the projection $Z\rightarrow \mathbb{S}(\underline{\alpha})$ sending $(U_1,U_2,U_3,U)$ to $U$.

\end{example}
From now on we write $Z_{\underline{\alpha}}$ in lieu of $Z_d$. To sum up, the following theorem is the overall aftermaths pertaining to the IH-small resolution for Schubert varieties.

\begin{theorem}[Sankaran and Vanchinathan]
Let $\mathbb{S}(\underline{\alpha})\subset OG'(n,V)$ (resp. $OG''(n,V)$) be a Schubert variety associated to a strictly increasing positive sequence $\underline{\alpha}$ of length $s$ where $n-s$ is even. Let $\mathfrak{H}$ be either
$$\begin{bmatrix}
q_1 &\cdots& q_{d}\\
a_1 & \cdots& a_d\\
\end{bmatrix}_{\underline{\alpha}}\quad\text{or}\quad
\begin{bmatrix}
q_1 &\cdots& q_{d}&n\\
a_1 & \cdots& a_d&1\\
\end{bmatrix}_{\underline{\alpha}}.$$ 
Let either $\alpha_s<n-s$ or $\alpha_s=n, \alpha_{s-1}\leq n-s$ for $s\geq 2$. Let $q_d<n-a_d$ and $(a_d+\cdots+a_i)-(b_{d-1}+\cdots+b_i)<n-q_d$ for $i\geq 1$. 
Then
\begin{enumerate}
\item $Z_{\underline{\alpha}}$ is a nonsingular projective variety.
\item The projection $\pi:Z_{\underline{\alpha}}\rightarrow OG'(n,V)$ is proper whose image is $\mathbb{S}(\underline{\alpha})$ and isomorphic over $\mathbb{S}(\underline{\alpha})^\circ$, so that it is a resolution of singularities.
\item  $\pi:Z_{\underline{\alpha}}\rightarrow \mathbb{S}(\underline{\alpha})$ is the IH-small resolution.  
\end{enumerate}
\end{theorem}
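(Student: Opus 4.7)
The plan is to verify the three parts in order, leveraging the inductive tower structure $Z_{\underline{\alpha}} = Z_d \to Z_{d-1} \to \cdots \to Z_0$ built in the construction. For part (1), I would induct on the number $d$ of Grassmann bundle steps: each map $\pi_j:Z_{j+1}\to Z_j$ adjoins a factor $\overline{U}_{j+1}\in Gr(a_{j+2},W_{j+1}^R/W_{j+1}^L)$, exhibiting $\pi_j$ as the projection from a Grassmann bundle over a smooth projective base. Since Grassmann bundles preserve smoothness and projectivity, induction yields (1).

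For part (2), properness follows because $Z_{\underline{\alpha}}$ is a closed subvariety of a product of projective homogeneous spaces and $\pi$ is (the restriction of) a coordinate projection. To identify the image, I would unwind the defining incidence conditions: any $U$ in the image sits in a chain $W_j^L\subset U_j\subset W_j^R$ compatible with the fixed flag, forcing the rank inequalities $\dim(U\cap V_{q_j})\geq a_1+\cdots+a_j$ that define $\mathbb{S}(\underline{\alpha})$. Conversely, for $L\in\mathbb{S}(\underline{\alpha})^\circ$ the rank inequalities hold with equality, so $U_j=L\cap V_{q_j}$ is forced; hence the fiber over $L$ is a single reduced point. This simultaneously gives the surjection onto $\mathbb{S}(\underline{\alpha})$ and the isomorphism over $\mathbb{S}(\underline{\alpha})^\circ$.

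For part (3), which carries the main content, I would stratify $\mathbb{S}(\underline{\alpha})$ by its sub-Schubert cells $\mathbb{S}(\underline{\beta})^\circ$ for $\underline{\beta}\leq\underline{\alpha}$ and, over a representative point $p_{\underline{\beta}}$, compute $\dim\pi^{-1}(p_{\underline{\beta}})$ explicitly. The fiber is the variety of chains $(U_1,\ldots,U_d)$ satisfying the incidence conditions together with $U_d\subset L$; given the explicit pattern of how $L$ meets the flag for a point of $\mathbb{S}(\underline{\beta})^\circ$, this fiber decomposes again as an iterated Grassmann bundle, and its dimension can be read off as a combinatorial invariant of the pair $(\underline{\alpha},\underline{\beta})$. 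The codimension of the stratum follows from the Schubert dimension formula $\sum_i\alpha_i+n(n-s)-\tfrac{1}{2}n(n+1)$ stated earlier.

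The main obstacle is to verify the strict IH-small inequality
\[
2\,\dim\pi^{-1}(p_{\underline{\beta}}) < \mathrm{codim}_{\mathbb{S}(\underline{\alpha})}\mathbb{S}(\underline{\beta})
\]
for every $\underline{\beta}<\underline{\alpha}$. My plan is to reduce to the case in which $\underline{\beta}$ differs from $\underline{\alpha}$ by an elementary move on adjacent blocks of $\mathfrak{H}$, so that the inequality becomes a purely numerical statement in the entries $a_i$, $b_i$, $q_i$; the general case will then follow by composing such moves. The hypotheses of (3) --- namely $\alpha_s<n-s$ (or $\alpha_s=n$ with $\alpha_{s-1}\leq n-s$), together with $q_d<n-a_d$ and $(a_d+\cdots+a_i)-(b_{d-1}+\cdots+b_i)<n-q_d$ for $i\geq 1$ --- are exactly the numerical constraints tailored to force strictness in each such elementary move, and the careful combinatorial bookkeeping carried out in \cite{SV} then completes the argument.
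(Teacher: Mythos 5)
This theorem is imported by the paper from Sankaran--Vanchinathan \cite{SV} and is stated without proof, so there is no in-paper argument to compare against; your outline is the standard one for results of this type and is consistent with how \cite{SV} proceeds. Parts (1) and (2) of your sketch are essentially right, with two small corrections: the tower $Z^{(1)}\to\cdots\to Z^{(d+1)}$ has ordinary Grassmann bundles only for the first $d$ steps, while the last step $Z^{(d+1)}\to Z^{(d)}$ is an \emph{orthogonal} Grassmann bundle $OG(k_{d+1},(W_{d+1}^L)^\perp/W_{d+1}^L)$ (cf.\ Proposition \ref{D:prop}); this still preserves smoothness and projectivity, so the conclusion stands. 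Also, over a point $L$ of the open cell the $U_j$ are not literally $L\cap V_{q_j}$ --- they are the subspaces forced by the modified incidence conditions $V_{q_i}\subseteq U_1\subseteq V_{q_{i+1}}$, $\dim(L\cap U_1)=a_1+\cdots+a_{i+1}$, namely $U_1=V_{q_i}+(L\cap V_{q_{i+1}})$ and so on --- but the uniqueness argument goes through as you describe.

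The substantive issue is part (3). Your reformulation of IH-smallness as $2\dim\pi^{-1}(p_{\underline{\beta}})<\operatorname{codim}_{\mathbb{S}(\underline{\alpha})}\mathbb{S}(\underline{\beta})$ for every stratum is correct, and stratifying by Schubert cells is the right move. But the decisive step --- computing the fiber dimension over each $\mathbb{S}(\underline{\beta})^\circ$ as a function of the capacities and showing that the stated numerical hypotheses force the strict inequality --- is exactly what you defer to ``the careful combinatorial bookkeeping carried out in \cite{SV}.'' As written, part (3) is therefore a citation rather than a proof; in particular the proposed reduction to ``elementary moves on adjacent blocks of $\mathfrak{H}$'' is not set up (you would need to show the inequality is stable under composing such moves, which is not automatic since both sides change). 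Since the paper itself treats the theorem as a black box, this is acceptable as a gloss on the statement, but it should be presented as such rather than as an independent proof.
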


The sequences $\bold{a}$ and $\bold{b}$ from $\underline{\alpha}$ can be represented by a piecewise function $y=|x|$ in the $xy$-plane whose ascending and descending segments are $b_0,\ldots,b_{d-1}$ and $a_1,\ldots,a_d$ respectively. It has known that if we have $\mathbb{S}(\underline{\beta})\subset \mathbb{S}(\underline{\alpha})$, then the piecewise graph $y=\beta(x)$ for $\underline{\beta}$ lies below the one $y=\alpha(x)$ for $\underline{\alpha}$. The graph of these functions is depicted as Figure \ref{fig:image4}
with $\bold{b}=(3,1,2)$ and $\bold{a}=(3,2,1)$ for $\underline{\alpha}$ and $\bold{b}=(0,2,2)$ and $\bold{a}=(3,4,1)$ for $\underline{\beta}$. 
\begin{figure*}[h]

\begin{tikzpicture}[scale=0.31, x=40pt, y=30pt]
\draw[->] (-3 ,-3) -- (10.5, -3) node[right] {$x$};
     \path[draw, blue] (-2, 4) -- (1, 7) node[black, pos=0.7, left=4] {$b_0$}  ;
   \draw [fill] (-2, 4) circle [radius=0.1] node [above] {};  \draw [fill] (-1, 5) circle [radius=0.1] node [above] {};
\draw [fill] (0, 6) circle [radius=0.1] node [above] {};
\draw [fill] (1, 7) circle [radius=0.1] node [above] {};
\path[draw, blue] (1,7) -- (3, 5)  node[black, pos=0.3, right=6 , rotate=-30] {$a_1$} ;
\path[draw, blue] (3,5) -- (4, 4)  ;
\path[draw, thick, purple, dotted] (4,4) -- (4,2)   ;
\draw [fill] (2, 6) circle [radius=0.1] node [above] {};
\draw [fill] (3, 5) circle [radius=0.1] node [above] {};
\draw [fill] (4, 4) circle [radius=0.1] node [above] {};
\path[draw,blue] (4,4) -- (5, 5) node[black, pos=1.2, above left=0, rotate= 50] {$b_1$} ;
\draw [fill] (5, 5) circle [radius=0.1] node [above] {};
\path[draw, blue] (5,5) -- (6,4)   ;
\path[draw, blue] (6,4) -- (7,3)  node[black, pos=-0.8,  right=7,  rotate= -30] {$a_2$};
\path[draw, thick, purple, dotted] (7,3) -- (7,-1)   ;
\draw [fill] (6, 4) circle [radius=0.1] node [above] {};
\draw [fill] (7, 3) circle [radius=0.1] node [above] {};
\path[draw, blue] (7,3) -- (9, 5) node[black, pos=0.8, above left=0, rotate= 45] {$b_2$}  ;
\draw [fill] (8, 4) circle [radius=0.1] node [above] {};
\draw [fill] (9, 5) circle [radius=0.1] node [above] {};
\path[draw, blue] (9,5) -- (10,4) node[black, pos=-0.2,  right=4,  rotate= -20] {$a_3$} ;
\path[draw, thick, purple, dotted] (10,4) -- (10,0)   ;
\draw [fill] (10, 4) circle [radius=0.1] node [above, right=4] {$y=\alpha(x)$};
 \path[draw, red] (-2,4) -- (1, 1)  ;
\draw [fill] (-1, 3) circle [radius=0.1] node [above] {};
\draw [fill] (0, 2) circle [radius=0.1] node [above] {};
\draw [fill] (1, 1) circle [radius=0.1] node [above] {};
\draw [fill] (2, 2) circle [radius=0.1] node [above] {};
\path[draw, red] (1,1) -- (3, 3)  ;
\draw [fill] (3, 3) circle [radius=0.1] node [above] {};
\path[draw, red] (3,3) -- (7, -1)  ;
\draw [fill] (4, 2) circle [radius=0.1] node[black, below] {\textcolor{purple}{$c_1$}} ;
\draw [fill] (5, 1) circle [radius=0.1] node [above] {};
\draw [fill] (6, 0) circle [radius=0.1] node [above] {};
\draw [fill] (7, -1) circle [radius=0.1] node[black, below] {\textcolor{purple}{$c_2$}};
\path[draw, red] (7,-1) -- (9, 1)  ;
\draw [fill] (8, 0) circle [radius=0.1] node [above] {};
\draw [fill] (9, 1) circle [radius=0.1] node [above] {};
\draw [fill] (10, 0) circle [radius=0.1]  node[black, below] {\textcolor{purple}{$c_3$}} ;
 \path[draw, red] (9, 1) -- (10, 0) ;
 \draw [fill] (10, 0) circle [radius=0.1] node [above, right=4] {$y=\beta(x)$};
\end{tikzpicture}
  \caption{$(\mathfrak{H},\mathfrak{K})$-sequence}
    \label{fig:image4}
    \vspace*{-2mm}

\end{figure*}
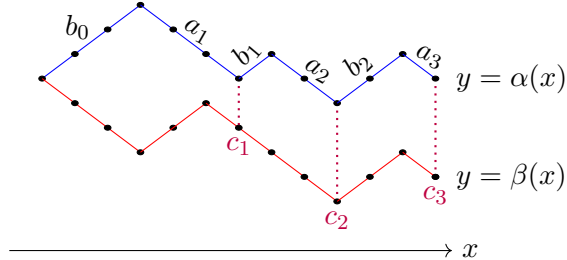

A {\it capacity} is a sequence $\bold{c}=(c_0,\ldots,c_d)$ of integers encoding $1/\sqrt{2}$ of the distance measured vertically from the local minimums to the graph of $y=\beta(x)$. It is advantageous to set $c_0=0$. In Figure \ref{fig:image4}, the capacity is $\bold{c}=(0,2,4,4).$

Let $\pi:Z_{\underline{\alpha}}\rightarrow \mathbb{S}(\underline{\alpha})$ be the IH-small resolution. In the event that $U$ is a point inside of the Schubert cell $\mathbb{S}(\underline{\beta})^\circ$, the Euler characteristic $d_{\mathfrak{H},\bold{c}}:=\chi(\pi^{-1}(U))$ of the fiber over a point $U\in \mathbb{S}(\underline{\beta})^\circ$ is obtained by the following formula.
%
\begin{theorem}[Sankaran and Vanchinathan]
Let $\mathfrak{H}:=
\begin{bmatrix}
q_1 &\cdots& q_{d}\\
a_1 & \cdots& a_d\\
\end{bmatrix}_{\underline{\alpha}}\;\text{or}\;
\begin{bmatrix}
q_1 &\cdots& q_{d}&n\\
a_1 & \cdots& a_d&1\\
\end{bmatrix}_{\underline{\alpha}}
$ with a sequence $\bold{b}=(b_0,\ldots, b_{d-1})$ and capacity $\bold{c}=(c_0,\ldots, c_d)$. Suppose that $i$ is the smallest integer such that $b_i\leq a_i$ and $a_{i+1}\leq b_{i+1}$ with $\mathfrak{H}_1$ as above. Then we have 
\begin{equation}\label{eqn:KL(1)}
d_{\mathfrak{H},\bold{c}}=\sum_{t\geq 0}\binom{a_{i+1}-c_i+c_{i+1}}{c_{i+1}-t}\binom{b_i+c_i-c_{i+1}}{c_i-t}d_{\mathfrak{H}_1,\bold{c}(t)}
\end{equation}
where $\bold{c}(t)=(c_0,\ldots, c_{i-1}, t, c_{i+2}, \ldots, c_d)$.
\end{theorem}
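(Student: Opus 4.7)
The plan is to prove the formula by induction on $d$, the number of columns of the matrix $\mathfrak{H}$, exploiting the inductive structure of the IH-small resolution itself: $Z_{\underline{\alpha}}$ is built by adjoining one new Grassmannian factor to a resolution $Z_{\underline{\alpha}^1}$ whose defining matrix $\mathfrak{H}_1$ has shorter length in the appropriate sense. The Euler characteristic of the fiber $\pi^{-1}(U)$ over $U \in \mathbb{S}(\underline{\beta})^\circ$ will be computed by stratifying $\pi^{-1}(U)$ according to the relative position of the first factor $U_1$ with respect to $U$, then summing over the strata using the additivity and multiplicativity of the topological Euler characteristic.

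First, I would factor $\pi : Z_{\underline{\alpha}} \to \mathbb{S}(\underline{\alpha})$ through the intermediate resolution $Z_{\underline{\alpha}^1}$ as constructed in the section: forgetting the final incidence with $U$ yields the first step $Z_1 \to \mathbb{S}(\underline{\alpha})$, which records only the choice of $U_1$ with $V_{q_i} \subseteq U_1 \subseteq V_{q_{i+1}}$ and a point of $\mathbb{S}(\underline{\alpha}^1)$, and the remaining data fits into the IH-small resolution $Z_{\underline{\alpha}^1} \to \mathbb{S}(\underline{\alpha}^1)$ associated to the matrix $\mathfrak{H}_1$ (with the flag updated by inserting $U_1$ in place of the block $V_{q_i}\subseteq V_{q_{i+1}}$). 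The fiber $\pi^{-1}(U)$ can therefore be described as the set of pairs $(U_1, z)$ where $U_1 \in Gr(a_{i+1},V_{q_{i+1}}/V_{q_i})$ and $z \in \pi_1^{-1}(U)$, with the fiber $\pi_1^{-1}(U)$ taken for the resolution $\pi_1$ attached to the flag containing $U_1$.

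Next, I would stratify the fiber by the discrete invariant $t := \dim(U \cap U_1) - \dim(U \cap V_{q_i})$, which measures how much of $U_1$ lies inside $U$. For each admissible value of $t$, the point $U$ sits in the Schubert cell $\mathbb{S}(\underline{\beta}^1)^\circ$ with respect to the new matrix $\mathfrak{H}_1$, with capacity exactly $\bold{c}(t) = (c_0,\ldots,c_{i-1},t,c_{i+2},\ldots,c_d)$, so that the contribution of the $z$-factor is $d_{\mathfrak{H}_1,\bold{c}(t)}$ by the inductive hypothesis (or by definition, as the fiber of the smaller resolution). It remains to count, in the Euler-characteristic sense, the locus of $U_1$'s yielding a prescribed $t$: these are the $a_{i+1}$-dimensional subspaces of $V_{q_{i+1}}$ containing $V_{q_i}$, containing a $t$-dimensional subspace of $U \cap V_{q_{i+1}}$, and meeting a chosen complement in the prescribed way. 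Passing to the quotient $V_{q_{i+1}}/V_{q_i}$, this locus is isomorphic to a product $Gr(c_{i+1}-t, a_{i+1}-c_i+c_{i+1}) \times Gr(c_i-t, b_i+c_i-c_{i+1})$ up to an affine fibration, since the two parameters — the part of $U_1$ contained in $U \cap V_{q_{i+1}}$ and the part transverse to it — decouple. Its Euler characteristic is then the product $\binom{a_{i+1}-c_i+c_{i+1}}{c_{i+1}-t}\binom{b_i+c_i-c_{i+1}}{c_i-t}$.

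Finally, I would assemble: since the stratification is locally trivial with constant Euler characteristic on each stratum, additivity gives $\chi(\pi^{-1}(U)) = \sum_{t\ge 0}\chi(\text{stratum}_t)\cdot d_{\mathfrak{H}_1,\bold{c}(t)}$, which is precisely the claimed formula. The main obstacle, and the step that demands the most care, is the identification of the stratum of valid $U_1$'s in Step 3: one must verify that the two independent parameters describing the position of $U_1$ (one transverse to $U\cap V_{q_{i+1}}$ inside the quotient $V_{q_{i+1}}/V_{q_i}$, and one inside it) indeed factor the stratum as a product of Grassmannian-type cells with the exact dimensions read off from the capacity, and that the isotropy constraints imposed by the nondegenerate quadratic form on $V$ do not alter this count since the relevant subspaces lie in a single isotropic flag of $V$. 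Once this bookkeeping is settled, the inductive step is immediate.
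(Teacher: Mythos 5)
The paper does not actually prove this theorem: it is quoted from Sankaran--Vanchinathan \cite{SV}, so there is no in-paper argument to compare against. Your outline is the standard (and, as far as one can tell, the intended) way to establish the recursion: fiber $\pi^{-1}(U)$ over the space of admissible $U_1$, stratify by the discrete invariant recording $\dim(U\cap U_1)$, identify each stratum of $U_1$'s as an affine fibration over a product of two Grassmannians inside $V_{q_{i+1}}/V_{q_i}$, and conclude by additivity and multiplicativity of $\chi$ together with $\chi(Gr(k,m))=\binom{m}{k}$. Your remark about isotropy is also correct: since $V_{q_{i+1}}\subseteq V_n$, every admissible $U_1$ is automatically isotropic and the count is an ordinary Grassmannian count (this is exactly the content of Proposition \ref{D:prop}).

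There is, however, one concrete slip you must repair. Your stratifying parameter $t:=\dim(U\cap U_1)-\dim(U\cap V_{q_i})$ is not the $t$ appearing in the formula. The capacity $c_j$ is the excess $\dim(U\cap V_{q_j})-(a_1+\cdots+a_j)$, so the capacity entry attached to the merged column of $\mathfrak{H}_1$ (whose flag element is $U_1$, with required intersection $a_1+\cdots+a_{i+1}$) is $t=\dim(U\cap U_1)-(a_1+\cdots+a_{i+1})$; your $t$ exceeds this by $a_{i+1}-c_i$. With the corrected $t$, and writing $A$ for the image of $U\cap V_{q_{i+1}}$ in $W:=V_{q_{i+1}}/V_{q_i}$, one gets $\dim A=a_{i+1}+c_{i+1}-c_i$, $\dim W/A=b_i+c_i-c_{i+1}$, and $\dim(\overline{U_1}\cap A)=a_{i+1}-c_i+t$, so the stratum is (up to an affine bundle) $Gr(c_{i+1}-t,A)\times Gr(c_i-t,W/A)$ and its Euler characteristic is exactly the product of the two binomials. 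As written, your identification of the stratum with these Grassmannians is inconsistent with your own definition of $t$, and the claim that $U$ lies in the cell of $\mathbb{S}(\underline{\alpha}^1)$ with capacity vector $\bold{c}(t)$ is false for your $t$. This is bookkeeping rather than a missing idea, but the induction does not literally close until it is fixed.
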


We remark that the IH-small resolution $\pi$ enables the function $d_{\mathfrak{H},\bold{c}}$ on the Schubert variety $\mathbb{S}(\underline{\alpha})$ to agree on the local Euler obstruction. The following proposition is useful for the computation of the Chern-Mather classes of Schubert varieties in the even orthogonal Grassmannians. 

\begin{proposition}\label{prop:equiv}
Let $\pi:Z_{\underline{\alpha}}\rightarrow \mathbb{S}(\underline{\alpha})$ be a IH-small resolution of a Schubert variety $\mathbb{S}(\underline{\alpha})$ in $OG'(n,V)$ (resp. $OG''(n,V)$). The following statements are equivalent.
\begin{enumerate}
        \item A point $U\in \mathbb{S}(\underline{\beta})^\circ\subset \mathbb{S}(\underline{\alpha})$ is smooth in $\mathbb{S}(\underline{\alpha})$
        \item $\pi^{-1}(U)$ is a point \label{a}
        \item $d_{\mathfrak{H},\bold{c}}=1$
          \item The capacity $\bold{c}$ is the sequence $(0,\ldots, 0)$
    \end{enumerate}
\end{proposition}
\begin{proof}
We prove the direction of $(4)$ to $(3)$, as the rest can be found in \cite[Proposition 4.2.6]{Benjamin}. Suppose $\bold{c}=(0,\ldots, 0)$. It follows from the construction that $t$ must be $0$. In this way $d_{\mathfrak{H},\bold{c}}=d_{\mathfrak{H}_1,\bold{c}(0)}$ via
$$\binom{a_j+c_j-c_{j-1}}{c_j-0}=\binom{b_{j-1}-c_j+c_{j-1}}{c_{j-1}-0}=1.$$
By induction, $d_{\mathfrak{H}_1,\bold{c}(0)}=1$ is deduced by $(c_0,\ldots,c_{j-1},0,c_{j+1},\ldots, c_d)=(0,\ldots, 0)$. Henceforth, the result follows.
\end{proof}

\section{Chern-Mather classes for Schubert varieties in the orthogonal Grassmannian $OG(n,\mathbb{C}^{2n})$}\label{sec4}
In this section we formulate an integration and its computation for the Schubert expansion in Chern-Mather class of Schubert varieties for type D in analogy to the version of type A by Jones \cite{Benjamin}. Our type D Chern-Mather class formula involves wedge products and Pfaffians (or Schur P-functions) that are a major different part from type A.

\subsection{Total Chern class of the IH-small resolutions}\label{s4.1}

Recall $V=\mathbb{C}^{2n}$ and the IH-small resolution $Z_{\underline{\alpha}}=\{(U_1,U_2,\ldots, U_d, U_{d+1})\;|\; W_i^L\subset U_i\subset W_i^R, W_{d+1}^L\subset U_{d+1}\subset V\}\subset X:=\prod_{j=1}^{d+1} OG(k_j,V)$ for $1\leq i\leq d$ for a Schubert variety $\mathbb{S}(\underline{\alpha})$ in the even orthogonal Grassmannian $OG'(n,V)$ (or $OG''(n,V)$), where $k_{d+1}=n$ with the projection map $pr_i:X\rightarrow OG(k_i,V)$. Let $\underline{V}_i$ be the isotropic subbundle of rank $i$ on $X$ whose fiber is $V_i$ from $V_\bullet$ and $\underline{U}_i$ the universal isotropic subbundle of $\underline{V}$ on $OG(k_i,V)$. By abuse of notation the subbundle can be seen as the pullback under the projection $pr_i$ to $X$ and $Z_{\underline{\alpha}}$.

We define $\underline{W}_i^L$ and $\underline{W}_i^R$ to be the isotropic subbundles of $\underline{V}$ on $X$, with a fiber over a point $U_\bullet$ as $W_i^L$ and $W_i^R$ respectively. Since $Z_{\underline{\alpha}}$ has the incidence relations $W_i^L\subset U_i\subset W_i^R$ for $1\leq i\leq d$ and $W_{d+1}^L\subset U_{d+1}\subset V$, there must be corresponding incidence conditions for the isotropic bundles on $Z_{\underline{\alpha}}$ as $\underline{W}_i^L\subset\underline{U}_i\subset\underline{W}_i^R$ for $1\leq i\leq d$ and $\underline{W}_{d+1}^L\subset \underline{U}_{d+1}\subset \underline{V}$, in which $\underline{W}_i^L$ and $\underline{W}_i^R$ are either an isotropic subbundle $\underline{V}_i$ or a universal subbundle $\underline{U}_i$.

Let $X^{(j)}:=\prod_{i=1}^jOG(k_i,V)$ and denote by $\rho_j:X\rightarrow X^{(j)}$ the projection map assigning $(U_1,\ldots, U_{d+1})$ to $(U_1,\ldots, U_j)$ for $1\leq j\leq d+1$. We set $Z^{(j)}=\rho_j(Z_{\underline{\alpha}})$, having the natural projections $Z^{(j)}\rightarrow Z^{(l)}$ for $j>l$. Above all, each $Z^{(j)}$ can be viewed as a (ordinary or orthogonal) Grassmannian bundle on $Z^{(j-1)}$ as follows. 
\begin{proposition}\label{D:prop}
Let $l_i=k_i-\mathrm{dim}(W_i^L)$.
For $2\leq j\leq d,$ the natural projections $Z^{(j)}\rightarrow Z^{(j-1)}$ is a Grassmannian bundle with a fiber identified with $Gr(l_j,W_j^R/W_j^L)$. In case of $j=d+1$, the fiber of the map $Z^{(d+1)}\rightarrow Z^{(d)}$ can be considered as $OG(k_{d+1},(W_{d+1}^L)^\perp/W_{d+1}^L)$. Furthermore $Z^{(1)}$ is isomorphic to the Grassmannian $Gr(l_1,W_1^R/W_1^L)$. 
\end{proposition}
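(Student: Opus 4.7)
The plan is to induct on $j$, realizing each $Z^{(j)}$ as a relative (ordinary or isotropic) Grassmannian bundle over $Z^{(j-1)}$ built from the subbundles $\underline{W}_j^L \subset \underline{W}_j^R$ of $\underline{V}$. The first preparatory observation is that, by the inductive construction of Sankaran and Vanchinathan recalled in Section~3.2, each $W_j^L$ and $W_j^R$ is either a fixed flag subspace $V_{q_i}$ or one of the previously chosen subspaces $U_i$ with $i < j$. Consequently, after pulling back to $Z^{(j-1)}$, both $\underline{W}_j^L$ and $\underline{W}_j^R$ are well-defined subbundles of the trivial bundle $\underline{V}$ of constant ranks $\dim W_j^L$ and $\dim W_j^R$, sitting in an inclusion $\underline{W}_j^L \subset \underline{W}_j^R$; in particular both are isotropic for all $j$, since at every stage only isotropic subspaces are introduced.

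For the base case $j=1$, the data $W_1^L$ and $W_1^R$ are fixed subspaces of the reference flag $V_\bullet$, so $Z^{(1)}$ is simply the set of $k_1$-dimensional subspaces $U_1$ with $W_1^L \subset U_1 \subset W_1^R$, which under the standard bijection $U_1 \leftrightarrow U_1/W_1^L$ is identified with $Gr(l_1, W_1^R/W_1^L)$, as claimed.

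For $2 \le j \le d$, the fiber of $Z^{(j)} \to Z^{(j-1)}$ over a point is the variety of $k_j$-dimensional subspaces $U_j$ satisfying $W_j^L \subset U_j \subset W_j^R$, which is again $Gr(l_j, W_j^R/W_j^L)$ via the same quotient bijection. Globally, $\underline{W}_j^R/\underline{W}_j^L$ is a vector bundle of constant rank on $Z^{(j-1)}$, and $Z^{(j)}$ is the associated relative Grassmannian $Gr(l_j, \underline{W}_j^R/\underline{W}_j^L)$; local triviality is inherited from the standard local triviality of Grassmannian bundles.

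For the final step $j=d+1$, the condition reduces to $W_{d+1}^L \subset U_{d+1}\subset V$ together with the isotropy of $U_{d+1}$. Since $W_{d+1}^L$ is itself isotropic, isotropy of $U_{d+1}$ is equivalent to $U_{d+1} \subset (W_{d+1}^L)^\perp$, and the quotient $(W_{d+1}^L)^\perp/W_{d+1}^L$ inherits a nondegenerate quadratic form from $V$; under this, the fiber is identified with the even orthogonal Grassmannian $OG(k_{d+1}, (W_{d+1}^L)^\perp/W_{d+1}^L)$ of maximal isotropic subspaces (the correct family being determined by the parity condition on $n-s$ built into the construction). Relatively, $(\underline{W}_{d+1}^L)^\perp/\underline{W}_{d+1}^L$ is a vector bundle of constant rank on $Z^{(d)}$ equipped with a fiberwise nondegenerate quadratic form, so $Z^{(d+1)}$ is the associated relative orthogonal Grassmannian bundle. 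The principal technical point in the whole argument is keeping track, through the inductive construction, that each $\underline{W}_j^L, \underline{W}_j^R$ is indeed a constant-rank isotropic subbundle on $Z^{(j-1)}$; once that bookkeeping is in place, the Grassmannian-bundle structure is automatic.
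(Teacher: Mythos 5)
Your proposal is correct and follows essentially the same route as the paper: identify each fiber as the set of $U_j$ with $W_j^L\subset U_j\subset W_j^R$, note that for $j\le d$ the isotropy of $U_j$ is automatic (you via the isotropy of $W_j^R$, the paper via $W_j^R\subseteq V_n$), so the fiber is an ordinary Grassmannian, and for $j=d+1$ use $U_{d+1}\subset(W_{d+1}^L)^\perp$ to land in the orthogonal Grassmannian of the quotient. The extra bookkeeping you do (constant rank of the bundles $\underline{W}_j^L\subset\underline{W}_j^R$ over $Z^{(j-1)}$) is a mild strengthening of the paper's fiberwise argument and is harmless.
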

\begin{proof}
It is known from the construction that  
$$Z^{(j)}=\{(U_1,\ldots, U_j)\;|\; W_i^L\subset U_i\subset W_i^R\;\text{for}\;1\leq i\leq j\}\subset X^{(j)}.$$
 By the constraint $k_j+a_{j+1}< n$ for $2\leq j\leq d$ of the IH-small resolution, all $W_i^R$ are subspaces of the maximal isotropic subspace $V_n$ which is trivial as isomorphic to $\mathbb{C}^n$. As a result, the fiber 
 $$\{U_j\;|\; W_j^L\subset U_j\subset W_j^R\}\subset OG(k_j,V)$$
 of $Z^{(j)}\rightarrow Z^{(j-1)}$ over a point $(U_1,\ldots, U_{j-1})\in Z^{(j-1)}$ is the ordinary Grassmannian $Gr(l_j,W_j^R/W_j^L)$.
 
 When it comes to the projection map $Z^{(d+1)}\rightarrow Z^{(d)}$, we have the fiber as 
 \begin{equation}\label{typeD:perp}
 \{U_{d+1}\;|\; W_{d+1}^L\subset U_{d+1}\subset V\}.
 \end{equation}
Since $U_{d+1}$ is isotropic, we earn the inclusion $U_{d+1}\subset (W_{d+1}^L)^\perp$ automatically. Consequently \eqref{typeD:perp} must be the orthogonal Grassmannian $OG(l_{d+1}, (W_{d+1}^L)^\perp/ W_{d+1}^L)$.

 Knowing that $W_1^L$ and $W_1^R$ are also subspaces of $V_n$, the last part of the proposition is verified.
 \end{proof}

Let $\underline{E}\rightarrow Y$ be a rank $n$ vector bundle of a smooth variety $Y$. Let $\pi:Gr(k,\underline{E})\rightarrow Y$ denote the ordinary Grassmannian bundle of $k$-dimensional subspaces of the fibers of $\underline{E}$ over $Y$. Since all the fiber of $\pi$ is smooth, $Gr(k,\underline{E})$ is nonsingular. The cokernel of the imbedding of $\pi^{-1}TY$ in $TGr(k,\underline{E})$ is the relative tangent bundle $T_{Gr(k,\underline{E})/Y}$ over $Y$, producing
$$0\longrightarrow \pi^{-1}TY\longrightarrow TGr(k,\underline{E})\longrightarrow T_{Gr(k,\underline{E})/Y}\longrightarrow 0.$$
Let $\underline{S}$ be the subbundle of the pullback $\pi^{-1}(\underline{E})$ and let $\underline{Q}$ be the quotient bundle on $Gr(k,\underline{E})$. 
Then by \cite[B.5.8]{Fulton} the relative tangent bundle $T_{Gr(k,\underline{E})/Y}$ is canonically isomorphic to 
\begin{equation}\label{D:eq}
\mathrm{Hom}(\underline{S},\underline{Q})\cong \underline{S}^\vee\otimes \underline{Q}.
\end{equation}
This isomorphism was used by Jones \cite{Benjamin} for type A. When $Y$ is a point, the Grassmannian bundle becomes the classical Grassmannian \cite[Section 6]{Fulton92}. Principally, we can make a connection with a classical geometry about tangent spaces of Grassmannian $Gr(k,E)$ of $k$-planes in a vector space $E$ of dimension $n$ over $\mathbb{C}$: for a subspace $\Lambda$ in $Gr(k,E)$, the tangent space of $Gr(k,E)$ at $\Lambda$ is naturally isomorphic to $\mathrm{Hom}(\Lambda,E/\Lambda)=\Lambda^\vee\otimes E/\Lambda$ \cite[Example 16.1]{Harris}.

Likewise of the ordinary Grassmannian case, we consider a vector bundle $\underline{\mathscr{E}}\rightarrow Y$ of rank $2n$ over a smooth variety $Y$ where $\underline{\mathscr{E}}$ is equipped with the quadratic form $q$ on it. Let $OG(k,\underline{\mathscr{E}})$ be the orthogonal Grassmannian bundle of dimension $k$ subspaces in the fibers of $\underline{\mathscr{E}}$ over $Y$. Let $p:OG(k,\underline{\mathscr{E}})\rightarrow Y$ be a projection map from $OG(k,\underline{\mathscr{E}})$ on $Y$ and $\underline{\mathscr{S}}$ the rank $k$ isotropic subbundle of $p^{-1}(\underline{\mathscr{E}})$. Then we obtain the following general fact regarding the relative tangent bundle $T_{OG(k,\underline{\mathscr{E}})/Y}$.

\begin{lemma}\label{D:lem}
The relative tangent bundle $T_{OG(k,\underline{\mathscr{E}})/Y}$ fits into a split exact sequence
$$0\rightarrow \underline{\mathscr{S}}^\vee\otimes\underline{\mathscr{S}}^\perp/\underline{\mathscr{S}}\rightarrow T_{OG(k,\underline{\mathscr{E}})/Y}\rightarrow \mathrm{\wedge}^2\;\underline{\mathscr{S}}^\vee\rightarrow 0,$$
so that we have
$$T_{OG(k,\underline{\mathscr{E}})/Y}\cong ( \underline{\mathscr{S}}^\vee\otimes\underline{\mathscr{S}}^\perp/\underline{\mathscr{S}})\oplus {\wedge}^2\;\underline{\mathscr{S}}^\vee.$$
\end{lemma}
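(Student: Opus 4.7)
The plan is to infinitesimally cut out the orthogonal Grassmannian bundle inside the ambient ordinary Grassmannian bundle. The relative tangent bundle of the ambient Grassmannian bundle is already known to be $T_{Gr(k,\underline{\mathscr{E}})/Y}\cong\underline{\mathscr{S}}^\vee\otimes(\underline{\mathscr{E}}/\underline{\mathscr{S}})$ by the bundle version of (\ref{D:eq}), and $OG(k,\underline{\mathscr{E}})\hookrightarrow Gr(k,\underline{\mathscr{E}})$ is the zero locus of the quadratic form $q$ restricted to the tautological subbundle. Thus $T_{OG(k,\underline{\mathscr{E}})/Y}$ arises by linearizing this defining equation.

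First I would carry out the linearization fiberwise. A tangent vector at an isotropic $\Lambda\subset\underline{\mathscr{E}}_y$ is a map $\phi\colon\Lambda\to\underline{\mathscr{E}}_y/\Lambda$, corresponding to the first-order deformation $(\mathrm{id}+\varepsilon\tilde{\phi})(\Lambda)$ for a lift $\tilde{\phi}$. Expanding $q(v+\varepsilon\tilde{\phi}(v))$ modulo $\varepsilon^2$ and using $q|_\Lambda=0$, the isotropy condition to first order becomes $B(v,\tilde{\phi}(w))+B(w,\tilde{\phi}(v))=0$ for all $v,w\in\Lambda$, where $B$ is the polar form of $q$. Since $\Lambda\subset\Lambda^\perp$, this expression depends only on $\phi$ and not on the choice of lift, so it assigns to $\phi$ an element $\mathrm{sym}(\phi)\in\mathrm{Sym}^2\Lambda^\vee$ whose vanishing cuts out $T_\Lambda OG(k,\underline{\mathscr{E}}_y)$.

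To globalize, I would use the canonical short exact sequence
$$0\longrightarrow\underline{\mathscr{S}}^\perp/\underline{\mathscr{S}}\longrightarrow\underline{\mathscr{E}}/\underline{\mathscr{S}}\longrightarrow\underline{\mathscr{S}}^\vee\longrightarrow 0$$
induced by $B$ and the isotropy of $\underline{\mathscr{S}}$. Tensoring with $\underline{\mathscr{S}}^\vee$ and composing with the projection $\underline{\mathscr{S}}^\vee\otimes\underline{\mathscr{S}}^\vee\twoheadrightarrow\mathrm{Sym}^2\underline{\mathscr{S}}^\vee$ (valid in characteristic zero) produces a bundle surjection $\underline{\mathscr{S}}^\vee\otimes(\underline{\mathscr{E}}/\underline{\mathscr{S}})\twoheadrightarrow\mathrm{Sym}^2\underline{\mathscr{S}}^\vee$ extending the pointwise $\mathrm{sym}$. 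Its kernel is $T_{OG(k,\underline{\mathscr{E}})/Y}$, and a diagram chase identifies this kernel as an extension with sub $\underline{\mathscr{S}}^\vee\otimes\underline{\mathscr{S}}^\perp/\underline{\mathscr{S}}$ (the kernel of $\underline{\mathscr{S}}^\vee\otimes(\underline{\mathscr{E}}/\underline{\mathscr{S}})\to\underline{\mathscr{S}}^\vee\otimes\underline{\mathscr{S}}^\vee$) and quotient $\wedge^2\underline{\mathscr{S}}^\vee$ (the complement of $\mathrm{Sym}^2\underline{\mathscr{S}}^\vee$ inside $\underline{\mathscr{S}}^\vee\otimes\underline{\mathscr{S}}^\vee$). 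This gives the exact sequence of the lemma, and the splitting comes from the canonical characteristic-zero decomposition $\underline{\mathscr{S}}^\vee\otimes\underline{\mathscr{S}}^\vee\cong\mathrm{Sym}^2\underline{\mathscr{S}}^\vee\oplus\wedge^2\underline{\mathscr{S}}^\vee$.

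The main checkpoint I anticipate is verifying surjectivity of the symmetrization map, equivalently the claim that $OG(k,\underline{\mathscr{E}})$ has the expected codimension $\binom{k+1}{2}$ inside $Gr(k,\underline{\mathscr{E}})$; this follows from the smoothness of the fiber $OG(k,2n)$ and a dimension count against the ranks $k(2n-2k)$ and $\binom{k}{2}$ of the two summands, which sum to $\dim OG(k,2n)$. Once the sequence is established, Whitney multiplicativity yields the identity $c(T_{OG(k,\underline{\mathscr{E}})/Y})=c(\underline{\mathscr{S}}^\vee\otimes\underline{\mathscr{S}}^\perp/\underline{\mathscr{S}})\cdot c(\wedge^2\underline{\mathscr{S}}^\vee)$, which is the form actually used in the paper's Chern-Mather computations.
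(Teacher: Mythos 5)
Your proposal is correct and follows essentially the same route as the paper's proof: both realize $T_{OG(k,\underline{\mathscr{E}})/Y}$ inside $T_{Gr(k,\underline{\mathscr{E}})/Y}\cong\underline{\mathscr{S}}^\vee\otimes\underline{\mathscr{E}}/\underline{\mathscr{S}}$ as the locus where the composite with $\underline{\mathscr{E}}/\underline{\mathscr{S}}\to\underline{\mathscr{E}}/\underline{\mathscr{S}}^\perp\cong\underline{\mathscr{S}}^\vee$ lands in $\wedge^2\underline{\mathscr{S}}^\vee$ (equivalently, has vanishing $\mathrm{Sym}^2$-component), with kernel $\underline{\mathscr{S}}^\vee\otimes\underline{\mathscr{S}}^\perp/\underline{\mathscr{S}}$ and quotient $\wedge^2\underline{\mathscr{S}}^\vee$. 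Your fiberwise linearization of $q$ and the dimension count simply spell out what the paper delegates to the cited arguments of Park and Harris.
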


\begin{proof}
We provide a proof of what seems to be this folklore lemma, inspired by \cite[Lemma 3.1]{Park} which is originated from Harris \cite[Example 16.1]{Harris}.

Let $Gr(k,\underline{\mathscr{E}})\rightarrow Y$ be the (ordinary) Grassmannian bundle of dimension $k$ subspaces in the fibers of $\underline{\mathscr{E}}$ over $Y$. Then we have $T_{OG(k,\underline{\mathscr{E}})/Y}\hookrightarrow T_{Gr(k,\underline{\mathscr{E}})/Y}=\underline{\mathscr{S}}^\vee\otimes\underline{\mathscr{E}}/\underline{\mathscr{S}}$. We define a map 
$$\phi:\underline{\mathscr{S}}^\vee\otimes\underline{\mathscr{E}}/\underline{\mathscr{S}}\rightarrow \underline{\mathscr{S}}^\vee\otimes \underline{\mathscr{S}}^\vee$$
of vector bundles by $\phi=\iota\otimes \psi$ for the identity map $\iota:\underline{\mathscr{S}}^\vee\rightarrow\underline{\mathscr{S}}^\vee$ and the map
$$\psi:\underline{\mathscr{E}}/\underline{\mathscr{S}}\rightarrow \underline{\mathscr{E}}/\underline{\mathscr{S}}^\perp\cong\underline{\mathscr{S}}^\vee.$$
Since $T_{OG(k,\underline{\mathscr{E}})/Y}$ is the inverse image $\phi^{-1}(\wedge^2\;\underline{\mathscr{S}}^\vee)$ of the wedge square $\wedge^2\;\underline{\mathscr{S}}^\vee$ from the symmetric form associated to the quadratic form $q$ for $\underline{\mathscr{E}}$, we have a surjective restriction map 
$$\phi|_{T_{OG(k,\underline{\mathscr{E}})/Y}}:T_{OG(k,\underline{\mathscr{E}})/Y}\rightarrow \wedge^2\;\underline{\mathscr{S}}^\vee$$
of $\phi$ to $T_{OG(k,\underline{\mathscr{E}})/Y}$ for $\wedge^2\;\underline{\mathscr{S}}^\vee\subset \underline{\mathscr{S}}^\vee\otimes \underline{\mathscr{S}}^\vee$. As the kernel $\mathrm{ker}(\phi)=\underline{\mathscr{S}}^\vee\otimes\underline{\mathscr{S}}^\perp/\underline{\mathscr{S}}$ of $\phi$ is included in $T_{OG(k,\underline{\mathscr{E}})/Y}$ by its definition, putting all together proves the lemma.
\end{proof}

In the same manner of \cite[Section 6]{Fulton92} we have a classical version for the tangent spaces of orthogonal Grassmannian at a point.

\begin{corollary}[]
For $k\leq n$, let $OG(k, \mathscr{E})$ be a orthogonal Grassmannian of isotropic $k$-planes in a vector space $\mathscr{E}$ of dimension $2n$. Let $\Lambda\in OG(k,\mathscr{E})$ be a $k$-plane. Then we have a natural identification
$$T_\Lambda OG(k,\mathscr{E})=(\Lambda^\vee\otimes\Lambda^\perp/\Lambda)\oplus\wedge^2\Lambda^\vee.$$
\end{corollary}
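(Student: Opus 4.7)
The plan is to obtain the corollary as an immediate specialization of Lemma \ref{D:lem} to the case where the base $Y$ is a single point $\mathrm{Spec}\,\mathbb{C}$. In that setting the rank $2n$ bundle $\underline{\mathscr{E}}$ reduces to the vector space $\mathscr{E}$ (with its nondegenerate quadratic form), the orthogonal Grassmannian bundle $OG(k,\underline{\mathscr{E}})$ becomes the classical orthogonal Grassmannian $OG(k,\mathscr{E})$, and because $TY=0$ the relative tangent bundle $T_{OG(k,\underline{\mathscr{E}})/Y}$ coincides with the absolute tangent bundle $TOG(k,\mathscr{E})$. The tautological isotropic subbundle $\underline{\mathscr{S}}$ has fiber over the point $\Lambda \in OG(k,\mathscr{E})$ equal to $\Lambda$ itself, so its orthogonal complement $\underline{\mathscr{S}}^\perp$ has fiber $\Lambda^\perp$.

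Given this setup, I would pass to fibers in the split exact sequence furnished by Lemma \ref{D:lem}:
$$0 \longrightarrow \underline{\mathscr{S}}^\vee \otimes \underline{\mathscr{S}}^\perp/\underline{\mathscr{S}} \longrightarrow T_{OG(k,\underline{\mathscr{E}})/Y} \longrightarrow \wedge^2\,\underline{\mathscr{S}}^\vee \longrightarrow 0.$$
Evaluating each term over the point $\Lambda$ produces $\Lambda^\vee \otimes \Lambda^\perp/\Lambda$, $T_\Lambda OG(k,\mathscr{E})$, and $\wedge^2\,\Lambda^\vee$, respectively. Since the sequence splits canonically in Lemma \ref{D:lem}, the claimed identification
$$T_\Lambda OG(k,\mathscr{E}) \cong (\Lambda^\vee \otimes \Lambda^\perp/\Lambda) \oplus \wedge^2\,\Lambda^\vee$$
is obtained directly.

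No genuine obstacle is anticipated, since all the content lies in Lemma \ref{D:lem}; the corollary is simply its pointwise shadow. The only thing I would briefly verify is that the splitting map $\phi = \iota \otimes \psi$ used in the proof of Lemma \ref{D:lem}, built from the identity on $\underline{\mathscr{S}}^\vee$ and the identification $\underline{\mathscr{E}}/\underline{\mathscr{S}}^\perp \cong \underline{\mathscr{S}}^\vee$ induced by the bilinear form, specializes at the fiber over $\Lambda$ to the analogous map on $\Lambda^\vee \otimes \mathscr{E}/\Lambda$ coming from the symmetric form on $\mathscr{E}$; this compatibility is automatic because $\phi$ is constructed purely from the form on $\underline{\mathscr{E}}$, which restricts fiberwise to the given form on $\mathscr{E}$.
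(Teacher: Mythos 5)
Your proposal is correct and matches the paper's (implicit) argument: the paper presents this corollary as the pointwise specialization of Lemma \ref{D:lem} when the base $Y$ is a point, exactly as you do, in parallel with its earlier remark that a Grassmannian bundle over a point is the classical Grassmannian. Nothing further is needed.
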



The above corollary is an analogy as to the tangent space of isotropic Grassmannians shown in the proof of \cite[Lemma 3.1]{Park}. The following theorem exhibits the Chern class of the tangent bundle of the locus $Z_{\underline{\alpha}}$ with respect to universal bundles on $X$.

\begin{theorem}\label{CTZ}
Let $Z_{\underline{\alpha}}\rightarrow \mathbb{S}(\underline{\alpha})$ be the IH-small resolution for a Schubert variety $\mathbb{S}(\underline{\alpha})\subset OG'(n,V)$ (or resp. $OG''(n,V)$) associated to $\underline{\alpha}\in W^P$. Then the Chern class of $TZ_{\underline{\alpha}}$ is given by
$$c(TZ_{\underline{\alpha}})=\prod_{i=1}^{d}c((\underline{U}_i/\underline{W}_i^L)^\vee\otimes(\underline{W}_i^R/\underline{U}_i))c\left(\mathrm{\wedge}^2(\underline{U}_{d+1}/\underline{W}_{d+1}^L)^\vee\right).$$
\end{theorem}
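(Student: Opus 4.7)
The plan is to build up the total Chern class of $TZ_{\underline{\alpha}}$ step by step along the tower
$$Z_{\underline{\alpha}}=Z^{(d+1)}\longrightarrow Z^{(d)}\longrightarrow\cdots\longrightarrow Z^{(2)}\longrightarrow Z^{(1)}\longrightarrow \mathrm{pt}$$
provided by Proposition \ref{D:prop}. Since each intermediate map in this tower is a smooth fiber bundle (an ordinary Grassmannian bundle for $j\leq d$ and an orthogonal Grassmannian bundle for $j=d+1$), the short exact sequence
$$0\longrightarrow (\rho_{j-1})^{-1}TZ^{(j-1)}\longrightarrow TZ^{(j)}\longrightarrow T_{Z^{(j)}/Z^{(j-1)}}\longrightarrow 0$$
splits the problem by Whitney multiplicativity into a product of relative tangent factors, and the theorem will follow by assembling these factors on $Z_{\underline{\alpha}}=Z^{(d+1)}$.

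For the base case $Z^{(1)}\cong Gr(l_1,\underline{W}_1^R/\underline{W}_1^L)$ and for each intermediate stage $2\leq j\leq d$, the fiber bundle $Z^{(j)}\to Z^{(j-1)}$ is an ordinary Grassmannian bundle, so the identification \eqref{D:eq} applies with tautological sub $\underline{U}_j/\underline{W}_j^L$ and quotient $\underline{W}_j^R/\underline{U}_j$. This contributes the factor
$$c\bigl(T_{Z^{(j)}/Z^{(j-1)}}\bigr)=c\bigl((\underline{U}_j/\underline{W}_j^L)^\vee\otimes(\underline{W}_j^R/\underline{U}_j)\bigr),$$
giving the product $\prod_{i=1}^{d}c\bigl((\underline{U}_i/\underline{W}_i^L)^\vee\otimes(\underline{W}_i^R/\underline{U}_i)\bigr)$ after multiplying across $j=1,\dots,d$.

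For the top level $Z^{(d+1)}\to Z^{(d)}$, Proposition \ref{D:prop} identifies this with the orthogonal Grassmannian bundle $OG(l_{d+1},(\underline{W}_{d+1}^L)^\perp/\underline{W}_{d+1}^L)$, where the relative tautological isotropic subbundle is $\underline{\mathscr{S}}:=\underline{U}_{d+1}/\underline{W}_{d+1}^L$. Lemma \ref{D:lem} then gives
$$T_{Z^{(d+1)}/Z^{(d)}}\cong\bigl(\underline{\mathscr{S}}^\vee\otimes\underline{\mathscr{S}}^\perp/\underline{\mathscr{S}}\bigr)\oplus\mathrm{\wedge}^2\underline{\mathscr{S}}^\vee.$$
The crucial observation is that since $\underline{U}_{d+1}$ has maximal isotropic fibers of rank $n$ in $V$, the induced subbundle $\underline{\mathscr{S}}=\underline{U}_{d+1}/\underline{W}_{d+1}^L$ has rank $n-\mathrm{rk}\,\underline{W}_{d+1}^L$, which is exactly half the rank of $(\underline{W}_{d+1}^L)^\perp/\underline{W}_{d+1}^L$. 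Hence $\underline{\mathscr{S}}$ is itself maximal isotropic, so $\underline{\mathscr{S}}^\perp=\underline{\mathscr{S}}$ and the first summand $\underline{\mathscr{S}}^\vee\otimes(\underline{\mathscr{S}}^\perp/\underline{\mathscr{S}})$ vanishes. What remains is the factor $c(\mathrm{\wedge}^2(\underline{U}_{d+1}/\underline{W}_{d+1}^L)^\vee)$.

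Putting everything together via the iterated Whitney formula yields the desired formula for $c(TZ_{\underline{\alpha}})$. The only subtle point—and the step that really uses the $OG$ structure rather than type A—is recognizing the vanishing of the $\underline{\mathscr{S}}^\perp/\underline{\mathscr{S}}$ term at the last stage from the maximal isotropy constraint; everything else is a straightforward tower-of-bundles Chern class computation using \eqref{D:eq} and Lemma \ref{D:lem}.
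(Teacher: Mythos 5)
Your argument is correct and follows essentially the same route as the paper: the tower $Z^{(d+1)}\to Z^{(d)}\to\cdots\to Z^{(1)}$ from Proposition \ref{D:prop}, Whitney multiplicativity with the relative tangent bundle identification \eqref{D:eq} for the first $d$ (ordinary Grassmannian bundle) stages, and Lemma \ref{D:lem} at the last stage with the observation that $(\underline{U}_{d+1}/\underline{W}_{d+1}^L)^\perp/(\underline{U}_{d+1}/\underline{W}_{d+1}^L)$ vanishes because the subbundle is maximal isotropic. Your rank count making that vanishing explicit is in fact slightly more detailed than the paper, which simply asserts the quotient is trivial.
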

\begin{proof}

We know from Proposition \ref{D:prop} that $Z^{(j)}$ is an ordinary Grassmannian bundle over $Z^{(j-1)}$ for $1\leq j\leq d$. Thence, the similar argument in the proof of \cite[Theorem 4.3.3]{Benjamin} works. Particularly we attain
$$c(TZ^{(d)})=\prod_{i=1}^dc((\underline{U}_i/\underline{W}_i^L)^\vee\otimes(\underline{W}_i^R/\underline{U}_i)).$$

Recall that $l_{d+1}=k_{d+1}-\mathrm{dim}(W_{d+1}^L)$ and $Z^{(d+1)}$ is an isotropic Grassmannian bundle over $Z^{(d)}$. These implicate
$$Z^{(d+1)}\cong OG(l_{d+1},(\underline{W}_{d+1}^L)^\perp/\underline{W}_{d+1}^R)$$ with the projection map $OG(l_{d+1},(\underline{W}_{d+1}^L)^\perp/\underline{W}_{d+1}^R)\rightarrow Z^{(d)}$ restricted by the map $\phi:X^{(d+1)}\rightarrow X^{(d)}$. 
Since $\underline{U}_{d+1}/\underline{W}_{d+1}^L$ is the universal subbundle of $OG(l_{d+1},(\underline{W}_{d+1}^L)^\perp/\underline{W}_{d+1}^R)$, by Proposition \ref{D:lem} we have a canonical isomorphism
\begin{align*}
T_{OG(l_{d+1},(\underline{W}_{d+1}^L)^\perp/\underline{W}_{d+1}^R)/Z^{(d)}}\cong& ((\underline{U}_{d+1}/\underline{W}_{d+1}^L)^\vee\otimes((\underline{U}_{d+1}/\underline{W}_{d+1}^L)^\perp/\underline{U}_{d+1}/\underline{W}_{d+1}^L)\\
&\oplus \wedge^2(\underline{U}_{d+1}/\underline{W}_{d+1}^L)^\vee.
\end{align*}
As $(\underline{U}_{d+1}/\underline{W}_{d+1}^L)^\perp/\underline{U}_{d+1}/\underline{W}_{d+1}^L$ is trivial, we arrive at
$$c(T_{OG(l_{d+1},(\underline{W}_{d+1}^L)^\perp/\underline{W}_{d+1}^R)/Z^{(d)}})\cong c(\wedge^2(\underline{U}_{d+1}/\underline{W}_{d+1}^L)^\vee)$$
as desired.
%
%
%
\end{proof}


As to Schubert varieties, Pragacz \citelist{\cite{P1}\cite{P2}} validated that the cohomology class for the varieties in orthogonal or symplectic Grassmannians can be decided by {\it Schur P or Q-functions} which are certain universal polynomials in Pfaffians. Let us look into these two families of polynomials. \label{page:Pfa}

The first family is about the Q-functions. Let $\lambda=(\lambda_1>\ldots>\lambda_N)$ be a strict partition whose length $\ell{(\lambda)}$ is $N$. For $k\neq l$, we set
\begin{equation}\label{schuQ}
\widetilde{Q}_{kl}(\underline{E}):=c_k(\underline{E})\cdot c_l(\underline{E})+2\sum_{j=1}^l(-1)^jc_{k+j}(\underline{E})\cdot c_{l-j}(\underline{E}),
\end{equation}
satisfying $\widetilde{Q}_{kk}(\underline{E})=0$ and $\widetilde{Q}_{kl}(\underline{E})=-\widetilde{Q}_{lk}(\underline{E})$ in the Chow group $A_*(Y)$ of $Y$. Specifically $\widetilde{Q}_k(\underline{E}):=\widetilde{Q}_{k0}(\underline{E})=c_k(\underline{E})$ for $k\geq 0$. Assume that $N$ is even. If not, we may put $\lambda_N=0$. Then we define 
$$\widetilde{Q}_\lambda(\underline{E}):=\mathrm{Pf}(\widetilde{Q}_{\lambda_i\lambda_j}(\underline{E}))_{1\leq i<j\leq N}$$
where $\mathrm{Pf}$ indicates the Pfaffian of the skew-symmetric matrix. The Pfaffians for $\widetilde{Q}_\lambda$ form a basis of the ring
\begin{equation}\label{eq:gamma}
\Gamma=\mathbb{Z}[\widetilde{Q}_1,\widetilde{Q}_2,\ldots]/(\widetilde{Q}_k^2+2\sum_{j=1}^k(-1)^j\widetilde{Q}_{k+j}\widetilde{Q}_{k-j}, k\geq 1)
\end{equation}
over $\mathbb{Z}$.
The second family is about the P-functions. In this family, we may assume that the Chern class of the vector bundle $\underline{E}$ is divisible by $2$. We define 
$$\widetilde{P}_\lambda(\underline{E}):=\dfrac{1}{2^{\ell(\lambda)}}\widetilde{Q}_\lambda(\underline{E}).$$ 
In particular, $\widetilde{P}_i(\underline{E})=c_i(\underline{E})/2$. We observe from the equation \eqref{schuQ} that
$$\widetilde{P}_{kl}(\underline{E})=\widetilde{P}_{k}(\underline{E})\cdot\widetilde{P}_l(\underline{E})+2\sum_{j=1}^{l-1}(-1)^j\widetilde{P}_{k+j}(\underline{E})\cdot \widetilde{P}_{l-j}(\underline{E})+(-1)^l\widetilde{P}_{k+l}(\underline{E}).$$

Here is the lemma addressing the relation of the class of Schubert variety, a Schubert class in the even orthogonal Grassmannians to the Pfaffians.

\begin{lemma}[{\cite[Theorem 2.1]{PR}}]\label{lem:class}
Let $V$ be a $2n$-dimensional vector space over $\mathbb{C}$ and it is equipped with a nondegenerate quadratic form. Then the Schubert class for some partition $\underline{\alpha}$ in the Chow group $A_*(OG'(n,V))$ (resp. $A_*(OG''(n,V))$) is 
$$\left[\mathbb{S}(\underline{\alpha})\right]=\widetilde{P}_{\underline{\alpha}}(\underline{U}^\vee),$$
 where $\underline{U}$ is the tautological subbundle on $OG'(n,V)$ (resp. $OG''(n,V)$). 
\end{lemma}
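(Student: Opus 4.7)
The plan is to realize the Schubert variety $\mathbb{S}(\underline{\alpha})$ as an orthogonal degeneracy locus for a morphism of vector bundles built out of the tautological subbundle $\underline{U}$ and the fixed isotropic flag, and then to read off its class from the Pfaffian degeneracy locus formula of Pragacz--Ratajski \cite[Thm.~2.1]{PR}. Explicitly, I would first rewrite the defining incidence conditions $\dim(L\cap V_{\alpha_i})\geq i$ as rank drops for the composition
\[
\underline{U}\hookrightarrow V_{OG'(n,V)}\twoheadrightarrow V/\underline{V}_{\alpha_i-1},
\]
and observe that, because each $V_{\alpha_i}$ is isotropic for the quadratic form on $V$, the resulting locus is of orthogonal type rather than the type A (determinantal) type considered by Kempf--Laksov.

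The second step is to apply the Pfaffian formula. Recall from the paragraph preceding the statement that the $\widetilde{P}_\lambda$'s form a basis of the ring $\Gamma$ defined in \eqref{eq:gamma}, and the relation $\widetilde{P}_k(\underline{E})=c_k(\underline{E})/2$ links the Pfaffian variables to Chern classes. The Pragacz--Ratajski theorem asserts exactly that the fundamental class of such an isotropic degeneracy locus is the Pfaffian $\widetilde{P}_{\underline{\alpha}}$ evaluated on the Chern roots of $\underline{U}^\vee$; one then only needs to match the conventions (choice of strict partition $\underline{\alpha}$ versus Weyl group element $w_{\underline{\alpha}}\in W^P$ and the two connected components $OG'$ and $OG''$, which are swapped by an outer automorphism and carry the same Schubert calculus).

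The main technical obstacle is the appearance of Pfaffians instead of determinants: in the orthogonal setting the quadratic form forces the Chern classes of $\underline{U}^\vee$ to satisfy the quadratic relation $c_k^2+2\sum_{j=1}^k(-1)^j c_{k+j}c_{k-j}=0$, which is precisely the defining relation of $\Gamma$. This relation is what makes the Pfaffian expression well-defined and independent of the ambiguity $\widetilde{P}_{kl}=\widetilde{P}_k\widetilde{P}_l+2\sum_{j=1}^{l-1}(-1)^j\widetilde{P}_{k+j}\widetilde{P}_{l-j}+(-1)^l\widetilde{P}_{k+l}$ displayed just before the lemma, and it is the step where the geometry of the quadratic form on $V$ enters the computation. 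Once this structural fact is in hand, the Pfaffian degeneracy locus formula applies verbatim and yields $[\mathbb{S}(\underline{\alpha})]=\widetilde{P}_{\underline{\alpha}}(\underline{U}^\vee)$; since \cite{PR} carries out this derivation in full, in this paper the lemma is essentially established by citation after the above identification of $\mathbb{S}(\underline{\alpha})$ with their degeneracy locus.
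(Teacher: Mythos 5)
The paper offers no proof of this lemma at all: it is quoted verbatim as \cite[Theorem~2.1]{PR}, and the surrounding text only sets up the ring $\Gamma$, the Pfaffians $\widetilde{Q}_\lambda$, $\widetilde{P}_\lambda$, and the duality $\int_{OG'(n,V)}[\mathbb{S}(\underline{\alpha})]\cdot[\widetilde{\mathbb{S}}(\underline{\alpha})]=1$ that are used afterwards. Your proposal correctly recognizes this and ends by deferring to the citation, so in substance you and the paper are in the same place. Your added sketch is broadly sound --- in particular the observation that $\underline{U}^\perp=\underline{U}$ on $OG(n,V)$ forces $c(\underline{U})c(\underline{U}^\vee)=1$, which is exactly the defining relation $\widetilde{Q}_k^2+2\sum_{j\ge 1}(-1)^j\widetilde{Q}_{k+j}\widetilde{Q}_{k-j}=0$ of $\Gamma$, is the right structural point. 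Two caveats, though. First, a small indexing slip: the condition $\dim(L\cap V_{\alpha_i})\ge i$ corresponds to a rank bound on $\underline{U}\to V/\underline{V}_{\alpha_i}$, not $V/\underline{V}_{\alpha_i-1}$; and Pragacz--Ratajski phrase their loci directly by intersection-dimension conditions on isotropic subbundles rather than as rank drops of a composite in the type~A style. Second, and more seriously as a matter of logical order: in \cite{PR} the Schubert-class formula (their Theorem~2.1) is the \emph{input}, established via symmetrizing operators and Gysin maps on isotropic flag bundles, from which the Pfaffian degeneracy-locus formulas are later deduced. Your route --- ``realize $\mathbb{S}(\underline{\alpha})$ as an orthogonal degeneracy locus and apply the Pfaffian degeneracy-locus formula'' --- therefore risks circularity if taken as an actual proof of the cited theorem, rather than as a consistency check. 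Since both you and the paper ultimately rest on the citation, this does not invalidate anything, but if you wanted a self-contained argument you would need to reproduce the divided-difference computation of \cite{PR} (or \cite{P2}), not invoke the degeneracy-locus theorem that depends on it.
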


In addition we have the dual Schubert class $\left[\widetilde{\mathbb{S}}(\underline{\alpha})\right]$ given by $\widetilde{P}_{\rho(n-1)\backslash \underline{\alpha}}(\underline{E}^\vee)$ for the strict partition $\rho_{n-1}=(n-1,n-2,\ldots, 1)$ such that 
$$\int_{OG'(n,V)} \left[\mathbb{S}(\underline{\alpha})\right]\cdot\left[\widetilde{\mathbb{S}}(\underline{\alpha})\right]=1.$$
 Here $\rho(n-1)\backslash \underline{\alpha}$ is the complement partition of $\underline{\alpha}$ in $\rho(n-1)$.
Another relevant reference for these discussions can be \cite[Section 2]{PR}.

The following theorem suggests the way of finding the coefficients in the Schubert class of the pushforward $\pi_*c_{SM}(Z_{\underline{\alpha}})$.

\begin{theorem}\label{Cst_D}
Let $\underline{U}$ be the pullback of the universal tautological subbundle on $OG'(n,V)$ (resp. $OG''(n,V))$. The coefficient $\gamma_{\underline{\alpha}, \underline{\beta}}$ of the Schubert class $\left[\mathbb{S}(\underline{\beta})\right]$ in $\pi_*c_{SM}(Z_{\underline{\alpha}})$ is computed by 
$$\gamma_{\underline{\alpha}, \underline{\beta}}=\int_{Z_{\underline{\alpha}}}c(TZ_{\underline{\alpha}})\cdot \widetilde{P}_{\rho(n-1)\backslash\underline{\beta}}(\underline{U}^\vee)\cap\left[Z_{\underline{\alpha}}\right].$$
\end{theorem}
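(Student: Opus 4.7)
The plan is to extract $\gamma_{\underline{\alpha},\underline{\beta}}$ by pairing the pushforward $\pi_*c_{SM}(Z_{\underline{\alpha}})$ against a dual basis class and then transport the integral back up to $Z_{\underline{\alpha}}$ via the projection formula. First I would write the pushforward in the Schubert basis of $A_*(OG'(n,V))$ (resp.\ $OG''(n,V)$) as
\[
\pi_*c_{SM}(Z_{\underline{\alpha}}) \;=\; \sum_{\underline{\beta}}\gamma_{\underline{\alpha},\underline{\beta}}\,[\mathbb{S}(\underline{\beta})],
\]
noting that this sum only runs over $\underline{\beta}\leq\underline{\alpha}$ because $\pi_*c_{SM}(Z_{\underline{\alpha}})$ is supported on $\mathbb{S}(\underline{\alpha})$.

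Next I would invoke the Pfaffian description of Lemma \ref{lem:class} together with the Poincar\'e duality pairing on $OG'(n,V)$: the class $\widetilde{P}_{\rho(n-1)\backslash \underline{\beta}}(\underline{U}^\vee)$ is dual to $[\mathbb{S}(\underline{\beta})]$ with respect to the pairing $\int_{OG'(n,V)}(-\cdot-)$, as recalled in the paragraph following Lemma \ref{lem:class}. Cap-multiplying both sides of the displayed expansion against $\widetilde{P}_{\rho(n-1)\backslash\underline{\beta}}(\underline{U}^\vee)$ and integrating therefore isolates the single coefficient
\[
\gamma_{\underline{\alpha},\underline{\beta}} \;=\; \int_{OG'(n,V)} \widetilde{P}_{\rho(n-1)\backslash\underline{\beta}}(\underline{U}^\vee) \,\cap\, \pi_* c_{SM}(Z_{\underline{\alpha}}).
\]

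The next step is to apply the projection formula to the proper morphism $\pi:Z_{\underline{\alpha}}\to \mathbb{S}(\underline{\alpha})\hookrightarrow OG'(n,V)$, which allows me to replace $\widetilde{P}_{\rho(n-1)\backslash\underline{\beta}}(\underline{U}^\vee)\cap \pi_*(-)$ by $\pi_*\bigl(\pi^*\widetilde{P}_{\rho(n-1)\backslash\underline{\beta}}(\underline{U}^\vee)\cap(-)\bigr)$; as the pullback of $\underline{U}$ under $\pi$ is the tautological bundle $\underline{U}_{d+1}$ on $Z_{\underline{\alpha}}$ (which, by mild abuse of notation, is itself written $\underline{U}$), one has $\pi^*\widetilde{P}_{\rho(n-1)\backslash\underline{\beta}}(\underline{U}^\vee) = \widetilde{P}_{\rho(n-1)\backslash\underline{\beta}}(\underline{U}^\vee)$ in $A^*(Z_{\underline{\alpha}})$. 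Finally, I would use smoothness of $Z_{\underline{\alpha}}$ (Proposition \ref{D:prop}) together with \eqref{eqn:sm} to identify $c_{SM}(Z_{\underline{\alpha}}) = c(TZ_{\underline{\alpha}})\cap[Z_{\underline{\alpha}}]$, yielding
\[
\gamma_{\underline{\alpha},\underline{\beta}} \;=\; \int_{Z_{\underline{\alpha}}} c(TZ_{\underline{\alpha}})\cdot \widetilde{P}_{\rho(n-1)\backslash\underline{\beta}}(\underline{U}^\vee)\cap [Z_{\underline{\alpha}}].
\]

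The only real subtlety lies in the second step, namely asserting that $\{\widetilde{P}_{\rho(n-1)\backslash\underline{\beta}}(\underline{U}^\vee)\}$ is dual to the Schubert basis under the intersection pairing on $OG'(n,V)$ (resp. $OG''(n,V)$); this is a classical fact (cf.\ \cite{PR}), but one should double-check the parity/family conventions and the precise form of $\rho(n-1)\backslash\underline{\beta}$ so that the duality relation $\int_{OG'(n,V)}[\mathbb{S}(\underline{\alpha})]\cdot \widetilde{P}_{\rho(n-1)\backslash\underline{\alpha}}(\underline{U}^\vee)=1$ holds as stated. All other steps (the Schubert expansion of $\pi_*c_{SM}$, the projection formula, and the smoothness identification) are standard and involve no further obstruction.
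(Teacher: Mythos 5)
Your proposal is correct and follows essentially the same route as the paper's own proof: expand $\pi_*c_{SM}(Z_{\underline{\alpha}})$ in the Schubert basis, pair against the dual Schubert class $\left[\widetilde{\mathbb{S}}(\underline{\beta})\right]=\widetilde{P}_{\rho(n-1)\backslash\underline{\beta}}(\underline{U}^\vee)$ from Lemma \ref{lem:class}, use smoothness of $Z_{\underline{\alpha}}$ to replace $c_{SM}$ by $c(TZ_{\underline{\alpha}})\cap[Z_{\underline{\alpha}}]$, and pull the integral up via the projection formula. The subtlety you flag about the duality convention is exactly the input the paper takes from \cite{PR}, so there is no gap.
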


\begin{proof}
Let $\widetilde{\mathbb{S}}({\underline{\beta}})$ be the dual Schubert variety to $\mathbb{S}(\underline{\beta})$. Since two Schubert classes are dual each other under the pairing of Poincar\'{e} duality, we have $\int_{OG'(n,V)}\left[\mathbb{S}(\underline{\beta})\right]\cdot \left[\widetilde{\mathbb{S}}({\underline{\beta}})\right]=1$. Then the constant $\gamma_{\underline{\alpha},\underline{\beta}}$ can be expressed by
\begin{align*}
\gamma_{\underline{\alpha}, \underline{\beta}}&=\int_{OG'(n,V)}\pi_*c_{SM}(Z_{\underline{\alpha}})\cdot \left[\widetilde{\mathbb{S}}({\underline{\beta}})\right].
\end{align*}
As the locus $Z_{\underline{\alpha}}$ is nonsingular such that $c_{SM}(Z_{\underline{\alpha}})=c(TZ_{\underline{\alpha}})\cap \left[Z_{\underline{\alpha}}\right]$, the integral becomes 
$$\int_{OG'(n,V)}\pi_*(c(TZ_{\underline{\alpha}})\cap \left[Z_{\underline{\alpha}}\right])\cdot \left[\widetilde{\mathbb{S}}({\underline{\beta}})\right].$$
Combined with the class $\left[\widetilde{\mathbb{S}}({\underline{\beta}})\right]=\widetilde{P}_{\rho(n-1)\backslash\underline{\beta}}(\underline{U}^\vee)\in A_*(OG'(n,V))\;$ (resp. $A_*(OG''(n,V)))\;$
from Lemma \ref{lem:class} and the projection formula \cite[Proposition 2.5(c)]{Fulton}, we conclude
\begin{align*}
\gamma_{\underline{\alpha}, \underline{\beta}}&=\int_{OG'(n,V)}\pi_*\left((c(TZ_{\underline{\alpha}})\cap \left[Z_{\underline{\alpha}}\right])\cdot \pi^*\left[\widetilde{\mathbb{S}}({\underline{\beta}})\right]\right)=\int_{Z_{\underline{\alpha}}}c(TZ_{\underline{\alpha}})\cdot \widetilde{P}_{\rho(n-1)\backslash\underline{\beta}}(\underline{U}^\vee)\cap \left[Z_{\underline{\alpha}}\right],
\end{align*}
suppressing the pullback notation for vector bundles.
(cf. \cite[proof of Lemma 12.1]{Fulton}). 
\end{proof}

We will discuss some properties of $\gamma_{\underline{\alpha},\underline{\beta}}$ later in Remark \ref{rmk}. It is well-known that the characteristic cycle of $\mathcal{IC}$-sheaf is irreducible for all cominuscule Schubert varieties in types A, C and D if and only if their Dynkin diagram is simply laced \cite{BF}. Since the Dynkin diagram of type D is simply laced, the characteristic cycle associated to such Schubert varieties is irreducible:

\begin{theorem}[{\cite[Theorem 7.1A]{BF}}]\label{Thm:irred}
Let $\mathbb{S}(\underline{{\alpha}})\subset OG'(n, V)$ (resp. $OG''(n,V)$) be a Schubert variety and $\mathcal{IC}^\bullet_{\mathbb{S}(\underline{\alpha})}$ be the corresponding intersection cohomology sheaf. Then
$$CC(\mathcal{IC}^\bullet_{\mathbb{S}(\underline{\alpha})})=\left[\overline{T^*_{\mathbb{S}(\underline{\alpha})^\circ}OG'(n,V)}\right].$$
\end{theorem}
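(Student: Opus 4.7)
The plan is to recognize this statement as a direct specialization of the Boe--Fu theorem just cited, so the task reduces to verifying that its hypotheses apply and spelling out the translation into the characteristic cycle formulation. First I would observe that $OG'(n,V)$ (respectively $OG''(n,V)$) is the cominuscule partial flag variety $G/P$ obtained from $G = SO(2n)$ by deleting one of the two spin nodes of the $D_n$ Dynkin diagram, and that $D_n$ is simply laced. Under these hypotheses the Boe--Fu theorem applies to every Schubert variety $\mathbb{S}(\underline{\alpha}) \subset OG'(n,V)$ and furnishes the equality
\[
P_{\underline{\alpha},\underline{\beta}}(1) = \mathrm{Eu}_{\mathbb{S}(\underline{\alpha})}(p_{\underline{\beta}}) \quad \text{for every } \underline{\beta} \leq \underline{\alpha}.
\]

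Given this input, I would expand the characteristic cycle in the basis of conormal cycles as
\[
CC(\mathcal{IC}^\bullet_{\mathbb{S}(\underline{\alpha})}) = \sum_{\underline{\alpha}' \leq \underline{\alpha}} \gamma_{\underline{\alpha}'} \cdot \bigl[\overline{T^*_{\mathbb{S}(\underline{\alpha}')^\circ} OG'(n,V)}\bigr],
\]
and solve for the microlocal multiplicities $\gamma_{\underline{\alpha}'}$ by means of the microlocal index formula together with the identification $\chi_{p_{\underline{\beta}}}(\mathcal{IC}^\bullet_{\mathbb{S}(\underline{\alpha})}) = P_{\underline{\alpha},\underline{\beta}}(1)$ recorded in the proof of Theorem~\ref{thm:KL}. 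Setting $\underline{\beta} = \underline{\alpha}'$ in the index formula, normalizing by $\mathrm{Eu}_{\mathbb{S}(\underline{\alpha}')}(p_{\underline{\alpha}'}) = 1$, and substituting the Boe--Fu equality produces
\[
\mathrm{Eu}_{\mathbb{S}(\underline{\alpha})}(p_{\underline{\alpha}'}) = \gamma_{\underline{\alpha}'} + \sum_{\underline{\alpha}' < \underline{\alpha}'' \leq \underline{\alpha}} \mathrm{Eu}_{\mathbb{S}(\underline{\alpha}'')}(p_{\underline{\alpha}'}) \cdot \gamma_{\underline{\alpha}''}.
\]
A downward induction on $\underline{\alpha}''$ in Bruhat order, anchored by the usual normalization $\gamma_{\underline{\alpha}} = 1$ at the top, then forces $\gamma_{\underline{\alpha}'} = 0$ for every $\underline{\alpha}' < \underline{\alpha}$, and only the conormal cycle over the open Schubert cell survives.

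The main obstacle in a first-principles proof would be the Boe--Fu equality itself, whose proof couples Deodhar-style recursions for Kazhdan--Lusztig polynomials in the cominuscule setting with the representation of the local Euler obstruction by IC stalk Euler characteristics, and ultimately relies on the combinatorial rigidity forced by the simply-laced hypothesis. For the present paper, however, no new work beyond recording the cominuscule, simply-laced identification of $OG'(n,V)$ and $OG''(n,V)$ and performing the triangular inversion above is required.
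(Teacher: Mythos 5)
The paper offers no proof of this statement at all: it is quoted verbatim from Boe--Fu, whose Theorem 7.1A asserts precisely the irreducibility of $CC(\mathcal{IC}^\bullet_{\mathbb{S}(\underline{\alpha})})$ for Schubert varieties in the simply-laced cominuscule (Hermitian symmetric) spaces, of which $OG'(n,V)$ and $OG''(n,V)$ are instances. You correctly identify the result as a specialization of that theorem, so in substance you and the paper agree. The one point to flag is the direction of your reduction: what Boe--Fu prove is the irreducibility statement itself, not the equality $P_{\underline{\alpha},\underline{\beta}}(1)=\mathrm{Eu}_{\mathbb{S}(\underline{\alpha})}(p_{\underline{\beta}})$; that equality is the standard \emph{consequence} of irreducibility, obtained from the microlocal index formula, and is exactly one half of the equivalence the paper records at the end of Section~\ref{sec2}. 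Your triangular inversion is a correct proof of the converse implication (Euler-obstruction equality $\Rightarrow$ irreducibility), and the unitriangularity of the matrix $\bigl(\mathrm{Eu}_{\mathbb{S}(\underline{\alpha}'')}(p_{\underline{\beta}})\bigr)$ does force $\gamma_{\underline{\alpha}'}=0$ for $\underline{\alpha}'<\underline{\alpha}$ once $\gamma_{\underline{\alpha}}=1$ is fixed; but as written your argument derives the theorem from a statement that is itself normally extracted from the theorem, so it adds nothing unless you have an independent source for the Kazhdan--Lusztig/Euler-obstruction equality in type D. The cleanest route is the paper's: cite \cite{BF}*{Theorem 7.1A} directly, after noting (as you do) that $OG'(n,V)$ is the cominuscule $SO(2n)/P$ attached to a spin node of the simply-laced diagram $D_n$.
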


The irreducibility of $CC(\mathcal{IC}_{\mathbb{S}(\underline{\alpha})}^\bullet)$ and Theorem \ref{thm:CM} enable us to have the Chern-Mather class of $\mathbb{S}(\underline{\alpha})$ via the pushforward of $c(TZ_{\underline{\alpha}})$ for the IH-small resolution $Z_{\underline{\alpha}}\rightarrow \mathbb{S}(\underline{\alpha})$ of the Schubert variety $\mathbb{S}(\underline{\alpha})$.

%
\begin{remark}
The Chern-Mather classes of Schubert varieties in (types A and D) Grassmannians are always positive \cite[Corollary 10.5, Proposition 10.3]{MS}. In other word, $\gamma_{\underline{\alpha},\underline{\beta}}>0$.
\end{remark}

\subsection{Explicit computations}\label{ExmD}

We recall some basic formulas in \cite{Fulton} before our explicit example-computation on the Chern-Mather class of a Schubert variety. 

Let $X$ be an algebraic variety over $\mathbb{C}$. Suppose that $\underline{E}$ and $\underline{F}$ are vector bundles of rank $e$ and $f$ respectively over $X$. Let $\lambda=(\lambda_1,\ldots,\lambda_N)$ and $\mu=(\mu_1,\ldots,\mu_N)$ be nonnegative decreasing integer sequences of length $N$ with $\mu_i\leq\lambda_i$ for $1\leq i\leq N$, i.e., $\mu\subset\lambda$. We denote by $|a_{i,j}|_{1\leq i,j\leq N}$ the determinant of the matrix $(a_{i,j})_{1\leq i,j\leq N}$ and $\binom{c}{d}$ the binomials. The integer $\mathcal{D}^N_{\lambda\mu}$ is defined by the determinant
$$\mathcal{D}^N_{\lambda\mu}=\left| \binom{\lambda_i+N-i}{\mu_j+N-j}\right|_{1\leq i,j\leq N}.$$
Using these notations, the total Chern class of the tensor product of $\underline{E}^\vee$ and $\underline{F}$ can be written as the sum 
\begin{equation}\label{eqn:tensor}
c(\underline{E}^\vee\otimes \underline{F})=\sum \mathcal{D}^e_{\lambda\mu}s_{{\mu}}(E)s_{\widetilde{\lambda'}}(F)
\end{equation}
over $\mu\subset\lambda$ for the partition $0\leq\lambda_n\leq\cdots\leq \lambda_1\leq f$ bounded by the rank $f$.
Here $s_\nu(\underline{A})$ is the Schur determinant \cite[Section 14.5]{Fulton} for a partition $\nu$ at the Segre classes of $\underline{A}$ and $\widetilde{\lambda'}$ is the conjugate partition to the partition $\lambda'=(f-\lambda_n,\ldots,f-\lambda_1)$.
 
If the Chern roots of $\underline{E}$ are $\alpha_1,\ldots, \alpha_r$, then the exterior power of $\underline{E}$ is given by the product
\begin{equation}\label{eqn:ext}
c_r(\wedge^p \underline{E})=\prod_{i_1<\cdots<i_r}(1+(\alpha_{i_1}+\cdots+\alpha_{i_r})t).
\end{equation}

Let us state the algebraic version of Bott Residue formula \cite[Theorem 3]{EG}. Suppose $X$ is a smooth, compact (or complete) projective variety and has a $T$-action on it. Given $T$-equivariant vector bundles $\underline{E}_1$, $\underline{E}_2,\ldots, \underline{E}_n$ over $X$, we denote by $P(\underline{E})$ a polynomial in the Chern classes of the vector bundles $\underline{E}_1,\ldots,\underline{E}_n$. We also denote by $c_j^T(\underline{E}_i)$ the equivariant Chern classes of $\underline{E}_i$ for $i=1,\ldots, n$ and $P^T(\underline{E})$ the polynomial in the equivariant Chern classes of the $\underline{E}_i$ for $i=1,\ldots,n$ which specializes to the polynomial $P(\underline{E})$. Let $\pi_{X*}:A_*^TX\rightarrow R_T$ be the push-forward induced by the projection $\pi_X\rightarrow pt$. One can replace $X$ by any component $F$ of $X^T$. \label{page:Pfa1}  

\begin{theorem}[Bott Residue Formula]\label{Bott}
The integral of $P(\underline{E})$ over $X$ is the sum 
$$\int_X P(\underline{E})\cap [X]=\sum_{F\subset X^T}\pi_{F_*}\left( \dfrac{P^T(\underline{E}|_F)\cap [F]_T}{c_{d_f}^T(N_F X)}\right),$$
over the torus-fixed point set $X^T$ of $X$ where $N_FX$ is the normal bundle over $X$ at the connected components $F$ such that $d_F$ is the rank of $N_FX$ as well as the codimension of $F$ in $X$.

\end{theorem}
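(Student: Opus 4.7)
The plan is to deduce this identity from the equivariant localization theorem of \cite{B}, applied to the equivariant refinement $P^T(\underline{E}) \cap [X]_T \in A_*^T(X)$. The proof proceeds in three steps: lifting to equivariant Chow, applying localization, and specializing back to ordinary Chow.

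First, since each $\underline{E}_i$ is $T$-equivariant, there is an equivariant Chern class refinement $P^T(\underline{E}) \in A_T^*(X)$ that maps to $P(\underline{E})$ under the forgetful map $A_T^*(X)\to A^*(X)$. Capping with the equivariant fundamental class $[X]_T$ gives $P^T(\underline{E})\cap [X]_T \in A_*^T(X)$. Pushing forward to a point lands in $A_*^T(\mathrm{pt})$, and the augmentation $A_*^T(\mathrm{pt})\to A_*(\mathrm{pt})=\mathbb{Z}$ recovers the ordinary integer $\int_X P(\underline{E})\cap [X]$.

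Next, I will invoke the localization theorem, which after inverting the non-trivial characters of $T$ identifies $A_*^T(X)_{\mathrm{loc}} \cong A_*^T(X^T)_{\mathrm{loc}}$. Concretely, every equivariant class $\alpha$ admits the expansion
$$\alpha \;=\; \sum_{F\subset X^T} (\iota_F)_*\!\left(\frac{\iota_F^*\alpha}{c^T_{d_F}(N_F X)}\right),$$
where $\iota_F:F\hookrightarrow X$ is the inclusion of a component of the fixed locus and $N_F X$ is its normal bundle. Specializing to $\alpha = P^T(\underline{E})\cap [X]_T$ and using the projection formula together with $\iota_F^*[X]_T = [F]_T$ (valid because $X$ is smooth and each $F$ is smoothly embedded), this yields
$$P^T(\underline{E})\cap [X]_T \;=\; \sum_F (\iota_F)_*\!\left(\frac{P^T(\underline{E}|_F)\cap [F]_T}{c^T_{d_F}(N_F X)}\right).$$
Pushing both sides down to a point and invoking functoriality of equivariant pushforward, namely $(\pi_X)_*^T\circ(\iota_F)_* = (\pi_F)_*^T$, produces the claimed equality
$$\int_X^T P^T(\underline{E})\cap [X]_T \;=\; \sum_F \int_F^T \frac{P^T(\underline{E}|_F)\cap [F]_T}{c^T_{d_F}(N_F X)}$$
in the localized equivariant Chow ring of a point.

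The principal subtlety, and in my view the main obstacle to a first-principles proof, is to justify the localization isomorphism rigorously: each individual summand on the right is a rational function in the equivariant parameters with poles along the $T$-characters occurring in $N_F X$, yet by construction their sum lies in the unlocalized ring $A_*^T(\mathrm{pt})$, so these poles must cancel. Once this is granted, applying the augmentation to the unlocalized left-hand side recovers $\int_X P(\underline{E})\cap [X]$, while the right-hand side of the formula is precisely the sum of localized contributions obtained above. The formula as stated in the theorem is therefore to be read as the assertion that the sum over $F$, although \emph{a priori} a rational expression, equals the ordinary integer produced from the non-equivariant left side.
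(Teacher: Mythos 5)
The paper offers no proof of this statement—it is quoted directly from Edidin--Graham \cite{EG} (Theorem 3)—and your outline reproduces the standard derivation given there: lift to equivariant Chow groups, apply the localization/self-intersection formula over the fixed locus, push forward to a point, and observe that the a priori rational sum is forced to be the integer computed non-equivariantly. Your sketch is correct modulo the localization theorem itself, which you appropriately identify as the essential input (and which is the content of \cite{B} and \cite{EG}), so there is nothing to object to.
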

We note that $X^T$ is also smooth, so that it has normal bundle $N_FX$. 
We additionally need the lemma below to apply Theorem \ref{Bott}.
\begin{lemma}[{\cite[Lemma 5.1.4]{Benjamin}}]
Let $X$ be a variety with a trivial $T$-action. Suppose $E_{\chi}\rightarrow X$ is a $T$-equivariant vector bundle of rank $r$ over $X$ and the $T$-action in $E_{\chi}$ is given by the character $\chi$. Then the $T$-equivariant Chern class of the vector bundle $E_{\chi}$ is
$$c_i^T(E_{\chi})=\sum_{j\leq i}\binom{r-j}{i-j}c_j(E_{\chi})\chi^{i-j}.$$
Furthermore, if $X$ is a point, it becomes
$$c_i^T(E_{\chi})=\binom{r}{i}\chi^i\in R_T,$$
since the only term contributed in the summation is $j=0$. Here $R_T\cong \mathbb{Z}\left[t_1,\ldots,t_n\right]$ is the $T$-equivariant Chow ring of a point.
\end{lemma}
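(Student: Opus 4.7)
The plan is to realize $E_\chi$ as a tensor product and apply the splitting principle in $T$-equivariant Chow theory. Since the $T$-action on $E_\chi$ is by a single character $\chi$, we can write $E_\chi \cong E \otimes L_\chi$, where $E$ denotes the underlying bundle equipped with the trivial $T$-action and $L_\chi$ is the equivariantly trivial line bundle whose fiber is the one-dimensional $T$-representation of weight $\chi$. Because $X$ has trivial $T$-action, the $T$-equivariant Chow group decomposes as $A^T_*(X) \cong A_*(X) \otimes_{\mathbb{Z}} R_T$, so the ordinary Chern classes $c_j(E_\chi) = c_j(E)$ lift unambiguously to their equivariant counterparts, while $c_1^T(L_\chi) = \chi \in R_T$.

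First I would invoke the equivariant splitting principle to formally write $c^T(E) = \prod_{k=1}^{r}(1+\alpha_k)$ for equivariant Chern roots $\alpha_1,\ldots,\alpha_r$. The tensor-product formula then identifies the equivariant Chern roots of $E_\chi = E \otimes L_\chi$ as $\alpha_k + \chi$, giving
$$c^T(E_\chi) = \prod_{k=1}^{r}(1+\alpha_k+\chi).$$

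Second I would expand this product and collect the degree-$i$ piece. Each factor contributes one of $\{1,\alpha_k,\chi\}$; a degree-$i$ monomial arises from choosing a subset $S \subset \{1,\ldots,r\}$ of size $j$ to contribute the $\alpha_k$ for $k \in S$, together with a subset of $i-j$ of the remaining $r-j$ indices to contribute $\chi$. Summing over all such choices yields
$$c_i^T(E_\chi) = \sum_{j=0}^{i}\binom{r-j}{i-j}\, e_j(\alpha_1,\ldots,\alpha_r)\, \chi^{i-j} = \sum_{j \le i}\binom{r-j}{i-j}\, c_j(E_\chi)\, \chi^{i-j},$$
where $e_j$ is the $j$-th elementary symmetric polynomial, identified with $c_j(E_\chi)$ via the splitting principle.

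Finally, specializing to $X$ a point forces $c_j(E_\chi) = 0$ for $j>0$, so only the $j=0$ term survives and returns $c_i^T(E_\chi) = \binom{r}{i}\chi^i \in R_T$. There is no real obstacle here; the only point deserving care is justifying the equivariant splitting principle and tensor-product formula when $X$ carries the trivial $T$-action, which is routine given the direct decomposition $A^T_*(X) \cong A_*(X) \otimes_{\mathbb{Z}} R_T$ that reduces everything to the non-equivariant calculation plus the single equivariant parameter $\chi$.
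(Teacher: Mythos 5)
Your proof is correct, and since the paper states this lemma only as a citation to Jones without reproducing an argument, there is nothing in the text for it to diverge from: the decomposition $E_\chi \cong E \otimes L_\chi$ with $c_1^T(L_\chi)=\chi$, followed by the splitting-principle expansion of $\prod_k(1+\alpha_k+\chi)$, is exactly the standard route (and the one taken in the cited source). The combinatorial count giving $\binom{r-j}{i-j}$ and the specialization to a point (where $c_j(E_\chi)=0$ for $j>0$) are both handled correctly.
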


Now we are ready to provide an example demonstrating the computation for the Chern-Mather class of a Schubert variety when $n=5$. Note that the set $Z_{\underline{\alpha}}^T$ of $T$-fixed points is finite in general \cite[Lemma 5.1.3]{Benjamin}. 
\begin{example}\label{ex:D}
Let $V$ be a vector space of dimension $10$ over $\mathbb{C}$, equipped with a quadratic form on it, and have the ordered basis 
$$e_1<\cdots<e_5<e_{\bar{5}}<\cdots<e_{\bar{1}}.$$ 

We deal with a Schubert variety $\mathbb{S}(\underline{\alpha})\subset OG''(5,V)$ of dimension $8$ with $\underline{\alpha}=(3,5)$ and compute the constant $\gamma_{\underline{\alpha},\underline{\beta}}$ for $\underline{\alpha}=(3,5),\underline{\beta}=(3,4)$. For $V_i=\langle e_1,\ldots,e_i \rangle$ for $1\leq i\leq 5$, we fix a complete isotropic flag 
$$V_\bullet=(0\subset V_1\subset \cdots\subset V_5\subset V_4^\perp\subset\cdots \subset V_1^\perp\subset V).$$
With the associated matrix $\mathfrak{H}=
\begin{bmatrix}
3 &5\\
1 & 1\\
\end{bmatrix}_{\underline{\alpha}}$ of $d=1$, the construction for the IH-small resolution of singularity for $\mathbb{S}(\underline{\alpha})$ leads to the locus $Z_{\underline{\alpha}}=Z_1$  as 
$$Z_{\underline{\alpha}}=\{(U_1,U_2)\;|\; 0\subset U_1\subset V_3,\;\;\mathrm{dim}(U_1\cap U_2)\geq 1\}\subset {OG}(1,V)\times {OG}''(5,V).$$

We proceed with Theorem \ref{CTZ} to reach the Chern class of the tangent bundle over $Z_{\underline{\alpha}}$ as
\begin{align*}
c(TZ_{\underline{\alpha}})&\cong c(\underline{U}_1^\vee\otimes (\underline{V}_3/\underline{U}_1))\cdot c(\mathrm{\wedge}^2(\underline{U}_2/\underline{U}_1)^\vee).
\end{align*}
As for the class of Schubert variety $\mathbb{S}(\underline{\beta})$ represented by the function $\widetilde{P}_{(2,1)}(U_2^\vee)$ in $A_*(OG''(n,V))$, we derive the dual Schubert class for $\widetilde{\mathbb{S}}(\underline{\beta})$ by the Pfaffian
\begin{align*}
\left[\widetilde{\mathbb{S}}(\underline{\beta})\right]=\widetilde{P}_{(4,3)}(\underline{U}_2^\vee),
\end{align*}
associated to the partition $\rho(4)\backslash (2,1)=(4,3)$ \cite[Page 13]{PR}. Combining all together, the coefficient $\gamma_{\underline{\alpha},\underline{\beta}}$ associated to $\underline{\alpha}$ and $\underline{\beta}$ is computed by the integration
$$\gamma_{\underline{\alpha},\underline{\beta}}=\int_{Z_{\underline{\alpha}}} ( c_1(\underline{U}_1^\vee\otimes (\underline{V}_3/\underline{U}_1))+c_1(\mathrm{\wedge}^2(\underline{U}_2/\underline{U}_1)^\vee))\cdot c_{4,3}(\underline{U}_2^\vee)\cap\left[Z_{\underline{\alpha}}\right].$$

In order to use the the Bott Residue Formula, let us describe the $T$-fixed points $Z_{\underline{\alpha}}^T$ of $Z_{\underline{\alpha}}$. Given $i_1\in\{1,2,3\}$ and $i_2,\ldots, i_5\in\{1,\ldots,5,\bar{5},\ldots,\bar{1}\}\backslash\{i_1, \bar{i_1}\}$ such that $e_{i_2}<\cdots< e_{i_5}$ and the number of barred integers in $\{i_1,\ldots, i_5\}$ are even, we have $24$ torus-fixed points $p_\bullet=(\langle e_{i_1}\rangle ,\langle e_{i_1},\ldots,e_{i_5}\rangle)$. For instance, if $i_1=1$ is taken, there are $8$ fixed points:
\begin{align*}
p_1:=&(\langle e_{1} \rangle\subset \langle e_1, e_2,e_3,e_4, e_5\rangle),\;\; p_2:=(\langle e_{1} \rangle\subset \langle e_1, e_2, e_3, e_{\bar{5}},e_{\bar{4}}\rangle),\\
 p_3:=&(\langle e_{1} \rangle\subset \langle e_1,e_2,e_4, e_{\bar{5}}, e_{\bar{3}}\rangle),\;\; p_4:=(\langle e_{1} \rangle\subset \langle e_1, e_2,e_5,e_{\bar{4}}, e_{\bar{3}}\rangle),\\
p_5:=&(\langle e_1 \rangle\subset \langle e_1,e_3,e_4, e_{\bar{5}},e_{\bar{2}}\rangle),\;\;p_6:=(\langle e_{1} \rangle\subset \langle e_1, e_3,e_5, e_{\bar{4}},e_{\bar{2}}\rangle),\\
p_7:=&(\langle e_{1} \rangle\subset \langle e_1,e_4,e_5, e_{\bar{3}}, e_{\bar{2}}\rangle),\;\; p_8:=(\langle e_{1} \rangle\subset \langle e_1, e_{\bar{5}}, e_{\bar{4}},e_{\bar{3}},e_{\bar{2}}\rangle).
\end{align*}

Suppose the weights of the $\mathbb{C}^*$-action on $V$ are $\left[w_1,\ldots,w_5,-w_5,\ldots,-w_1\right]=\left[1,\ldots,5,-5,\ldots,-1\right].$ Without loss of generality, we fix a $T$-fixed point $p_\bullet=(\langle e_{i_1}\rangle,\langle e_{i_1}, \ldots, e_{i_5}\rangle)$.  According to \eqref{eqn:tensor} and \eqref{eqn:ext}, the $\mathbb{C}^*$-equivariant Chern classes of the bundles restricted to a point are presented by
$$
c_1^{\mathbb{C}^*}(\underline{U}_1^\vee\otimes (\underline{V}_3/\underline{U}_1))=(-3w_{i_1})t,\quad c_1^{\mathbb{C}^*}(\mathrm{\wedge}^2(\underline{U}_2/\underline{U}_1)^\vee))=(-3w_{i_2}-\cdots-3w_{i_5})t,$$
\begin{equation*}\label{P43}
\widetilde{P}_{(4,3)}^{\mathbb{C}^*}(\underline{U}_2^\vee)=\dfrac{1}{2^{2}}(c_4^{\mathbb{C}^*}(\underline{U}_2^\vee)\cdot c_3^{\mathbb{C}^*}(\underline{U}_2^\vee)-2c_5^{\mathbb{C}^*}(\underline{U}_2^\vee)c_2^{\mathbb{C}^*}(\underline{U}_2^\vee)+c_6^{\mathbb{C}^*}(\underline{U}_2^\vee)c_1^{\mathbb{C}^*}(\underline{U}_2^\vee)-c_7^{\mathbb{C}^*}(\underline{U}_2^\vee)),
\end{equation*}
given the total $\mathbb{C}^*$-equivariant Chern class of $\underline{U}_2^\vee$ as 
$$c^{\mathbb{C}^*}(\underline{U}_2^\vee)=(1-w_{i_1}t)\cdot(1-w_{i_2}t)\cdot(1-w_{i_3}t)\cdot(1-w_{i_4}t)\cdot(1-w_{i_5}t).$$ In addition, because of $c_{d_f}^T(N_FX)=c_{\mathrm{dim}\;(X)}^T(TX)$, the denominator of the formula would be 
$$c_8^{\mathbb{C}^*}(TZ_{\underline{\alpha}})=3w_{i_1}^2\cdot(w_{i_2}+w_{i_3})\cdot(w_{i_2}+w_{i_4})\cdot(w_{i_2}+w_{i_5})\cdot(w_{i_3}+w_{i_4})\cdot(w_{i_3}+w_{i_5})\cdot(w_{i_4}+w_{i_5})t^8.$$
To that end, the single term for this point $p_\bullet$ of the Bott Residue formula applied yields the rational number
\begin{equation*}\label{p1}
\dfrac{(-3w_{i_1}-\cdots-3w_{i_5})\cdot \left({1/t^7} \cdot\widetilde{P}_{(4,3)}^{\mathbb{C}^*}(\underline{U}_2^\vee)\right)}{3w_{i_1}^2\cdot(w_{i_2}+w_{i_3})\cdot(w_{i_2}+w_{i_4})\cdot(w_{i_2}+w_{i_5})\cdot(w_{i_3}+w_{i_4})\cdot(w_{i_3}+w_{i_5})\cdot(w_{i_4}+w_{i_5})}.
\end{equation*}

Summing up the rational numbers over all the $24$ $T$-fixed points with the weights of $\mathbb{C}^*$-action on $V$ finally results in the constant
$$\gamma_{\underline{\alpha},\underline{\beta}}=6.$$

\end{example}
\begin{remark}
Since $\widetilde{Q}_{kk}$ vanishes for all $k$, any symmetric polynomials in $w^2_1,\ldots, w^2_n$ must set to be $0$ in any computations for the number $\gamma_{\underline{\alpha},\underline{\beta}}$. See \cite[Proposition 4.2]{PR} for details.
\end{remark}

Likewise we can accomplish the Chern-Mather class of a Schubert variety $\mathbb{S}(\underline{\alpha})\subset OG'(5,V)$ associated to $\underline{\alpha}$ as a sum indexed by $\underline{\beta}\subseteq\underline{\alpha}$. 
%
%
 Let us list partitions labelled for convenience as the followings:
 \begin{align*}
 (3,5)=\alpha_0, \quad& (3,4)=\beta_0,    \quad(1,4)=\beta_3, \quad(2,3,4,5)=\gamma_0,  \quad(1,2,3,5)=\gamma_3,\\
 (2,5)=\alpha_1, \quad &(2,4)=\beta_1,   \quad(1,3)=\beta_4,  \quad(1,3,4,5)=\gamma_1, \quad(1,2,3,4)=\gamma_4 \\
 (1,5)=\alpha_2,  \quad& (2,3)=\beta_2,  \quad(1,2)=\beta_5, \quad(1,2,4,5)=\gamma_2.
 \end{align*}

 Schubert varieties in $OG''(5,\mathbb{C}^{10})$ that admit their IH-small resolutions are the one associated to the $5$ partitions 
 \begin{equation*}
 (3,5)=\alpha_0, (2,5)=\alpha_1, (1,5)=\alpha_2, (1,3)=\beta_4,(1,2)=\beta_5.
 \end{equation*}

 In Table \ref{Table}, the left most column indicates the indices for the Schubert varieties having the Sankaran and Vanchinathan's IH-small resolution and the top row is for all indices $\underline{\beta}$ which is less than equal to the corresponding index $\underline{\alpha}$ in the first column. Using these partitions, the coefficients $\gamma_{\underline{\alpha},\underline{\beta}}$ of the Chern-Mather classes $\left[c_M(\mathbb{S}(\underline{\alpha}))\right]|_{\underline{\beta}}$ are listed below so that the Mather class of $\mathbb{S}(\underline{\alpha})$ is calculated by the sum of each row that corresponds to the classes of sub-Schubert varieties contained in $\mathbb{S}(\underline{\alpha})$.

\begin{table}[h]
\caption{Chern-Mather classes of Schubert varieties in $OG''(5,\mathbb{C}^{10})$}
\centering
\def\arraystretch{1.5}
$\begin{array}{cccccccccccccccc}
\hline
 	    &  \alpha_0&\beta_0 &\alpha_1& \beta_1&\alpha_2&\beta_2&\beta_3&\beta_4&\gamma_0&\beta_5&\gamma_1&\gamma_2&\gamma_3&\gamma_4  \\ \hline
\alpha_0 & 1 & 6&6  &34 &17&60&88&174&72&144&204&204&84&24 \\ 
 \alpha_1& \cdot & \cdot& 1 & 6 &5&16&28&68&24&70&92&112&52&16\\
 \alpha_2&\cdot  & \cdot & \cdot &\cdot &1&\cdot&6&16&\cdot&24&24 &44&24&8\\
 \beta_4&  \cdot&\cdot & \cdot &\cdot &\cdot&\cdot&\cdot&1&\cdot &4&4&14&14&8\\ 
  \beta_5& \cdot &\cdot & \cdot & \cdot&\cdot&\cdot&\cdot&\cdot&\cdot&1&\cdot& 4&6&4\\ \hline
\end{array}$
\label{Table}
\end{table}


One may represent a Schubert variety $\mathbb{S}(\underline{\alpha})$ by a Young diagram that corresponds to a partition $\lambda$ (or Young diagrams) as its codimension or a cohomology class $\left[\mathbb{S}(\underline{\beta})\right]=\widetilde{P}_{\lambda}(U^\vee)$, which appears in some other literatures, for instance \cite{H,MS}.

We notice that there is no such a direct pushforward of the IH-small resolution of singularity $Z_{\underline{\alpha}}\subset X$ to the type D flag variety as in the ordinary (type A) cases \cite[Section 5.4]{Benjamin}, attributed to the limitation of the IH-small resolution by Sankaran and Vanchinathan.

\section{Kazhdan-Lusztig classes of Schubert varieties}\label{sec5}
The characteristic cycle of $\mathcal{IC}$-sheaves over Schubert varieties in the Lagrangian Grassmannians may not be irreducible in contrast to Schubert varieties in the even orthogonal Grassmannians. This prevents us from directly handling the Chern-Mather classes of Schubert varieties in Grassmannians of type C. Instead, we establish Kazhdan-Lusztig classes of Schubert varieties in the Lagrangian Grassmannians on account of Theorem \ref{thm:KL} in this section. We also discuss Kazhdan-Lusztig classes and the Mather classes of Schubert varieties in the odd orthogonal Grassmannians later this section.

\subsection{Type C}\label{s5.1}

Let $V$ be a vector space of dimension $2n$ over $\mathbb{C}$, equipped with a nondegenerate symplectic form. We take a strictly increasing sequence $\underline{\alpha}=(1\leq\alpha_1<\alpha_2<\cdots<\alpha_s\leq n)$ of nonnegative integers. Inside a isotropic partial flag $Fl^C(\underline{\alpha};V)$ of type C, let $$V_{\alpha_1}\subset\cdots\subset V_{\alpha_s}\subset V$$
be an isotropic partial flag such that $\mathrm{dim}(V_{\alpha_i})=\alpha_i$. 
The Schubert variety $\mathbb{S}(\underline{\alpha})$ is defined to be 
\begin{equation*}\label{typeD:sch}
\mathbb{S}(\underline{\alpha})=\{L\;|\;\mathrm{dim}(L\cap V_{\alpha_i})\geq i\;\text{for all}\;1\leq  i\leq s\}
\end{equation*}
of $\mathrm{dim}\;\mathbb{S}(\underline{\alpha})=\sum_{i=1}^s\alpha_i+(n+1)(n-s)-\dfrac{1}{2}n(n+1)$ in the Lagrangian Grassmannian $LG(n, V)=Sp(2n)/P$ parametrizing the maximal isotropic subspaces of $V$.
Analogously this locus is the closure of the Schubert cell $\mathbb{S}(\underline{\alpha})^\circ$ in which the equality holds, and a Schubert variety $\mathbb{S}(\underline{\beta})$ associated to $\underline{\beta}=(1\leq \beta_1<\beta_2<\cdots<\beta_r\leq n)$ is included in $\mathbb{S}(\underline{\alpha})$ if $s\leq r$ and $\alpha_1\geq \beta_1,\ldots,\alpha_s\geq\beta_s$.


The construction of a IH-small resolution is exactly akin to the one for type D, but $OG'(n,V)$ (resp. $OG''(n,V)$) is replaced by $LG(n,V)$. That is, $Z_{\underline{\alpha}}\subset X^C:=\left(\prod_{i=1}^{d}LG(k_i,V)\right)\times LG(n,V)$ is the IH-small resolution of singularity for $\mathbb{S}(\underline{\alpha})$ where $W_j^L\subset U_j\subset W_j^R$ for $1\leq j\leq d$ and $W_{d+1}^L\subset U_{d+1}\subset V$. We summarize the facts concerning the IH-small resolution $Z_{\underline{\alpha}}\rightarrow \mathbb{S}(\underline{\alpha})$ for Schubert varieties in $LG(n,V)$ by Sankaran and Vanchinathan with assumptions for the IH-small resolution to exist.

\begin{theorem}[Sankaran and Vanchinathan]\label{thm:small}
Let $\mathbb{S}(\underline{\alpha})\subset LG(n,V)$ be a Schubert variety associated to $\underline{\alpha}=(1\leq\alpha_1<\alpha_2<\cdots<\alpha_s\leq n)$ and $\mathfrak{H}=\begin{bmatrix}
q_1 &\cdots& q_{d}\\
a_1 & \cdots& a_d\\
\end{bmatrix}_{\underline{\alpha}}.$
Let $\alpha_s\leq n-s$, $q_d<n+1-a_d$ and $(a_d+\cdots+a_i)-(b_{d-1}+\cdots+b_i)<n+1-q_d$ for $i\geq 1$.
Then
\begin{enumerate}
\item For such $\underline{\alpha}$, the locus $Z_{\underline{\alpha}}$ is a nonsingular projective variety.
\item $\pi:Z_{\underline{\alpha}}\rightarrow LG(n,V)$ is a proper mapping onto $\mathbb{S}(\underline{\alpha})$ and is isomorphic over $\mathbb{S}(\underline{\alpha})^\circ$. Thus it is a resolution of singularities.
\item $\pi:Z_{\underline{\alpha}}\rightarrow \mathbb{S}(\underline{\alpha})$ is the IH-small resolution. 
\end{enumerate}
\end{theorem}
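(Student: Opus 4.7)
The plan is to mirror the construction and proofs already carried out for type D in Section 3, replacing the even orthogonal Grassmannian with $LG(n,V)$ at the final stage of the tower. Recall that $Z_{\underline{\alpha}}$ is built iteratively as a tower $Z^{(1)} \to \cdots \to Z^{(d+1)} = Z_{\underline{\alpha}}$, where each $Z^{(j)} \to Z^{(j-1)}$ is a projection encoding the incidence relation $W_j^L \subset U_j \subset W_j^R$ for $1 \le j \le d$, and $W_{d+1}^L \subset U_{d+1} \subset V$ with $U_{d+1}$ Lagrangian at the last step.

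For part (1), I would argue by induction on $j$. The assumption $q_d < n+1-a_d$ forces all the subspaces $W_i^R$ to sit inside a fixed Lagrangian subspace, so for $2 \le j \le d$ the fiber of $Z^{(j)} \to Z^{(j-1)}$ is the ordinary Grassmannian $Gr(l_j, W_j^R/W_j^L)$, which is smooth and projective. For $j = d+1$, since any isotropic $U_{d+1} \supset W_{d+1}^L$ automatically satisfies $U_{d+1} \subset (W_{d+1}^L)^\perp$, the fiber is the Lagrangian Grassmannian $LG(l_{d+1}, (W_{d+1}^L)^\perp/W_{d+1}^L)$, again smooth and projective. Smoothness and projectivity of $Z_{\underline{\alpha}}$ then follow as it is built from smooth projective bundles over a smooth projective base $Z^{(1)} \cong Gr(l_1, W_1^R/W_1^L)$.

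For part (2), properness of $\pi$ is immediate from projectivity of $Z_{\underline{\alpha}}$. To see that $\pi$ is birational onto $\mathbb{S}(\underline{\alpha})$, I would show that over the open cell $\mathbb{S}(\underline{\alpha})^\circ$, the fiber is a single point: by definition of $\mathbb{S}(\underline{\alpha})^\circ$ the intersections $U \cap V_{q_j}$ have exact dimension $a_1 + \cdots + a_j$, which in turn forces each intermediate subspace $U_j$ in the construction to equal $U \cap V_{q_j}$ uniquely. Surjectivity of $\pi$ onto $\mathbb{S}(\underline{\alpha})$ follows from the iterative construction: the image at each stage is precisely the Schubert variety associated with the updated matrix $\mathfrak{H}_i$.

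Part (3), the IH-smallness, is the main obstacle and where type C genuinely diverges from type D through the shift $n \mapsto n+1$ (reflecting the difference between $\dim LG(n,V) = n(n+1)/2$ and $\dim OG(n,V) = n(n-1)/2$). The strategy is to compute the fiber $\pi^{-1}(U)$ for $U$ in an arbitrary cell $\mathbb{S}(\underline{\beta})^\circ$ by the same capacity-based recursion as in Section 3, producing a tower of ordinary Grassmannians capped by a Lagrangian Grassmannian. Writing the fiber dimension $\dim \pi^{-1}(U)$ in terms of the $(\mathbf{a}, \mathbf{b})$-sequences and the capacity $\mathbf{c}$, and the codimension of $\mathbb{S}(\underline{\beta})$ in $\mathbb{S}(\underline{\alpha})$ in terms of the area between the piecewise-linear graphs $y = \alpha(x)$ and $y = \beta(x)$, one reduces IH-smallness
\[
2\dim \pi^{-1}(U) < \operatorname{codim}_{\mathbb{S}(\underline{\alpha})} \mathbb{S}(\underline{\beta})^\circ
\]
to a purely combinatorial inequality on $\mathbf{a}, \mathbf{b}, \mathbf{c}$. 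The two hypotheses $q_d < n+1-a_d$ and $(a_d + \cdots + a_i) - (b_{d-1} + \cdots + b_i) < n+1 - q_d$ are exactly what is needed for this inequality to hold at the final, Lagrangian stage — the extra $+1$ compared to type D compensates for the extra $\binom{a_d+1}{2}$ coming from the Lagrangian fiber (which is controlled by $\wedge^2$ or $\operatorname{Sym}^2$ of the tautological bundle, cf.\ the main theorem). I would complete the estimate by induction on $d$, reducing via the step $\mathfrak{H} \mapsto \mathfrak{H}_1$ to a smaller problem, exactly as in Sankaran–Vanchinathan's treatment of the type D case, with all arithmetic inequalities shifted by one to accommodate type C.
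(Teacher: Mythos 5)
This theorem is quoted in the paper as a result of Sankaran and Vanchinathan (their paper \emph{Small resolutions of Schubert varieties and Kazhdan--Lusztig polynomials}); the paper under review gives no proof of it, so there is no internal argument to compare yours against. Judged on its own, your outline for parts (1) and (2) is sound and matches the strategy the paper does spell out in the type D analogue (Proposition \ref{D:prop}): the tower $Z^{(1)}\to\cdots\to Z^{(d+1)}$ consists of ordinary Grassmannian bundles capped by a Lagrangian Grassmannian bundle $LG(l_{d+1},(W_{d+1}^L)^\perp/W_{d+1}^L)$, and birationality follows because over $\mathbb{S}(\underline{\alpha})^\circ$ each $U_j$ is forced to be $U\cap V_{q_j}$. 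One small logical wrinkle: you invoke $q_d<n+1-a_d$ to prove part (1), but that hypothesis belongs only to part (3); parts (1) and (2) are asserted for all admissible $\underline{\alpha}$. (The paper commits the same conflation in the proof of Proposition \ref{D:prop}, so this is forgivable, but it should be flagged.)

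The genuine gap is in part (3), which is the entire content of the theorem. You correctly identify the shape of the argument --- express $\dim\pi^{-1}(U)$ for $U\in\mathbb{S}(\underline{\beta})^\circ$ via the capacity sequence, express $\operatorname{codim}_{\mathbb{S}(\underline{\alpha})}\mathbb{S}(\underline{\beta})^\circ$ via the graphs $y=\alpha(x)$ and $y=\beta(x)$, and verify $2\dim\pi^{-1}(U)<\operatorname{codim}$ --- but you never actually produce either formula, nor the inductive step through $\mathfrak{H}\mapsto\mathfrak{H}_1$ that would close the estimate. The assertion that the two hypotheses $q_d<n+1-a_d$ and $(a_d+\cdots+a_i)-(b_{d-1}+\cdots+b_i)<n+1-q_d$ are ``exactly what is needed'' is the conclusion, not an argument for it. Your accounting of the type C shift is also imprecise: the last fiber is a Lagrangian Grassmannian of rank $l_{d+1}=n-\dim W_{d+1}^L$, of dimension $\binom{l_{d+1}+1}{2}$, not an ``extra $\binom{a_d+1}{2}$''; the discrepancy with type D at that stage is $l_{d+1}$, coming from $\operatorname{Sym}^2$ versus $\wedge^2$ of a rank-$l_{d+1}$ bundle. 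As a reconstruction of the Sankaran--Vanchinathan proof the plan is pointed in the right direction, but the combinatorial heart of the smallness estimate is missing.
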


The notations, which are not specified are adapted from Section \ref{sec4} and will be used for the rest of this article.

The proof of \cite[Proposition 4.2.6]{Benjamin} used that a point $U$ is smooth if and only if it is rationally smooth for Schubert varieties in type ADE \cite{CK}. Rationally smoothness approximates the smoothness via cohomological criteria and is related to the stalk Euler characteristic of the intersection cohomology sheaf. The equivalences in Proposition \ref{prop:equiv} are valid even for Schubert varieties in Lagrangian Grassmannians (type C), since smoothness implies the rational smoothness as for Schubert varieties. Rationally smoothness of the point $U$ implies the statement \eqref{a} of Proposition \ref{prop:equiv} as shown in the proof of \cite[Proposition 4.2.6]{Benjamin}.
In general, smoothness and rational smoothness for Schubert varieties in type C are not equivalence. Especially a rationally smooth Schubert variety is smooth only when it corresponds to an element that is $1\bar{2}$-avoiding in the Weyl group $W^C_n$ of type C (or an element embedded to a $4231$-avoiding in the Weyl group $W^A_{2n}$ of type A) \cite[Addendum 13.3]{BL}.


%
%

Let $Y$ be a smooth variety equipped with an isotropic vector bundle $\underline{\mathscr{E}}\rightarrow Y$ with respect to the symplectic form. Then we take the isotropic Grassmannian bundle $IG(k,\underline{V})$ of dimension $k$ subspaces of the fibers of $\underline{\mathscr{E}}$ over $Y$ with a projection map $p:IG(k,\underline{\mathscr{E}})\rightarrow Y$. 
We note that if $k=n$, the isotropic Grassmannian $IG(k,\underline{\mathscr{E}})$ is called the Lagrangian Grassmannian bundle $LG(n,\underline{\mathscr{E}})$. The following lemma is a widely renowned fact about the decomposition of the tangent bundle over a smooth variety, which is applied to find a description for the Chern class of a tangent bundle of the resolution of singularity. 

\begin{lemma}\label{C:lem}
 The relative tangent bundle $T_{IG(k,\underline{\mathscr{E}})/Y}$ fits into a split exact sequence
$$0\rightarrow \underline{\mathscr{S}}^\vee\otimes\underline{\mathscr{S}}^\perp/\underline{\mathscr{S}}\rightarrow T_{IG(k,\underline{\mathscr{E}})/Y}\rightarrow \mathrm{Sym}^2\;\underline{\mathscr{S}}^\vee\rightarrow 0,$$
so that
$$T_{IG(k,\underline{\mathscr{E}})/Y}\cong ( \underline{\mathscr{S}}^\vee\otimes\underline{\mathscr{S}}^\perp/\underline{\mathscr{S}})\oplus \mathrm{Sym}^2\;\underline{\mathscr{S}}^\vee.$$

\end{lemma}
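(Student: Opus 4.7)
The plan is to mirror the proof of Lemma \ref{D:lem}, replacing the quadratic form with the symplectic form and tracking how the symmetry type propagates. I would begin with the natural embedding $T_{IG(k,\underline{\mathscr{E}})/Y} \hookrightarrow T_{Gr(k,\underline{\mathscr{E}})/Y} = \underline{\mathscr{S}}^\vee \otimes \underline{\mathscr{E}}/\underline{\mathscr{S}}$ induced by viewing the isotropic Grassmannian bundle as a subbundle of the ordinary Grassmannian bundle of $k$-planes in $\underline{\mathscr{E}}$. The symplectic form identifies $\underline{\mathscr{E}}/\underline{\mathscr{S}}^\perp \cong \underline{\mathscr{S}}^\vee$, so the composition $\psi: \underline{\mathscr{E}}/\underline{\mathscr{S}} \twoheadrightarrow \underline{\mathscr{E}}/\underline{\mathscr{S}}^\perp \cong \underline{\mathscr{S}}^\vee$ yields a bundle map
$$\phi := \iota \otimes \psi : \underline{\mathscr{S}}^\vee \otimes \underline{\mathscr{E}}/\underline{\mathscr{S}} \longrightarrow \underline{\mathscr{S}}^\vee \otimes \underline{\mathscr{S}}^\vee,$$
whose kernel is exactly $\underline{\mathscr{S}}^\vee \otimes \underline{\mathscr{S}}^\perp/\underline{\mathscr{S}}$.

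The heart of the argument is to show that $T_{IG(k,\underline{\mathscr{E}})/Y} = \phi^{-1}(\mathrm{Sym}^2 \underline{\mathscr{S}}^\vee)$. Pointwise on fibers, given an isotropic subspace $\Lambda$ and $\eta \in \mathrm{Hom}(\Lambda, \mathscr{E}/\Lambda)$ lifted to $\mathrm{Hom}(\Lambda, \mathscr{E})$, the first-order deformation $\Lambda_\epsilon = \{v + \epsilon\,\eta(v)\;|\;v \in \Lambda\}$ remains isotropic modulo $\epsilon^2$ precisely when $\omega(v, \eta(w)) + \omega(\eta(v), w) = 0$ for every $v,w \in \Lambda$. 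Because $\omega$ is \emph{alternating}, this rearranges to $\omega(v, \eta(w)) = \omega(w, \eta(v))$, i.e., the bilinear form $(v,w)\mapsto \omega(v,\eta(w))$ on $\Lambda$ is \emph{symmetric}. This is exactly the condition that $\phi(\eta)$ lies in $\mathrm{Sym}^2 \Lambda^\vee$, and it is the one place where the symplectic case diverges from the orthogonal case (where the same computation yields antisymmetry and hence $\wedge^2$).

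With this in hand, the restriction $\phi|_{T_{IG(k,\underline{\mathscr{E}})/Y}}$ is a surjection onto $\mathrm{Sym}^2 \underline{\mathscr{S}}^\vee$ with kernel $\underline{\mathscr{S}}^\vee \otimes \underline{\mathscr{S}}^\perp/\underline{\mathscr{S}}$, giving the stated short exact sequence. In characteristic zero, $\mathrm{Sym}^2 \underline{\mathscr{S}}^\vee$ is canonically a direct summand of $\underline{\mathscr{S}}^\vee \otimes \underline{\mathscr{S}}^\vee$ (with complement $\wedge^2 \underline{\mathscr{S}}^\vee$), and composing the resulting projection with a lift provides the splitting, yielding the claimed direct-sum decomposition. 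The only delicate step is the infinitesimal computation above — verifying that the alternating property of $\omega$ translates into symmetry (rather than antisymmetry) of the induced form on $\Lambda$; the rest is bookkeeping strictly parallel to Lemma \ref{D:lem}.
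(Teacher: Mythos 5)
Your proposal is correct and follows essentially the same route as the paper, which simply defers to the argument of Lemma \ref{D:lem} (and to Park's Lemma 3.1), i.e., realizing $T_{IG(k,\underline{\mathscr{E}})/Y}$ inside $\underline{\mathscr{S}}^\vee\otimes\underline{\mathscr{E}}/\underline{\mathscr{S}}$ as $\phi^{-1}(\mathrm{Sym}^2\underline{\mathscr{S}}^\vee)$ for $\phi=\iota\otimes\psi$. Your explicit first-order isotropy computation showing that the alternating form forces \emph{symmetry} of $(v,w)\mapsto\omega(v,\eta(w))$ is exactly the detail the paper leaves implicit, and it is carried out correctly.
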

\begin{proof}
The lemma follows by the proof contained in \cite[Lemma 3.1]{Park} motivated by \cite{Harris}. We take an even dimensional isotropic vector bundle on $Y$ instead of the complex vector space, equipped with a nondegenerate symplectic form. (c.f. see Lemma \ref{D:lem} for details.)
\end{proof}

According to \cite[Section 6]{Fulton92} as before, we explicitly state the following corollary implicitly contained in the proof of \cite[Lemma 3.1]{Park}.
\begin{corollary}[{\cite{Park}}]
For $k\leq n$, let $IG(k,\mathscr{E})$ be an isotropic Grassmannian of $k$-planes in a vector space $\mathscr{E}$ of dimension $2n$. Let $\Lambda\in IG(k,\mathscr{E})$ be a point. Then the tangent space $T_\Lambda IG(k,\mathscr{E})$ of the isotropic Grassmannian at $\Lambda$ is
$$T_\Lambda IG(k,\mathscr{E})=(\Lambda^\vee\otimes\Lambda^\perp/\Lambda)\oplus \mathrm{Sym}^2\Lambda^\vee.$$
\end{corollary}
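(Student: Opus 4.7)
The plan is to deduce this corollary as the pointwise specialization of the preceding Lemma \ref{C:lem}. Take the smooth base $Y$ to be a single point $\mathrm{Spec}\,\mathbb{C}$. Then the isotropic vector bundle $\underline{\mathscr{E}}\to Y$ degenerates to the single vector space $\mathscr{E}$ of dimension $2n$ equipped with its nondegenerate symplectic form, and the isotropic Grassmannian bundle $IG(k,\underline{\mathscr{E}})\to Y$ becomes the classical isotropic Grassmannian $IG(k,\mathscr{E})$. In this situation the relative tangent bundle $T_{IG(k,\underline{\mathscr{E}})/Y}$ coincides with the absolute tangent bundle $TIG(k,\mathscr{E})$, since $TY=0$.

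Next, I would interpret the tautological isotropic subbundle $\underline{\mathscr{S}}$ in this case. Its fiber over a point $\Lambda \in IG(k,\mathscr{E})$ is precisely the $k$-plane $\Lambda$ itself, and analogously $\underline{\mathscr{S}}^\perp|_\Lambda = \Lambda^\perp$ under the symplectic form on $\mathscr{E}$. Passing to the fiber at $\Lambda$ in the split exact sequence of Lemma \ref{C:lem} therefore gives
\[
T_\Lambda IG(k,\mathscr{E}) \;\cong\; \bigl(\Lambda^\vee \otimes \Lambda^\perp/\Lambda\bigr)\,\oplus\,\mathrm{Sym}^2\Lambda^\vee,
\]
which is exactly the claimed identification.

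There is really no obstacle: the corollary is a direct fiberwise reading of the lemma. The only subtle point worth verifying is that the splitting of the short exact sequence in Lemma \ref{C:lem}, which was obtained intrinsically from the symplectic form, is natural enough to restrict to any fiber; this is automatic because the quotient map $T_{IG(k,\underline{\mathscr{E}})/Y}\twoheadrightarrow \mathrm{Sym}^2\underline{\mathscr{S}}^\vee$ and the kernel $\underline{\mathscr{S}}^\vee\otimes\underline{\mathscr{S}}^\perp/\underline{\mathscr{S}}$ are constructed entirely from the tautological bundle and the fixed symplectic form, both of which make perfect sense at the single fiber $\Lambda$. Thus no further argument beyond invoking Lemma \ref{C:lem} with $Y$ a point is required.
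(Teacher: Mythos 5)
Your proposal is correct and matches the paper's intended argument: the corollary is obtained precisely by specializing Lemma~\ref{C:lem} to the case where $Y$ is a point, so that the relative tangent bundle becomes $TIG(k,\mathscr{E})$ and the tautological subbundle restricts to $\Lambda$ at each point. The fiberwise reading of the split exact sequence is exactly the content the paper attributes to \cite[Section 6]{Fulton92} and \cite[Lemma 3.1]{Park}.
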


We recall $X^C:=\left(\prod_{i=1}^{d}LG(k_i,V)\right)\times LG(n,V)$. As in type D, let $X^{(j)}:=\prod_{i=1}^jIG(k_i,V)$ and $\rho_j:X^C\rightarrow X^{(j)}$ be the projection sending $(U_1,\ldots, U_{d+1})$ to $(U_1,\ldots, U_j)$ for $1\leq j\leq d+1$. Let $Z^{(j)}=\rho_j(Z_{\underline{\alpha}})$, with the projection $Z^{(j)}\rightarrow Z^{(l)}$ for $j>l$. We can similarly deduce that $Z^{(j)}$ is a (ordinary or isotropic) Grassmannian bundle on $Z^{(j-1)}$. 
\begin{theorem}\label{CTZ_C}
Let $Z_{\underline{\alpha}}$ be the IH-small resolution $Z_{\underline{\alpha}}$ of singularity for a Schubert variety $\mathbb{S}(\underline{\alpha})\subset LG(n,V)$ associated to $\underline{\alpha}\in W^P$. The Chern class of the tangent bundle $TZ_{\underline{\alpha}}$ is
$$c(TZ_{\underline{\alpha}})=\prod_{i=1}^{d}c((\underline{U}_i/\underline{W}_i^L)^\vee\otimes(\underline{W}_i^R/\underline{U}_i))c\left(\mathrm{Sym}^2(\underline{U}_{d+1}/\underline{W}_{d+1}^L)^\vee\right)$$
in terms of universal bundles over $X^C$.
\end{theorem}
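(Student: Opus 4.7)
The plan is to mirror the proof of Theorem \ref{CTZ} for type D, with the single structural substitution forced by the symplectic geometry of Lagrangian Grassmannians: the wedge-square from Lemma \ref{D:lem} is replaced by the symmetric-square from Lemma \ref{C:lem}. The formula is multiplicative because $Z_{\underline{\alpha}}$ is built as an iterated tower of Grassmannian bundles $Z^{(1)} \to \mathrm{pt}$ and $Z^{(j)} \to Z^{(j-1)}$ for $2 \leq j \leq d+1$, so applying the short exact sequence
\[
0 \longrightarrow \rho_j^{*}TZ^{(j-1)} \longrightarrow TZ^{(j)} \longrightarrow T_{Z^{(j)}/Z^{(j-1)}} \longrightarrow 0
\]
and the Whitney formula at each stage will reduce the computation of $c(TZ_{\underline{\alpha}})$ to computing each relative tangent bundle.

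For the first $d$ factors, the analog of Proposition \ref{D:prop} in type C gives that $Z^{(j)} \to Z^{(j-1)}$ for $1 \leq j \leq d$ is an \emph{ordinary} Grassmannian bundle with fiber $Gr(l_j, W_j^R/W_j^L)$, because the hypotheses of Theorem \ref{thm:small} force all the relevant $W_j^R$ to sit inside the trivial Lagrangian $V_n$, so no symplectic constraint is seen on these fibers. Via the identification \eqref{D:eq}, each relative tangent bundle is $(\underline{U}_j/\underline{W}_j^L)^\vee \otimes (\underline{W}_j^R/\underline{U}_j)$, producing the first $d$ factors of the claimed formula exactly as in Jones \cite[Theorem 4.3.3]{Benjamin} and Theorem \ref{CTZ}.

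For the last step $Z^{(d+1)} \to Z^{(d)}$, I would argue that this is a \emph{Lagrangian} Grassmannian bundle: the fiber consists of Lagrangian subspaces $U_{d+1}$ of $V$ containing $W_{d+1}^L$, which is exactly $LG(l_{d+1}, (W_{d+1}^L)^\perp/W_{d+1}^L)$ since the ambient symplectic space on which we take Lagrangians is the symplectic reduction $(\underline{W}_{d+1}^L)^\perp/\underline{W}_{d+1}^L$. Applying Lemma \ref{C:lem} with $\underline{\mathscr{S}} = \underline{U}_{d+1}/\underline{W}_{d+1}^L$ gives
\[
T_{Z^{(d+1)}/Z^{(d)}} \cong \bigl(\underline{\mathscr{S}}^\vee \otimes \underline{\mathscr{S}}^\perp/\underline{\mathscr{S}}\bigr) \oplus \mathrm{Sym}^2\,\underline{\mathscr{S}}^\vee.
\]
The key observation, which is the main (and only mildly subtle) point, is that $\underline{\mathscr{S}}$ is a \emph{Lagrangian} subbundle of the symplectic reduction, so $\underline{\mathscr{S}}^\perp = \underline{\mathscr{S}}$ there and the first summand vanishes. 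This leaves $T_{Z^{(d+1)}/Z^{(d)}} \cong \mathrm{Sym}^2(\underline{U}_{d+1}/\underline{W}_{d+1}^L)^\vee$, whose total Chern class is the final factor.

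Assembling the pieces through the Whitney sum formula applied inductively along the tower yields
\[
c(TZ_{\underline{\alpha}}) = \prod_{i=1}^{d} c\bigl((\underline{U}_i/\underline{W}_i^L)^\vee \otimes (\underline{W}_i^R/\underline{U}_i)\bigr) \cdot c\bigl(\mathrm{Sym}^2(\underline{U}_{d+1}/\underline{W}_{d+1}^L)^\vee\bigr),
\]
as claimed. No step is genuinely hard; the only place to exercise care is verifying that the Lagrangian reduction hypothesis applies cleanly at the last stage so that the $\underline{\mathscr{S}}^\perp/\underline{\mathscr{S}}$ summand really drops out, which is exactly where type C differs from type D (where $\wedge^2$ appears) and type B (where both summands survive).
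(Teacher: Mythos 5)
Your proposal is correct and follows essentially the same route as the paper: the iterated Grassmannian-bundle tower with the Whitney formula, ordinary Grassmannian fibers for the first $d$ stages, and Lemma \ref{C:lem} at the last stage with the observation that $\underline{U}_{d+1}/\underline{W}_{d+1}^L$ is Lagrangian in the symplectic reduction $(\underline{W}_{d+1}^L)^\perp/\underline{W}_{d+1}^L$, so the $\underline{\mathscr{S}}^\vee\otimes\underline{\mathscr{S}}^\perp/\underline{\mathscr{S}}$ summand drops out. If anything, your explicit justification that $\underline{\mathscr{S}}^\perp=\underline{\mathscr{S}}$ is slightly cleaner than the paper's terse appeal to the quotient being ``trivial.''
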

\begin{proof}
To be precise $Z^{(j)}$ is a Grassmannian bundle over $Z^{(j-1)}$ for $1\leq j\leq d$ and $Z^{(d+1)}$ is an isotropic Grassmannian bundle with respect to the symplectic form over $Z^{(d)}$. We thus have 
$$c(TZ^{(d)})=\prod_{i=1}^dc((\underline{U}_i/\underline{W}_i^L)^\vee\otimes(\underline{W}_i^R/\underline{U}_i))$$
and the projection
$$Z^{(d+1)}\cong IG(l_{d+1},(\underline{W}_{d+1}^L)^\perp/\underline{W}_{d+1}^R)\rightarrow Z^{(d)}$$ via the restriction of $\phi:X^{(d+1)}\rightarrow X^{(d)}$. 
Since Lemma \ref{C:lem} gives rise to a canonical isomorphism
\begin{align*}
T_{IG(l_{d+1},(\underline{W}_{d+1}^L)^\perp/\underline{W}_{d+1}^R)/Z^{(d)}}\cong &((\underline{U}_{d+1}/\underline{W}_{d+1}^L)^\vee\otimes((\underline{U}_{d+1}/\underline{W}_{d+1}^L)^\perp/\underline{U}_{d+1}/\underline{W}_{d+1}^L)\\
&\oplus Sym^2(\underline{U}_{d+1}/\underline{W}_{d+1}^L)^\vee
\end{align*}
for the universal subbundle $\underline{U}_{d+1}/\underline{W}_{d+1}^L$ of $IG(l_{d+1},(\underline{W}_{d+1}^L)^\perp/\underline{W}_{d+1}^R)$, the Chern class of the relative tangent bundle would have to be
$$c(T_{IG(l_{d+1},(\underline{W}_{d+1}^L)^\perp/\underline{W}_{d+1}^R)/Z^{(d)}})\cong c(Sym^2(\underline{U}_{d+1}/\underline{W}_{d+1}^L)^\vee)$$
by the same argument in the proof of Theorem \ref{CTZ}.
\end{proof}

The Schubert class {\cite[Theorem 2.1]{PR}} for some partition $\underline{\alpha}$ in the Chow group $A_*(LG(n,V))$ is 
$$\left[\mathbb{S}(\underline{\alpha})\right]=\widetilde{Q}_{\underline{\alpha}}(U^\vee),$$
 where $\underline{U}$ is the tautological subbundle on $LG(n,V)$. These classes of Schubert varieties form a basis of the ring $\Gamma$ in \eqref{eq:gamma}.
We are now in the position to calculate integrals for coefficients in the Schubert class of the pushforward $\pi_*c_{SM}(Z_{\underline{\alpha}})$ due to Theorem \ref{Bott}.

\begin{theorem}\label{thm:cst_C}
Let $\underline{U}$ be the pullback of the universal tautological subbundle on $LG(n,V)$. The coefficient $\gamma_{\underline{\alpha}, \underline{\beta}}$ of the Schubert class $\left[\mathbb{S}(\underline{\beta})\right]$ in $\pi_*c_{SM}(Z_{\underline{\alpha}})$ is computed by 
$$\gamma_{\underline{\alpha}, \underline{\beta}}=\int_{Z_{\underline{\alpha}}}c(TZ_{\underline{\alpha}})\cdot \widetilde{Q}_{\rho(n)\backslash\underline{\beta}}(\underline{U}^\vee)\cap\left[Z_{\underline{\alpha}}\right].$$
\end{theorem}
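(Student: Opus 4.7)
The plan is to mirror the proof of Theorem \ref{Cst_D} (the type D analogue), since the only genuinely type-dependent ingredient is the Pfaffian presentation of Schubert classes in $LG(n,V)$ via Schur $\widetilde Q$-functions rather than $\widetilde P$-functions. First, I would set up Poincar\'e duality in $A_*(LG(n,V))$: there is a dual Schubert variety $\widetilde{\mathbb{S}}(\underline{\beta})$ to $\mathbb{S}(\underline{\beta})$, and by \cite[Theorem 2.1]{PR} (as stated after Lemma \ref{lem:class} in the type D case) its class is represented by $\widetilde{Q}_{\rho(n)\backslash\underline{\beta}}(\underline{U}^\vee)\in A_*(LG(n,V))$, normalised so that $\int_{LG(n,V)}[\mathbb{S}(\underline{\beta})]\cdot[\widetilde{\mathbb{S}}(\underline{\beta})]=1$. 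Here $\rho(n)=(n,n-1,\dots,1)$ (one more part than in type D, because the strict partitions indexing Schubert classes in $LG(n,V)$ are contained in $\rho(n)$ rather than $\rho(n-1)$). Then by the definition of $\gamma_{\underline{\alpha},\underline{\beta}}$ as the coefficient of $[\mathbb{S}(\underline{\beta})]$ in the Schubert expansion of $\pi_*c_{SM}(Z_{\underline{\alpha}})$,
\[
\gamma_{\underline{\alpha},\underline{\beta}}=\int_{LG(n,V)}\pi_*c_{SM}(Z_{\underline{\alpha}})\cdot [\widetilde{\mathbb{S}}(\underline{\beta})].
\]

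Next, I would use the smoothness of $Z_{\underline{\alpha}}$ established in Theorem \ref{thm:small}(1): this identifies $c_{SM}(Z_{\underline{\alpha}})$ with $c(TZ_{\underline{\alpha}})\cap[Z_{\underline{\alpha}}]$ via the MacPherson normalization \eqref{eqn:sm}. Substituting the class $[\widetilde{\mathbb{S}}(\underline{\beta})]=\widetilde{Q}_{\rho(n)\backslash\underline{\beta}}(\underline{U}^\vee)$ and applying the projection formula \cite[Proposition 2.5(c)]{Fulton} to the proper morphism $\pi:Z_{\underline{\alpha}}\to LG(n,V)$, we can pull $\widetilde{Q}_{\rho(n)\backslash\underline{\beta}}(\underline{U}^\vee)$ back along $\pi$ (suppressing pullback notation, since the universal subbundle on the target restricts to the chosen $\underline{U}$ on $Z_{\underline{\alpha}}$), yielding
\[
\gamma_{\underline{\alpha},\underline{\beta}}=\int_{Z_{\underline{\alpha}}}c(TZ_{\underline{\alpha}})\cdot\widetilde{Q}_{\rho(n)\backslash\underline{\beta}}(\underline{U}^\vee)\cap[Z_{\underline{\alpha}}].
\]
This is precisely the stated formula.

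There is essentially no obstacle beyond checking bookkeeping; the argument is a verbatim translation of Theorem \ref{Cst_D}. The only point that merits care is ensuring that the complementation convention $\rho(n)\backslash\underline{\beta}$ is the correct one for the Lagrangian Grassmannian, and in particular that the strict partitions indexing Schubert classes in $LG(n,V)$ sit inside $\rho(n)=(n,n-1,\dots,1)$ (not inside $\rho(n-1)$ as in type D), so that the duality $\int [\mathbb{S}(\underline{\beta})]\cdot\widetilde{Q}_{\rho(n)\backslash\underline{\beta}}(\underline{U}^\vee)=1$ holds; this is precisely the $\widetilde{Q}$-analogue of \cite[Theorem 2.1]{PR} and is the only place where type C differs from type D. Once this duality is in hand, the projection formula and smoothness arguments go through without change.
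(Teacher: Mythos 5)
Your proposal is correct and follows essentially the same route as the paper's proof: the duality pairing $\int_{LG(n,V)}[\mathbb{S}(\underline{\beta})]\cdot[\widetilde{\mathbb{S}}(\underline{\beta})]=1$ with $[\widetilde{\mathbb{S}}(\underline{\beta})]=\widetilde{Q}_{\rho(n)\backslash\underline{\beta}}(\underline{U}^\vee)$, the identification $c_{SM}(Z_{\underline{\alpha}})=c(TZ_{\underline{\alpha}})\cap[Z_{\underline{\alpha}}]$ from smoothness, and the projection formula. Your remark on checking the $\rho(n)$ versus $\rho(n-1)$ convention is exactly the one type-dependent point, and it is handled the same way in the paper.
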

\begin{proof}
The overall argument of the proof basically resembles to the one in Theorem \ref{Cst_D}.
We denote by $\widetilde{\mathbb{S}}({\underline{\beta}})$ the dual Schubert variety to $\mathbb{S}(\underline{\beta})$ so that  $\int_{LG(n,V)}\left[\mathbb{S}(\underline{\beta})\right]\cdot \left[\widetilde{\mathbb{S}}({\underline{\beta}})\right]=1$ is satisfied. By the duality, the constant $\gamma_{\underline{\alpha},\underline{\beta}}$ is given by the integration
\begin{align*}
\gamma_{\underline{\alpha}, \underline{\beta}}&=\int_{LG(n,V)}\pi_*c_{SM}(Z_{\underline{\alpha}})\cdot \left[\widetilde{\mathbb{S}}({\underline{\beta}})\right]\labelrel={eqn;sm a}\int_{LG(n,V)}\pi_*(c(TZ_{\underline{\alpha}})\cap \left[Z_{\underline{\alpha}}\right])\cdot \left[\widetilde{\mathbb{S}}({\underline{\beta}})\right],
\end{align*}
where $c_{SM}(Z_{\underline{\alpha}})=c(TZ_{\underline{\alpha}})\cap \left[Z_{\underline{\alpha}}\right]$ is applied for the equality ~\eqref{eqn;sm a}. 
The use of the fact $$\left[\widetilde{\mathbb{S}}({\underline{\beta}})\right]=\widetilde{Q}_{\rho(n)\backslash\underline{\beta}}(\underline{U}^\vee)\in A_*(LG(n,V))$$ and the projection formula establishes
\begin{align*}
\gamma_{\underline{\alpha}, \underline{\beta}}&=\int_{LG(n,V)}\pi_*\left((c(TZ_{\underline{\alpha}})\cap \left[Z_{\underline{\alpha}}\right])\cdot \pi^*\left[\widetilde{\mathbb{S}}({\underline{\beta}})\right]\right)=\int_{Z_{\underline{\alpha}}}c(TZ_{\underline{\alpha}})\cdot \widetilde{Q}_{\rho(n)\backslash\underline{\beta}}(\underline{U}^\vee)\cap \left[Z_{\underline{\alpha}}\right],
\end{align*}
suppressing the pullback notation for vector bundles. 
\end{proof}

In general the KL-classes can be written as a linear combination of Chern-Mather classes explicitly if we know the Euler obstruction corresponding to each pair of $\underline{\alpha}$ and $\underline{\beta}$. The formula for the (torus equivariant) Mather class of cominuscule Schubert varieties of type C (with the other types) is given by \cite{MS}.

The positivity of the constant $\gamma_{\underline{\alpha},\underline{\beta}}$ can be addressed as follows.
\begin{proposition}\label{prop:pos}
Let $V$ be a $2n$-dimensional vector space. Let $\mathbb{S}(\underline{\alpha})$ be a Schubert variety in Lagrangian Grassmannian $LG(n,V)$. In the Schubert expansion of the KL class
\[
KL(\mathbb{S}(\underline{\alpha}))=\sum_{\underline{\beta}}\gamma_{\underline{\alpha},\underline{\beta}}\left[\mathbb{S}(\underline{\beta})\right],
\]
the coefficient $\gamma_{\underline{\alpha},\underline{\beta}}$ is positive.
\end{proposition}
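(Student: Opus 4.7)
The plan is to combine two positivity results that are already in the literature and have been cited in the paper. First, from Theorem \ref{thm:KL} and the definition \eqref{eq:KLspan}, the Kazhdan-Lusztig class expands as
\[
KL(\mathbb{S}(\underline{\alpha}))=\sum_{\underline{\beta}\leq\underline{\alpha}}P_{\underline{\alpha},\underline{\beta}}(1)\,c_{SM}(\mathbb{S}(\underline{\beta})^\circ),
\]
so the coefficients $\gamma_{\underline{\alpha},\underline{\beta}}$ in the Schubert expansion are obtained by a double sum
\[
\gamma_{\underline{\alpha},\underline{\beta}}=\sum_{\underline{\beta}\leq\underline{\delta}\leq\underline{\alpha}}P_{\underline{\alpha},\underline{\delta}}(1)\cdot a_{\underline{\delta},\underline{\beta}},
\]
where $a_{\underline{\delta},\underline{\beta}}$ is the coefficient of $[\mathbb{S}(\underline{\beta})]$ in $c_{SM}(\mathbb{S}(\underline{\delta})^\circ)$. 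It therefore suffices to show that each factor is nonnegative, and that at least one term in the sum (for instance the diagonal term $\underline{\delta}=\underline{\beta}$ or $\underline{\delta}=\underline{\alpha}$) is strictly positive.

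For the Kazhdan-Lusztig factor, I would invoke the theorem of Kazhdan-Lusztig that $P_{\underline{\alpha},\underline{\delta}}(q)\in\mathbb{Z}_{\geq 0}[q]$ with constant term equal to $1$ whenever $\underline{\delta}\leq\underline{\alpha}$; equivalently, by Proposition \ref{Euler} together with \cite[Theorem 12.2.5]{HTT} as used in the proof of Theorem \ref{thm:KL}, $P_{\underline{\alpha},\underline{\delta}}(1)=\chi(\pi^{-1}(p_{\underline{\delta}}))\geq 1$ since the fiber is a nonempty projective variety whose odd cohomology vanishes. For the CSM factor, I would appeal to the positivity of Chern-Schwartz-MacPherson classes of Schubert cells in cominuscule homogeneous spaces: the Lagrangian Grassmannian $LG(n,V)$ is cominuscule, and the results of Aluffi-Mihalcea-Schürmann-Su \cite{AMSS,AMSS20} (and \cite{MS} for the type C case) give $a_{\underline{\delta},\underline{\beta}}\geq 0$ for all $\underline{\beta}\leq\underline{\delta}$, with $a_{\underline{\delta},\underline{\delta}}=1$ because the top-dimensional term of $c_{SM}(\mathbb{S}(\underline{\delta})^\circ)$ is the fundamental class $[\mathbb{S}(\underline{\delta})]$.

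Putting the two pieces together, every summand in the expression for $\gamma_{\underline{\alpha},\underline{\beta}}$ is a nonnegative integer, so $\gamma_{\underline{\alpha},\underline{\beta}}\geq 0$. For strict positivity when $\underline{\beta}\leq\underline{\alpha}$, take the term $\underline{\delta}=\underline{\beta}$: then $P_{\underline{\alpha},\underline{\beta}}(1)\geq 1$ and $a_{\underline{\beta},\underline{\beta}}=1$, contributing at least $1$ to the sum. Hence $\gamma_{\underline{\alpha},\underline{\beta}}>0$, as claimed.

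The main obstacle is correctly attributing and quoting the CSM positivity statement in type C, since positivity of CSM classes of Schubert cells was first established in type A by Aluffi-Mihalcea and only later extended to the cominuscule (and more generally the $G/P$) setting. Fortunately the needed input is exactly what is recorded in \cite{AMSS20,MS}, which are already in our bibliography; no new geometric input is required.
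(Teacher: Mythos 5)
Your argument is correct and is essentially the paper's own proof: both rest on the expansion \eqref{eq:KLspan}, the nonnegativity of Kazhdan--Lusztig polynomials with constant term $1$ (so $P_{\underline{\alpha},\underline{\beta}}(1)\geq 1$), and the nonnegativity of CSM classes of Schubert cells from \cite{AMSS}. You merely make explicit the step the paper leaves implicit, namely that the diagonal term $\underline{\delta}=\underline{\beta}$ contributes $P_{\underline{\alpha},\underline{\beta}}(1)\cdot 1\geq 1$ because the leading term of $c_{SM}(\mathbb{S}(\underline{\beta})^\circ)$ is the fundamental class, which is a worthwhile clarification but not a different route.
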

\begin{proof}
The proof of the statement is straightforward by the reasoning in \cite[Proposition 10.3]{MS}. That is, the Kazhdan-Lusztig polynomials associated to $\underline{\alpha}, \underline{\beta}$ with $\underline{\alpha}\geq \underline{\beta}$ are nonnegative and its constant term equals $1$. Hence we have the proposition by the equation \eqref{eq:KLspan} and the fact that CSM classes of Schubert cells in a homogeneous space $G/P$ are nonnegative \cite{H,AMSS} for a (complex) simple Lie group $G$  (in particular for $GL(n), Sp(n), SO(2n+1)$ and $SO(2n)$) and any parabolic subgroup $P\subset G$.
\end{proof}
\begin{remark}\label{rmk}
This proposition is independently proved by Aluffi, Mihalcea, Schuermann and Su \cite{AMSS} according to a private communication with one of the authors, Mihalcea. This statement will be included in their paper. In particular, the proof of Prop. \ref{prop:pos} has shown that $\gamma_{\underline{\alpha},\underline{\beta}}$ is positive regardless of types. In other words, the positivity property of $\gamma_{\underline{\alpha},\underline{\beta}}$ works for Schubert varieties in $G/P$ of any classical types. 

Here are another interesting observations related to the coefficients $\gamma_{\underline{\alpha},\underline{\beta}}$. In case of Mather classes, the Mather polynomial of $\mathbb{S}(\underline{\alpha})$ is a polynomial in $x^{\ell_{\underline{\beta}}}$ corresponding to $\left[\mathbb{S}(\underline{\beta})\right]$ where $\ell_{\underline{\beta}}$ is the dimension of $\mathbb{S}(\underline{\beta})$, and it is known by conjectures in \cite[Intro.]{MS} that the Mather polynomial is unimodal whose terminology is defined in \cite{Sta89}. In Example \ref{ex:D} for (type D) even orthogonal Grassmannian cases, the Mather polynomial $M_{\underline{\alpha}_0}(x)$ of $\mathbb{S}(\underline{\alpha}_0)$ is given by
\[
M_{\underline{\alpha}_0}(x)=x^8+12x^7+51x^6+148x^5+244x^4+348x^3+204x^2+84x+24
\]
which strengthens their unimodality conjectures in \cite[\S 10.4]{MS}.
\end{remark}

We shift gears to compute an example for the coefficient $\gamma_{\underline{\alpha},\underline{\beta}}$ of the Schubert expansion.

\begin{example}
Let $\underline{\alpha}=(2),\underline{\beta}=(1)$.
Let $V$ be a $6$-dimensional vector space over $\mathbb{C}$ with the ordered basis 
$$e_1<e_2<e_3<e_{\bar{3}}<e_{\bar{2}}<e_{\bar{1}}.$$ 
We fix a complete isotropic flag 
$$V_\bullet=(0\subset V_1\subset V_2\subset V_3\subset V_2^\perp\subset V_1^\perp\subset V)$$
where $V_i=\langle e_1,\ldots,e_i \rangle$ for $1\leq i\leq 3$ and consider the variety $\mathbb{S}{(\underline{\alpha})}=\{L\;|\;\mathrm{dim}(L\cap V_2)\geq 1\}\subset LG(3,V)$ of dimension $4$ with the IH-small resolution of singularity $Z_{\underline{\alpha}}\rightarrow \mathbb{S}{(\underline{\alpha})}$ by Sankaran and Vanchinathan. In light of the fixed partial flag $V_\bullet:0\subset V_2\subset V$, we have the locus $Z_{\underline{\alpha}}$ as
$$Z_{\underline{\alpha}}=\{(U_1,U_2)\;|\;0\subset U_1\subset V_2,\;\;\mathrm{dim}(U_2\cap U_1)\geq 1\}\subset IG(1,V)\times LG(3,V).$$ 
By virtue of Theorem \ref{thm:cst_C} we get
\begin{align*}
c(TZ_{\underline{\alpha}})&\cong c(\underline{U}_1^\vee\otimes \underline{V}_2/\underline{U}_1)\cdot c(\mathrm{Sym}^2(\underline{U}_2/\underline{U}_1)^\vee).
\end{align*}
Moreover we know that $\left[\mathbb{S}(\underline{\beta})\right]=\widetilde{Q}_{(3)}(U_2^\vee)$ so that the dual Schubert class of $\widetilde{\mathbb{S}}(\underline{\beta})$ is given by 
\begin{align*}
\left[\widetilde{\mathbb{S}}(\underline{\beta})\right]=\widetilde{Q}_{(2,1)}(U_2^\vee),
\end{align*}
as $\rho(3)\backslash (3)=(2,1)$. 
Then the constant $\gamma_{\underline{\alpha},\underline{\beta}}$ is obtained by the integration
$$\gamma_{\underline{\alpha},\underline{\beta}}=\int_{Z_{\underline{\alpha}}} ( c_1(\underline{U}_1^\vee\otimes \mathbb{C}^2/\underline{U}_1)+c_1(\mathrm{Sym}^2(\underline{U}_2/\underline{U}_1)^\vee))\cdot c_{2,1}(\underline{U}_2^\vee)\cap\left[Z_{\underline{\alpha}}\right].$$

We evaluate the integral, using the Bott Residue Formula with $8$ $T$-fixed points 
$$p_\bullet=(\langle e_{i_1}\rangle, \langle e_{i_1},e_{i_2},e_{i_3}\rangle)$$
of $Z^T$, by the choses of $i_1\in\{1,2\}$ and $i_2, i_3\in\{1,2,3,\bar{3},\bar{2},\bar{1}\}\backslash\{i_1, \bar{i_1}\}$ such that $e_{i_2}< e_{i_3}$.
%
%
%
Suppose that the weights of $\mathbb{C}^*$-action on $V$ are $\left[w_1,w_2,w_3,-w_3,-w_2,-w_1\right]=\left[1,2,3,-3,-2,-1\right].$ Let us fix a $T$-fixed point $p_\bullet=(\langle e_{i_1}\rangle,\langle e_{i_1}, e_{i_2}, e_{i_3}\rangle)$. Then the relevant $\mathbb{C}^*$-equivariant Chern classes of the bundles at a point are
$$
c_1^{\mathbb{C}^*}(\underline{U}_1^\vee\otimes \underline{V}_2/\underline{U}_1)=(-2w_{i_1})t,\quad c_1^{\mathbb{C}^*}(\mathrm{Sym}^2(\underline{U}_2/\underline{U}_1)^\vee))=(-3w_{i_2}-3w_{i_3})t,$$
$$c_{2,1}^{\mathbb{C}^*}(\underline{U}_2^\vee)=(-w_{i_1}^2w_{i_2}-w_{i_1}^2w_{i_3}-w_{i_1}w_{i_2}^2-w_{i_1}w_{i_2}w_{i_3}-w_{i_1}w_{i_3}^2-w_{i_2}^2w_{i_3}-w_{i_2}w_{i_3}^2)t^3,$$
$$c_4^{\mathbb{C}^*}(TZ_{\underline{\alpha}})=(8w_{i_1}w_{i_2}w_{i_3}(w_{i_2}+w_{i_3}))t^4.$$
Therefore the rational number as a term for the $T$-fixed point $p_\bullet$ of the Bott Residue Formula applied is 
\begin{equation*}\label{p1}
\dfrac{(-2w_{i_1}-3w_{i_2}-3w_{i_3})\cdot (-w_{i_1}^2w_{i_2}-w_{i_1}^2w_{i_3}-w_{i_1}w_{i_2}^2-w_{i_1}w_{i_2}w_{i_3}-w_{i_1}w_{i_3}^2-w_{i_2}^2w_{i_3}-w_{i_2}w_{i_3}^2)}{8w_{i_1}w_{i_2}w_{i_3}(w_{i_2}+w_{i_3})}.
\end{equation*}
We add all over these $8$ $T$-fixed points with the weights to have the value $$\gamma_{\underline{\alpha},\underline{\beta}}=5.$$

\end{example}

 The Kazhdan-Lusztig class of the Schubert variety $\mathbb{S}(\underline{\alpha})$ admitting the IH-small resolution for $n=3$ with respect to the (homology) class of Schubert varieties $\mathbb{S}(\underline{\beta})\subset \mathbb{S}(\underline{\alpha})$ is displayed in Table \ref{TableKL} whose left most column represents $\underline{\alpha}$ associated to $\mathbb{S}(\underline{\alpha})$ and the corresponding row indicates $\underline{\beta}$ such that the coefficients $\gamma_{\underline{\alpha},\underline{\beta}}$ of the Schubert class $[\mathbb{S}(\underline{\beta})]$ in the Kazhdan-Lusztig class $KL(\mathbb{S}(\underline{\alpha}))$ are listed. The Schubert expansion of the Kazhdan-Lusztig class for $\mathbb{S}(\underline{\alpha})$ is recovered by summing up the rows corresponding to the Schubert varieties $\mathbb{S}(\underline{\beta})\subset\mathbb{S}(\underline{\alpha})$. Here $(2)=\alpha_0,(2,3)=\beta_0,(1)=\alpha_1, (1,2,3)=\gamma_0, (1,3)=\beta_1,  (1,2)=\beta_2.$
\begin{table}[h]
\caption{}
\centering
\def\arraystretch{1.5}
$\begin{array}{ccccccccccccc}
\hline
 	    &  \alpha_0&\beta_0 &\alpha_1& \beta_1&\beta_2&\gamma_0  \\ \hline
\alpha_0 & 1 & 3&5  &14 &20&8 \\ 
\alpha_1 & \cdot & \cdot&1  & 3&8&4 \\ \hline
\end{array}$
\label{TableKL}
\end{table}


We notice that in $n=4$ the local Euler obstruction $\mathrm{Eu}_{\mathbb{S}(\underline{\alpha})}(p_{\underline{\beta}})$ of a Schubert variety $\mathbb{S}(\underline{\alpha})$ at $T$-fixed points $p_{\underline{\beta}}\in \mathbb{S}(\underline{\alpha})$ is exactly the same as the value of the Kazhdan-Lusztig polynomials $P_{\underline{\alpha},\underline{\beta}}(1)$ evaluated at $q=1$ by \cite[Table. 3]{MS} and \eqref{eqn:KL(1)}. In other words the Kazhdan-Lusztig class of  the Schubert variety $\mathbb{S}(\underline{\alpha})$ for $\underline{\alpha}=(1,2,4)$ (equivalently, the Young diagram ${\tiny \Tableau{
&&&\\
~&&&\\
~&~&\\
}}$ ) is equal to the Chern-Mather class of $\mathbb{S}(\underline{\alpha})$ presented in \cite[Example 6.4]{MS}, as

$$\begin{array}{cccccccccccccccc}
\hline
 	    &  \alpha_0&\beta_0 &\alpha_1& \beta_1&\alpha_2&\beta_2&\beta_3&\beta_4&\gamma_0&\beta_5&\gamma_1&\gamma_2&\gamma_3&\gamma_4  \\ \hline
\alpha_0 & 1 & 4&7  &27 &25&60&92&241&45&269&183&246&132&24 \\  \hline
\end{array}$$
where
\begin{equation}\label{A_Label}
 \begin{split}
 (3)=\alpha_0, \quad& (3,4)=\beta_0, \quad(1,4)=\beta_3, \quad(2,3,4)=\gamma_0,\quad(1,2,3)=\gamma_3,\\
 (2)=\alpha_1, \quad &(2,4)=\beta_1, \quad(1,3)=\beta_4,   \quad(1,3,4)=\gamma_1, \quad(1,2,3,4)=\gamma_4, \\
 (1)=\alpha_2,  \quad& (2,3)=\beta_2, \quad(1,2)=\beta_5, \quad(1,2,4)=\gamma_2.\\
 \end{split}
 \end{equation}
 %

\subsection{Type B}\label{s5.2}


We take a $2n+1$-dimensional vector space $V$ over $\mathbb{C}$ together with a non-degenerate quadratic form on it, and fix an isotropic partial flag 
$$V_{\alpha_1}\subset \cdots\subset V_{\alpha_s}\subseteq V$$
in $Fl^B(\underline{\alpha};V)$ where the rank of the subspaces is $\alpha_i$ of type C.
  We define the Schubert variety by
 $$\mathbb{S}(\underline{\alpha})=\{L\;|\;\mathrm{dim}(L\cap V_{\alpha_i})\geq i\;\text{for}\; 1\leq i\leq s\}$$
 in the odd orthogonal Grassmannian $OG(n,V)=SO(2n+1)/P$ of dimension $n$ isotropic subspaces of $V$.
 
 Along with the isomorphism $$\eta:OG(n,\mathbb{C}^{2n+1})\rightarrow OG'(n+1,\mathbb{C}^{2n+2})\;\text{ (resp.} \;OG''(n+1,\mathbb{C}^{2n+2}))$$
in \cite[Section 3.5]{IMN}, there is a Schubert variety $\mathbb{S}'(\underline{\alpha})$ such that the inverse image of $\mathbb{S}'(\underline{\alpha})$ under $\eta$ is the Schubert variety
$\eta^{-1}(\mathbb{S}'(\underline{\alpha}))=\mathbb{S}(\underline{\alpha})$, and the IH-small resolution $Z'_{\underline{\alpha}}$ for $\mathbb{S}'(\underline{\alpha})$ in the even orthogonal Grassmannian $OG'(n+1,\mathbb{C}^{2n+2})$ for some $\underline{\alpha}'$ of type D can be pulled back to the IH-small resolution $Z_{\underline{\alpha}}$ for the Schubert variety $\mathbb{S}(\underline{\alpha})$ in $OG(n,\mathbb{C}^{2n+1})$ by the diagram
\begin{equation}\label{comm}
\begin{tikzcd}[row sep=scriptsize, column sep=scriptsize]
Z_{\underline{\alpha}} \arrow[r, hook, crossing over]\arrow[dd,"\pi"]  & Z_{\underline{\alpha}}' \\
&  & &\\
\mathbb{S}(\underline{\alpha}) \arrow[r, hook,"\eta"]  & \mathbb{S}'(\underline{\alpha})\arrow[from=uu, crossing over,"pr_1"].
\end{tikzcd}
\end{equation}

Let $X^B:=\prod_{j=1}^{d+1} OG(k_j,V)$ for some $k_j, 1\leq j\leq d$ and $k_{d+1}=n$ so that $X^B$ contains the locus $Z_{\underline{\alpha}}$ defined by $W_j^L\subset U_j\subset W_j^R$, $\mathrm{dim}(U_j)=k_j$ for all $j$. Theorem \ref{CTZ_B} shows the Chern class of the tangent bundle $TZ_{\underline{\alpha}}$ as regards universal bundles on $X^B$.

\begin{theorem}\label{CTZ_B}
For a IH-small resolution $Z_{\underline{\alpha}}$ for a Schubert variety $\mathbb{S}(\underline{\alpha})$ associated to $\underline{\alpha}\in W^P$ in $OG(n,V)$, the Chern class of the tangent bundle $TZ_{\underline{\alpha}}$ on $Z_{\underline{\alpha}}$ is 
$$c(TZ_{\underline{\alpha}})=\prod_{i=1}^{d}c((\underline{U}_i/\underline{W}_i^L)^\vee\otimes(\underline{W}_i^R/\underline{U}_i))c((\underline{U}_{d+1}/\underline{W}_{d+1}^L)^\vee\otimes(\underline{U}_{d+1}^\perp/\underline{U}_{d+1}) )c\left(\mathrm{\wedge}^2(\underline{U}_{d+1}/\underline{W}_{d+1}^L)^\vee\right)$$
with respect to the universal bundles on $X^B$.
\end{theorem}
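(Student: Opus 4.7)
The plan is to mirror the strategy of Theorem \ref{CTZ}, breaking $Z_{\underline{\alpha}}$ into an iterated fiber bundle $Z^{(1)}\to\operatorname{pt}$, $Z^{(j)}\to Z^{(j-1)}$ for $2\le j\le d+1$, where the intermediate stages are ordinary Grassmannian bundles and the last stage is an odd orthogonal Grassmannian bundle. First I would prove the odd analog of Proposition \ref{D:prop}: for $2\le j\le d$ the constraint $q_j+a_{j+1}<n$ forces all $\underline{W}_j^R$ to sit inside a trivial (rank $n$) isotropic subbundle, so the fiber over $(U_1,\ldots,U_{j-1})$ is the ordinary Grassmannian $Gr(l_j,W_j^R/W_j^L)$; at the final stage the fiber is forced to be isotropic by $U_{d+1}\subset U_{d+1}^\perp$ and is therefore the odd orthogonal Grassmannian $OG(l_{d+1},(W_{d+1}^L)^\perp/W_{d+1}^L)$ inside a space of odd rank $2n+1-2\dim W_{d+1}^L$.

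Next, for the ordinary Grassmannian bundles, applying \eqref{D:eq} gives
\[
c(T_{Z^{(j)}/Z^{(j-1)}})=c\bigl((\underline{U}_j/\underline{W}_j^L)^\vee\otimes(\underline{W}_j^R/\underline{U}_j)\bigr),\qquad 1\le j\le d.
\]
Inductively using the short exact sequence $0\to \pi^{-1}TZ^{(j-1)}\to TZ^{(j)}\to T_{Z^{(j)}/Z^{(j-1)}}\to 0$ and multiplicativity of total Chern classes, this yields
\[
c(TZ^{(d)})=\prod_{i=1}^{d}c\bigl((\underline{U}_i/\underline{W}_i^L)^\vee\otimes(\underline{W}_i^R/\underline{U}_i)\bigr),
\]
exactly as in types A and D.

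For the final stage, the key point is that Lemma \ref{D:lem} holds for an orthogonal Grassmannian bundle $OG(k,\underline{\mathscr{E}})$ regardless of the parity of $\operatorname{rk}\underline{\mathscr{E}}$: the proof only uses the symmetric form to define $\psi:\underline{\mathscr{E}}/\underline{\mathscr{S}}\to\underline{\mathscr{S}}^\vee$ and the identification $T_{OG}=\phi^{-1}(\wedge^2\underline{\mathscr{S}}^\vee)$, which is valid in the odd case as well. Applying this to $\underline{\mathscr{E}}=(\underline{W}_{d+1}^L)^\perp/\underline{W}_{d+1}^L$ with isotropic subbundle $\underline{\mathscr{S}}=\underline{U}_{d+1}/\underline{W}_{d+1}^L$ produces the canonical splitting
\[
T_{Z^{(d+1)}/Z^{(d)}}\;\cong\;\bigl((\underline{U}_{d+1}/\underline{W}_{d+1}^L)^\vee\otimes\underline{\mathscr{S}}^\perp/\underline{\mathscr{S}}\bigr)\oplus\wedge^2(\underline{U}_{d+1}/\underline{W}_{d+1}^L)^\vee.
\]
Here the step I expect to require the most care is the identification of $\underline{\mathscr{S}}^\perp/\underline{\mathscr{S}}$ with the bundle $\underline{U}_{d+1}^\perp/\underline{U}_{d+1}$ appearing in the statement. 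Taking the orthogonal complement inside $(\underline{W}_{d+1}^L)^\perp/\underline{W}_{d+1}^L$ and then comparing with the orthogonal inside the ambient $\underline{V}$ gives $(\underline{U}_{d+1}/\underline{W}_{d+1}^L)^\perp=\underline{U}_{d+1}^\perp/\underline{W}_{d+1}^L$, so the quotient by $\underline{U}_{d+1}/\underline{W}_{d+1}^L$ is indeed $\underline{U}_{d+1}^\perp/\underline{U}_{d+1}$. Unlike type D, where this quotient vanishes because $U_{d+1}$ is already maximal isotropic in an even-dimensional space, in type B it is a line bundle, so the first summand genuinely contributes.

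Finally, combining the computations via the short exact sequence
\[
0\to (Z^{(d+1)}\!\to\! Z^{(d)})^{-1}TZ^{(d)}\to TZ^{(d+1)}\to T_{Z^{(d+1)}/Z^{(d)}}\to 0
\]
and identifying $Z^{(d+1)}=Z_{\underline{\alpha}}$, multiplicativity of total Chern classes yields the claimed formula. As a sanity check one can pass through the isomorphism $\eta$ in \eqref{comm}: pulling back the type D formula of Theorem \ref{CTZ} along $\eta$ should produce the same expression, with the new factor $c((\underline{U}_{d+1}/\underline{W}_{d+1}^L)^\vee\otimes(\underline{U}_{d+1}^\perp/\underline{U}_{d+1}))$ accounting for the rank difference between the type B and type D maximal isotropic Grassmannians.
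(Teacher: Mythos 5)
Your proposal is correct and follows essentially the same route as the paper: reduce to the iterated Grassmannian-bundle structure, reuse the type D computation for the first $d$ stages, and observe that at the last stage the odd-rank version of Lemma \ref{D:lem} produces the extra factor because $(\underline{U}_{d+1}/\underline{W}_{d+1}^L)^\perp/(\underline{U}_{d+1}/\underline{W}_{d+1}^L)\cong\underline{U}_{d+1}^\perp/\underline{U}_{d+1}$ is now a line bundle rather than zero. The only detail the paper adds that you omit is the identification of this line bundle with $\wedge^{2n+1}\underline{V}$ (hence trivial first Chern class), which is used later for computations but is not needed for the formula itself.
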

\begin{proof}
The proof is almost identical with Theorem \ref{CTZ}, which boils down to check the difference at the canonical isomorphism for the relative tangent bundle
\begin{align*}
T_{OG(l_{d+1},(\underline{W}_{d+1}^L)^\perp/\underline{W}_{d+1}^R)/Z^{(d)}}\cong& ((\underline{U}_{d+1}/\underline{W}_{d+1}^L)^\vee\otimes((\underline{U}_{d+1}/\underline{W}_{d+1}^L)^\perp/\underline{U}_{d+1}/\underline{W}_{d+1}^L)\\
&\oplus \wedge^2(\underline{U}_{d+1}/\underline{W}_{d+1}^L)^\vee.
\end{align*}
We know from $\mathrm{rk}(\underline{U}_{d+1})=n$ that $(\underline{U}_{d+1}/\underline{W}_{d+1}^L)^\perp/(\underline{U}_{d+1}/\underline{W}_{d+1}^L)\cong \underline{U}_{d+1}^\perp/\underline{U}_{d+1}$ is the line bundle that is equivalently isomorphic to $\wedge^{2n+1}\underline{V}$. It follows that the first equivariant Chern class $c_1^T(\underline{U}_{d+1}^\perp/\underline{U}_{d+1})$ of the line vanishes as $0$ \cite[Page. 75]{FP}. Thus, using the sequence vector bundles $Z^{(j)}\rightarrow Z^{(j-1)}$ where $Z^{(j)}=\{(U_1,\ldots, U_j)\;|\; W_i^L\subset U_i\subset W_i^R\;\text{for}\;1\leq i\leq j\}$, the Chern class of the $d$-th relative tangent bundle $T_{OG(l_{d+1},(\underline{W}_{d+1}^L)^\perp/\underline{W}_{d+1}^R)/Z^{(d)}}$ is expressed by

$$c(T_{OG(l_{d+1},(\underline{W}_{d+1}^L)^\perp/\underline{W}_{d+1}^R)/Z^{(d)}})\cong c((\underline{U}_{d+1}/\underline{W}_{d+1}^L)^\vee\otimes(\underline{U}_{d+1}^\perp/\underline{U}_{d+1}) )c(\wedge^2(\underline{U}_{d+1}/\underline{W}_{d+1}^L)^\vee)$$
and the rest by $c(TZ^{(d)})=\prod_{i=1}^dc((\underline{U}_i/\underline{W}_i^L)^\vee\otimes(\underline{W}_i^R/\underline{U}_i)).$
Putting all together, we complete the proof.
\end{proof}

Since the constant $\gamma_{\underline{\alpha},\underline{\beta}}$ is obtained as in type D, we state Theorem \ref{lem5} without the proof.

\begin{theorem}\label{lem5}
Let $\underline{U}$ be the pullback of the universal tautological subbundle on $OG(n,V)$. Then the coefficient $\gamma_{\underline{\alpha}, \underline{\beta}}$ of the Schubert class $\left[\mathbb{S}(\underline{\beta})\right]$ in $\pi_*c_{SM}(Z_{\underline{\alpha}})$ is given by the integration
$$\gamma_{\underline{\alpha}, \underline{\beta}}=\int_{Z_{\underline{\alpha}}}c(TZ_{\underline{\alpha}})\cdot \widetilde{P}_{\rho(n)\backslash\underline{\beta}}(\underline{U}^\vee)\cap\left[Z_{\underline{\alpha}}\right].$$
\end{theorem}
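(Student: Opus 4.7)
The plan is to follow the same template established in Theorem \ref{Cst_D} (type D) and Theorem \ref{thm:cst_C} (type C), since the proof in type B is essentially parallel once the appropriate Pfaffian formula for Schubert classes in $OG(n,\mathbb{C}^{2n+1})$ is in hand. First I would recall that any Schubert class $[\mathbb{S}(\underline{\beta})]\in A_*(OG(n,V))$ admits a Poincar\'e-dual partner $[\widetilde{\mathbb{S}}(\underline{\beta})]$ so that $\int_{OG(n,V)}[\mathbb{S}(\underline{\beta})]\cdot [\widetilde{\mathbb{S}}(\underline{\beta})]=1$, which allows us to write
\[
\gamma_{\underline{\alpha},\underline{\beta}}=\int_{OG(n,V)}\pi_*c_{SM}(Z_{\underline{\alpha}})\cdot [\widetilde{\mathbb{S}}(\underline{\beta})].
\]
Since $Z_{\underline{\alpha}}$ is smooth (by the Sankaran--Vanchinathan construction transported through the diagram \eqref{comm}), the identity $c_{SM}(Z_{\underline{\alpha}})=c(TZ_{\underline{\alpha}})\cap [Z_{\underline{\alpha}}]$ holds by \eqref{eqn:sm}, whence
\[
\gamma_{\underline{\alpha},\underline{\beta}}=\int_{OG(n,V)}\pi_*\bigl(c(TZ_{\underline{\alpha}})\cap [Z_{\underline{\alpha}}]\bigr)\cdot [\widetilde{\mathbb{S}}(\underline{\beta})].
\]

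Next I would invoke the type B analogue of Lemma \ref{lem:class}, namely that $[\mathbb{S}(\underline{\beta})]=\widetilde{P}_{\underline{\beta}}(\underline{U}^\vee)$ in $A_*(OG(n,\mathbb{C}^{2n+1}))$, and correspondingly $[\widetilde{\mathbb{S}}(\underline{\beta})]=\widetilde{P}_{\rho(n)\backslash\underline{\beta}}(\underline{U}^\vee)$. The relevant index shift from $\rho(n-1)$ (type D) to $\rho(n)$ (type B) reflects the dimension of the ambient Grassmannian: $OG(n,\mathbb{C}^{2n+1})$ has dimension $\binom{n+1}{2}$, indexed by strict partitions inside $\rho(n)=(n,n-1,\ldots,1)$. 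This formula can either be quoted directly from \cite{PR} in the type B setting, or transferred via the isomorphism $\eta:OG(n,\mathbb{C}^{2n+1})\rightarrow OG'(n+1,\mathbb{C}^{2n+2})$ of \cite[\S 3.5]{IMN}, under which $\eta^*\underline{U}'\cong \underline{U}\oplus (\underline{U}^\perp/\underline{U})$, so that Schur $\widetilde{P}$-polynomials match up to the Chern class of the trivial line bundle $\underline{U}^\perp/\underline{U}$.

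Finally, substituting and applying the projection formula \cite[Proposition 2.5(c)]{Fulton} gives
\begin{align*}
\gamma_{\underline{\alpha},\underline{\beta}}
&=\int_{OG(n,V)}\pi_*\Bigl(\bigl(c(TZ_{\underline{\alpha}})\cap [Z_{\underline{\alpha}}]\bigr)\cdot \pi^*\widetilde{P}_{\rho(n)\backslash\underline{\beta}}(\underline{U}^\vee)\Bigr) \\
&=\int_{Z_{\underline{\alpha}}}c(TZ_{\underline{\alpha}})\cdot \widetilde{P}_{\rho(n)\backslash\underline{\beta}}(\underline{U}^\vee)\cap [Z_{\underline{\alpha}}],
\end{align*}
where we suppress the pullback notation as in the proof of Theorem \ref{Cst_D}. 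The main subtlety, and therefore the only real obstacle, is the justification of the dual Schubert class formula for $OG(n,\mathbb{C}^{2n+1})$ together with the correct shift to $\rho(n)\backslash\underline{\beta}$; once that is in place, the rest is a direct analogue of the type D and type C arguments.
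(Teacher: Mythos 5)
Your proposal is correct and matches the paper's intent exactly: the paper omits the proof of this theorem, remarking only that ``the constant $\gamma_{\underline{\alpha},\underline{\beta}}$ is obtained as in type D,'' and your argument is precisely the type D proof of Theorem \ref{Cst_D} transported to $OG(n,\mathbb{C}^{2n+1})$ via duality, smoothness of $Z_{\underline{\alpha}}$, the Pragacz--Ratajski $\widetilde{P}$-formula with the shift to $\rho(n)$, and the projection formula. You also correctly isolate the only type-B-specific point (the dual class $[\widetilde{\mathbb{S}}(\underline{\beta})]=\widetilde{P}_{\rho(n)\backslash\underline{\beta}}(\underline{U}^\vee)$, justified either directly from \cite{PR} or through the isomorphism $\eta$ with the trivial line bundle $\underline{U}^\perp/\underline{U}$ contributing nothing), so no gap remains.
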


We can carry out the computation for $\gamma_{\underline{\alpha},\underline{\beta}}$ as before either with Theorem \ref{CTZ_B} and Theorem \ref{lem5} or just by evaluating the weight for the basis $e_{\bar{n}}$ to be $0$.

The Chern-Mather class of Schubert varieties in the even orthogonal Grassmannian is closely related to the Kazhdan-Lusztig class in the odd orthogonal Grassmannian:
let $\mathbb{S}(\underline{\alpha})$ be a Schubert variety in $OG(n, \mathbb{C}^{2n+1})$ and $\mathbb{S}'(\underline{\alpha})$ a Schubert variety in $OG'(n+1,\mathbb{C}^{2n+2})$. The Kazhdan-Lusztig class $KL(\mathbb{S}(\underline{\alpha}))$ of $\mathbb{S}(\underline{\alpha})$ is equal to the Chern-Mather class $c_M(\mathbb{S}'(\underline{\alpha}))$ of the Schubert variety $\mathbb{S}'(\underline{\alpha})$, 
$$KL(\mathbb{S}(\underline{\alpha}))=c_M(\mathbb{S}'(\underline{\alpha}))$$
by the commutative diagram \eqref{comm} with Theorem \ref{thm:CM} and Theorem \ref{thm:KL}. 
Indeed, the Kazhdan-Lusztig class of $\mathbb{S}(\underline{\alpha})$ does agree with the Mather class of $\mathbb{S}(\underline{\alpha})$ in $OG(n,\mathbb{C}^{2n+1})$, because of the isomorphism of Schubert varieties between the types B and D Grassmannians.

%
%
%
%



\bibliographystyle{amsplain}

\begin{bibdiv}
\begin{biblist}

\bib{AMSS}{article}{
   author={Paolo Aluffi},
   author={Leonardo C. Mihalcea},
   author={Joerg Schuermann},
   author={ Changjian Su},
        title={Shadows of characteristic cycles, Verma modules, and positivity of Chern-Schwartz-MacPherson classes of Schubert cells}, 
       journal={preprint, arXiv:1709.08697},
    date={2017},
}

\bib{AMSS20}{article}{
   author={Paolo Aluffi},
   author={Leonardo C. Mihalcea},
   author={Joerg Schuermann},
   author={ Changjian Su},
        title={Positivity of Segre-MacPherson classes}, 
       journal={preprint, arXiv:1902.00762},
    date={2020},
}

\bib{B}{article}{
   author={Brion, M.},
   title={Equivariant Chow groups for torus actions},
   journal={Transform. Groups},
   volume={2},
   date={1997},
   number={3},
   pages={225--267},

}

\bib{BDK}{article}{
   author={Brylinski, Jean-Luc},
   author={Dubson, Alberto S.},
   author={Kashiwara, Masaki},
   title={Formule de l'indice pour modules holonomes et obstruction d'Euler
   locale},
   language={French, with English summary},
   journal={C. R. Acad. Sci. Paris S\'{e}r. I Math.},
   volume={293},
   date={1981},
   number={12},
   pages={573--576},
}

\bib{BF}{article}{
   author={Boe, Brian D.},
   author={Fu, Joseph H. G.},
   title={Characteristic cycles in Hermitian symmetric spaces},
   journal={Canad. J. Math.},
   volume={49},
   date={1997},
   number={3},
   pages={417--467},
}

\bib{BL}{book}{
   author={Billey, Sara},
   author={Lakshmibai, V.},
   title={Singular loci of Schubert varieties},
   series={Progress in Mathematics},
   volume={182},
   publisher={Birkh\"{a}user Boston, Inc., Boston, MA},
   date={2000},
   pages={xii+251},

}

\bib{BS}{article}{
   author={Brasselet, J.-P.},
   author={Schwartz, M.-H.},
   title={Sur les classes de Chern d'un ensemble analytique complexe},
   language={French},
   conference={
      title={The Euler-Poincar\'{e} characteristic (French)},
   },
   book={
      series={Ast\'{e}risque},
      volume={82},
      publisher={Soc. Math. France, Paris},
   },
   date={1981},
   pages={93--147},
}

\bib{CK}{article}{
   author={Carrell, James B.},
   author={Kuttler, Jochen},
   title={Smooth points of $T$-stable varieties in $G/B$ and the Peterson
   map},
   journal={Invent. Math.},
   volume={151},
   date={2003},
   number={2},
   pages={353--379},
}

\bib{C}{article}{
   author={Cortez, Aur\'{e}lie},
   title={Singularit\'{e}s g\'{e}n\'{e}riques et quasi-r\'{e}solutions des vari\'{e}t\'{e}s de
   Schubert pour le groupe lin\'{e}aire},
   language={French, with English summary},
   journal={Adv. Math.},
   volume={178},
   date={2003},
   number={2},
   pages={396--445},
}

\bib{D01}{article}{
   author={Dubson, Alberto S.},
   title={Formule pour l'indice des complexes constructibles et des Modules
   holonomes},
   language={French, with English summary},
   journal={C. R. Acad. Sci. Paris S\'{e}r. I Math.},
   volume={298},
   date={1984},
   number={6},
   pages={113--116},
}

\bib{D}{book}{
   author={Dimca, Alexandru},
   title={Sheaves in topology},
   series={Universitext},
   publisher={Springer-Verlag, Berlin},
   date={2004},
   pages={xvi+236},
}

\bib{EG}{article}{
   author={Edidin, Dan},
   author={Graham, William},
   title={Localization in equivariant intersection theory and the Bott
   residue formula},
   journal={Amer. J. Math.},
   volume={120},
   date={1998},
   number={3},
   pages={619--636},
}

\bib{Fulton92}{article}{
   author={Fulton, William},
   title={Flags, Schubert polynomials, degeneracy loci, and determinantal formulas},
   journal={Duke Math. J.},
   volume={65},
   date={1992},
   number={3},
   pages={381--420},
}
		
\bib{Fulton}{book}{
   author={Fulton, William},
   title={Intersection theory},
   series={Ergebnisse der Mathematik und ihrer Grenzgebiete. 3. Folge. A
   Series of Modern Surveys in Mathematics [Results in Mathematics and Related Areas. 3rd Series. A Series of Modern Surveys in Mathematics]},
   volume={2},
   edition={2},
   publisher={Springer-Verlag, Berlin},
   date={1998},
   pages={xiv+470},
}

\bib{FP}{book}{
   author={Fulton, William},
   author={Pragacz, Piotr},
   title={Schubert varieties and degeneracy loci},
   series={Lecture Notes in Mathematics},
   volume={1689},
   note={Appendix J by the authors in collaboration with I. Ciocan-Fontanine},
   publisher={Springer-Verlag, Berlin},
   date={1998},
   pages={xii+148},
}


\bib{Gon}{article}{
   author={Gonz\'{a}lez-Sprinberg, Gerardo},
   title={L'obstruction locale d'Euler et le th\'{e}or\`eme de MacPherson},
   language={French},
   conference={
      title={The Euler-Poincar\'{e} characteristic (French)},
   },
   book={
      series={Ast\'{e}risque},
      volume={82},
      publisher={Soc. Math. France, Paris},
   },
   date={1981},
   pages={7--32},
}

\bib{GH}{article}{
   author={Griffiths, Phillip},
   author={Harris, Joseph},
   title={Algebraic geometry and local differential geometry},
   journal={Ann. Sci. \'{E}cole Norm. Sup. (4)},
   volume={12},
   date={1979},
   number={3},
   pages={355--452},
}
\bib{GM}{article}{
   author={Goresky, Mark},
   author={MacPherson, Robert},
   title={Intersection homology theory},
   journal={Topology},
   volume={19},
   date={1980},
   number={2},
   pages={135--162},
}

\bib{GM1}{article}{
   author={Goresky, Mark},
   author={MacPherson, Robert},
   title={Intersection homology. II},
   journal={Invent. Math.},
   volume={72},
   date={1983},
   number={1},
   pages={77--129},
}

\bib{Harris}{book}{
   author={Harris, Joe},
   title={Algebraic geometry},
   series={Graduate Texts in Mathematics},
   volume={133},
   note={A first course},
   publisher={Springer-Verlag, New York},
   date={1992},
   pages={xx+328},
}

\bib{H}{article}{
   author={Huh, June},
   title={Positivity of Chern classes of Schubert cells and varieties},
   journal={J. Algebraic Geom.},
   volume={25},
   date={2016},
   number={1},
   pages={177--199},
}

\bib{HTT}{book}{
   author={Hotta, Ryoshi},
   author={Takeuchi, Kiyoshi},
   author={Tanisaki, Toshiyuki},
   title={$D$-modules, perverse sheaves, and representation theory},
   series={Progress in Mathematics},
   volume={236},
   note={Translated from the 1995 Japanese edition by Takeuchi},
   publisher={Birkh\"{a}user Boston, Inc., Boston, MA},
   date={2008},
   pages={xii+407},
}

\bib{IMN}{article}{
   author={Ikeda, Takeshi},
   author={Mihalcea, Leonardo C.},
   author={Naruse, Hiroshi},
   title={Factorial $P$- and $Q$-Schur functions represent equivariant
   quantum Schubert classes},
   journal={Osaka J. Math.},
   volume={53},
   date={2016},
   number={3},
   pages={591--619},
}


\bib{Benjamin}{article}{
   author={Jones, Benjamin F.},
   title={Singular Chern classes of Schubert varieties via small resolution},
   journal={Int. Math. Res. Not. IMRN},
   date={2010},
   number={8},
   pages={1371--1416},
}

\bib{K}{book}{
   author={Kashiwara, Masaki},
   title={Systems of microdifferential equations},
   series={Progress in Mathematics},
   volume={34},
   note={Based on lecture notes by Teresa Monteiro Fernandes translated from
   the French;
   With an introduction by Jean-Luc Brylinski},
   publisher={Birkh\"{a}user Boston, Inc., Boston, MA},
   date={1983},
   pages={xv+159},
}

 \bib{L}{article}{
   author={Larson, Scott},
   title={Decompositions of Schubert varieties and small resolutions},
   journal={arXiv:1911.02691},
   date={2019},
}

\bib{M}{article}{
   author={MacPherson, R. D.},
   title={Chern classes for singular algebraic varieties},
   journal={Ann. of Math. (2)},
   volume={100},
   date={1974},
   pages={423--432},
}

\bib{MS}{article}{
 	author={Leonardo C. Mihalcea and Rahul Singh},
      title={Mather classes and conormal spaces of Schubert varieties in cominuscule spaces}, 
          journal={preprint, arXiv:2006.04842},
          date={2020},

}

\bib{Park}{article}{
   author={Park, Kyeong-Dong},
   title={Deformation rigidity of odd Lagrangian Grassmannians},
   journal={J. Korean Math. Soc.},
   volume={53},
   date={2016},
   number={3},
   pages={489--501},

}
\bib{Per}{article}{
   author={Perrin, Nicolas},
   title={Small resolutions of minuscule Schubert varieties},
   journal={Compos. Math.},
   volume={143},
   date={2007},
   number={5},
   pages={1255--1312},
}

\bib{P1}{article}{
   author={Pragacz, Piotr},
   title={Enumerative geometry of degeneracy loci},
   journal={Ann. Sci. \'{E}cole Norm. Sup. (4)},
   volume={21},
   date={1988},
   number={3},
   pages={413--454},
}

\bib{P2}{article}{
   author={Pragacz, Piotr},
   title={Algebro-geometric applications of Schur $S$- and $Q$-polynomials},
   conference={
      title={Topics in invariant theory},
      address={Paris},
      date={1989/1990},
   },
   book={
      series={Lecture Notes in Math.},
      volume={1478},
      publisher={Springer, Berlin},
   },
   date={1991},
   pages={130--191},
}

\bib{PR}{article}{
   author={Pragacz, P.},
   author={Ratajski, J.},
   title={Formulas for Lagrangian and orthogonal degeneracy loci;
   $\widetilde{Q}$-polynomial approach},
   journal={Compositio Math.},
   volume={107},
   date={1997},
   number={1},
   pages={11--87},
}

\bib{RP}{article}{
   author={Richard Rim\'{a}nyi and Sutipoj Protampan},
   title={Characteristic classes of symmetric and
skew-symmetric degeneracy loci},
   journal={to appear in Facets of Algebraic Geometry: A Volume in Honor of William Fulton's 80th birthday, Cambridge Univ. Press LMS Lecture Note Series},
   date={2022},
}

\bib{RSW}{article}{
   author={Richmond, Edward},
   author={Slofstra, William},
   author={Woo, Alexander},
   title={The Nash blow-up of a cominuscule Schubert variety},
   journal={J. Algebra},
   volume={559},
   date={2020},
   pages={580--600},
}

\bib{SV94}{article}{
   author={Sankaran, Parameswaran},
   author={Vanchinathan, P.},
   title={Small resolutions of Schubert varieties in symplectic and
   orthogonal Grassmannians},
   journal={Publ. Res. Inst. Math. Sci.},
   volume={30},
   date={1994},
   number={3},
   pages={443--458},
}

\bib{SV}{article}{
   author={Sankaran, Parameswaran},
   author={Vanchinathan, P.},
   title={Small resolutions of Schubert varieties and Kazhdan-Lusztig
   polynomials},
   journal={Publ. Res. Inst. Math. Sci.},
   volume={31},
   date={1995},
   number={3},
   pages={465--480},
}

\bib{Sta89}{article}{
   author={Stanley, Richard P.},
   title={Log-concave and unimodal sequences in algebra, combinatorics, and
   geometry},
   conference={
      title={Graph theory and its applications: East and West},
      address={Jinan},
      date={1986},
   },
   book={
      series={Ann. New York Acad. Sci.},
      volume={576},
      publisher={New York Acad. Sci., New York},
   },
   date={1989},
   pages={500--535},

}

\bib{T}{article}{
   author={Totaro, Burt},
   title={Chern numbers for singular varieties and elliptic homology},
   journal={Ann. of Math. (2)},
   volume={151},
   date={2000},
   number={2},
   pages={757--791},

}

\bib{Z}{article}{
   author={Zelevinsky, A. V.},
   title={Small resolutions of singularities of Schubert varieties},
   language={Russian},
   journal={Funktsional. Anal. i Prilozhen.},
   volume={17},
   date={1983},
   number={2},
   pages={75--77},
}

\bib{Zhang}{article}{
   author={Zhang, Xiping},
   title={Chern classes and characteristic cycles of determinantal
   varieties},
   journal={J. Algebra},
   volume={497},
   date={2018},
   pages={55--91},

}

\end{biblist}
\end{bibdiv}

\end{document}